\newcommand{\squeeze}{}
\newcommand{\algname}[1]{{\sf#1}\xspace}
\DeclarePairedDelimiter\ceil{\lceil}{\rceil}
\DeclarePairedDelimiter\floor{\lfloor}{\rfloor}
\newtheorem{theorem}{Theorem}
\newtheorem{lemma}{Lemma}
\newcommand{\proj}{\mathrm{proj}}
\newcommand{\range}{\mathrm{range}}
\newcommand{\Span}{\mathrm{Span}}
\newcommand{\lmax}{\lambda_{\max}}
\newcommand{\lmin}{\lambda_{\min}}
\newcommand{\lminp}{\lambda_{\min}^+}
\newcommand{\g}{\nabla}
\newcommand{\ones}{\mathbf{1}}
\newcommand{\bg}{\mathrm{D}}
\newcommand{\R}{\mathbb{R}}
\def\<#1,#2>{\langle #1,#2\rangle}
\newcommand{\norm}[1]{\|#1\|}
\newcommand{\sqn}[1]{\norm{#1}^2}
\newcommand{\vect}[1]{\begin{bmatrix*}[c]#1\end{bmatrix*}}
\newcommand{\sE}{\mathsf{E}}
\newcommand{\cH}{\mathcal{H}}
\newcommand{\cN}{\mathcal{N}}
\newcommand{\cG}{\mathcal{G}}
\newcommand{\cV}{\mathcal{V}}
\newcommand{\cE}{\mathcal{E}}
\newcommand{\cL}{\mathcal{L}}
\newcommand{\cO}{\mathcal{O}}
\newcommand{\mW}{\mathbf{W}}
\newcommand{\mL}{\mathbf{L}}
\newcommand{\mI}{\mathbf{I}}
\newcommand{\mP}{\mathbf{P}}
\newcommand{\eqdef}{\coloneqq}
\newcommand{\z}{\hat{z}}
\newcommand{\x}{\hat{x}}
\title{Lower Bounds and Optimal Algorithms for Smooth and Strongly Convex Decentralized Optimization Over Time-Varying Networks}
\author{%
  Dmitry Kovalev\\
  KAUST\thanks{King Abdullah University of Science and Technology, Thuwal, Saudi Arabia}\\
   \And
   Elnur Gasanov \\
   KAUST\\
   \And
   Peter Richt\'{a}rik\\
   KAUST\\
   \And
   Alexander Gasnikov\\
   MIPT\thanks{Moscow Institute of Physics and Technology, Dolgoprudny, Russia}
}
\begin{document}

\maketitle

\begin{abstract}
	We consider the task of minimizing the sum of smooth and strongly convex functions stored in a decentralized manner across the nodes of a communication network whose links are allowed to change in time. We solve two fundamental problems for this task. First, we establish {\em the first lower bounds} on the number of decentralized communication rounds and the number of local computations required to find an $\epsilon$-accurate solution. Second, we design two {\em optimal algorithms} that attain these lower bounds: (i) a variant of the recently proposed algorithm \algname{ADOM} \citep{kovalev2021adom} enhanced via a multi-consensus subroutine, which is optimal in the case when access to the dual gradients is assumed, and (ii) a novel algorithm, called \algname{ADOM+}, which is optimal in the case when access to the primal gradients is assumed. We corroborate the theoretical efficiency of these algorithms by performing an experimental comparison with existing state-of-the-art methods.	
%	 In the case, when an access to the dual gradients is assumed, we show that recently proposed algorithm ADOM \citep{kovalev2021adom} with multi-consensus subroutine is optimal. In the case, when an access to the primal gradients is assumed, we propose a new algorithm
\end{abstract}

\section{Introduction}
In this work we are solving the decentralized optimization problem
\begin{equation}\label{eq:main}
\squeeze \min\limits_{x\in\R^d} \sum\limits_{i =1}^n f_i(x),
\end{equation}
where each function $f_i \colon \R^d \rightarrow \R^d$ is stored on a compute node $i  \in \{1,\ldots, n\}$. We assume that the nodes are connected through a communication network. Each node can perform local computations based on its local state and data, and can directly communicate with its neighbors only. Further, we assume the functions $f_i$ to be smooth and strongly convex. 

Such decentralized optimization problems arise in many applications, including estimation by sensor networks \citep{rabbat2004distributed}, network resource allocation \citep{beck20141}, cooperative control \citep{giselsson2013accelerated}, distributed spectrum sensing \citep{bazerque2009distributed} and power system control \citep{gan2012optimal}. Moreover, problems of this form draw attention of the machine learning community \citep{scaman2017optimal}, since they cover training of supervised machine learning models through empirical risk minimization from the data stored across the nodes of  a network as a special case. Finally, while the current federated learning \citep{konevcny2016federated,mcmahan2017communication} systems rely on a star network topology, with a trusted server performing aggregation and coordination placed at the center of the network, advances in decentralized optimization could be useful in new-generation federated learning formulations that would rely on fully decentralized computation \citep{li2020federated}.

%        Further, we assume the functions $f_i$ to be smooth and strongly convex.

%\paragraph{Time-invariant Networks.} In the case when network is fixed in time, there are lower bounds on the communication and gradient computation complexity of solving the problem \eqref{eq:main} established by \citet{scaman2017optimal}. These bounds are tight as there are matching upper bounds both in the case when the primal gradients $\g f_i(x)$ are available \citep{kovalev2020optimal}, and also in the case when the dual gradients $\g f_i^*(y)$ are available \citep{scaman2017optimal}, where $f_i^*(y)$ is the Fenchel conjugate of $f_i(x)$.
 
 %When the network is fixed in time, lower communication and gradient computation complexity bounds have been established by \citep{scaman2017optimal}.
 %there are lower bounds on the number of communication rounds and the number of gradient computations required to find an inexact solution of the problem \eqref{eq:main} established by \citet{scaman2017optimal}.
 
 \subsection{Time-varying Networks}

In this work, we focus on the practically highly relevant and theoretically challenging situation when the {\em links in the communication network are allowed to change over time}. Such {\em time-varying networks} \citep{zadeh1961time,kolar2010estimating} are ubiquitous in many complex systems and practical applications. In sensor networks, for example, changes in the link structure occur when the sensors are in motion, and due to other disturbances in the wireless signal connecting pairs of nodes. We envisage that a similar regime will be supported in future-generation federated learning systems \citep{konevcny2016federated,mcmahan2017communication}, where the communication pattern among pairs of mobile devices or mobile devices and edge servers will be dictated by their physical proximity, which naturally changes over time.

\subsection{Contributions}

In this work we present the following key contributions:
\begin{enumerate}
	\item {\bf Lower bounds.} We establish the {\em first lower bounds} on decentralized communication and local computation complexities for solving problem \eqref{eq:main} over {\em time-varying networks}. Our results are summarized in Table~\ref{tbl:complexities}, and detailed in Section~\ref{sec:lower} (see Theorems~\ref{thm:lower_comm} and~\ref{thm:lower_comp} therein).
	\item {\bf Optimal algorithms.} Further, we prove that {\em these bounds  are tight} by providing {\em two new optimal algorithms}\footnote{As a byproduct of our lower bounds, we show that the recently proposed method \algname{Acc-GT} of \citet{li2021accelerated} is also optimal. This method appeared on arXiv in April 2021, at the time when we already had a first draft of this paper, including all results. Their work does not offer any lower bounds.} which match these lower bounds: \begin{itemize}
	\item [(i)] a variant of the recently proposed algorithm \algname{ADOM} \citep{kovalev2021adom} enhanced via a multi-consensus subroutine, and 
	\item [(ii)] a novel algorithm, called \algname{ADOM+} (Algorithm~\ref{scary:alg}), also featuring multi-consensus. 
\end{itemize}	
	The former method is optimal in the case when access to the dual gradients is assumed, and the latter one is optimal in the case when access to the primal gradients is assumed. See Sections~\ref{sec:primal} and \ref{sec:optimal} for details. To the best of our knowledge, \algname{ADOM} with multi-consensus is the first dual based optimal decentralized algorithm for time-varying networks

%	$\cO\left( \kappa^{1/2} \log \frac{1}{\epsilon}\right)$
	
	\item {\bf Experiments.} Through illustrative numerical experiments (see Section~\ref{sec:experiments}, and the extra experiments contained in the appendix) we demonstrate that our methods are {\em implementable}, and that they {\em perform competitively} when compared to existing baseline methods \algname{APM-C} \citep{rogozin2020towards, li2018sharp} and  \algname{Acc-GT} \citep{li2021accelerated}.

\end{enumerate}

\begin{table}[t]
	\caption{Current theoretical state-of-the-art methods for solving problem \eqref{eq:main} over time-varying networks, and our contributions:  lower bounds (first lower bounds for this problem), and two new optimal algorithms, \algname{ADOM} and \algname{ADOM+} with multi-consensus. }\label{tbl:complexities}	\centering
	\footnotesize
	\begin{tabular}{cccc}
		\hline
		\bf Algorithm & \makecell{\bf Local computation\\\bf  complexity }& \makecell{\bf Decentralized communication \\ \bf complexity} & \makecell{\bf Gradient \\ \bf oracle}\\
%		\hline
%		\makecell{DIGing\\ \citep{nedic2017achieving}}& $\cO\left(n^{1/2}\chi^2\kappa^{3/2} \log\frac{1}{\epsilon}\right)$&&\\
		\hline
		\multicolumn{4}{c}{\bf Known Results}\\
		\hline
		\makecell{\algname{APM-C}\\\citep{rogozin2020towards}}&
		$\cO\left(\kappa^{1/2} \log\frac{1}{\epsilon}\right)$&
		$\cO\left(\chi\kappa^{1/2} \log^{\color{blue}2}\frac{1}{\epsilon}\right)$&
		primal\\
		\hline
		\makecell{\algname{Mudag} \\ \citep{ye2020multi}}&
		$\cO\left(\kappa^{1/2} \log\frac{1}{\epsilon}\right)$&
		$\cO\left(\chi\kappa^{1/2} {\color{blue} \log(\kappa)}\log\frac{1}{\epsilon}\right)$&
		primal\\
		\hline
		\makecell{\algname{Acc-GT} with multi-consensus \\ \citep{li2021accelerated}}&
		$\cO\left(\kappa^{1/2} \log\frac{1}{\epsilon}\right)$&
		$\cO\left(\chi\kappa^{1/2}\log\frac{1}{\epsilon}\right)$&
		primal\\
		\hline
		\multicolumn{4}{c}{\bf Our Results}\\
		\hline
		\makecell{\algname{ADOM} with multi-consensus\\ This Paper, Theorem~\ref{thm:adom}}&
		$\cO\left( \kappa^{1/2} \log \frac{1}{\epsilon}\right)$&
		$\cO\left( \chi\kappa^{1/2}\log \frac{1}{\epsilon}\right)$&
		dual\\
		\hline
		\makecell{\algname{ADOM+} with multi-consensus\\ This Paper, Theorem~\ref{thm:adom+}}&
		$\cO\left( \kappa^{1/2} \log \frac{1}{\epsilon}\right)$&
		$\cO\left( \chi\kappa^{1/2}\log \frac{1}{\epsilon}\right)$&
		primal\\
		\hline
		\hline
		\makecell{\bf Lower Bounds \\ \bf This Paper, Theorems~\ref{thm:lower_comm} and~\ref{thm:lower_comp}}
		& $\cO\left( \kappa^{1/2} \log \frac{1}{\epsilon}\right)$&
		$\cO\left( \chi\kappa^{1/2}\log \frac{1}{\epsilon}\right)$&
		both
		\\
		\hline
	\end{tabular}
\end{table}

{\bf Related Work.} When the communication network is fixed in time, decentralized distributed optimization in  the strongly convex and smooth regime is relatively well studied. In particular, \citet{scaman2017optimal} established lower decentralized communication and local computation complexities for solving this problem, and proposed an optimal algorithm called \algname{MSDA} in the case when an access to the dual oracle (gradient of the Fenchel transform of the objective function) is assumed. Under a primal oracle (gradient of the objective function), current state of the art includes the near-optimal algorithms \algname{APM-C} \citep{li2018sharp, dvinskikh2019decentralized} and \algname{Mudag} \citep{ye2020multi}, and a recently proposed optimal algorithm \algname{OPAPC} \citep{kovalev2020optimal}.

The situation is worse in the time-varying case. To the best of our knowledge, no lower decentralized communication complexity bound exists for this problem. There are a few linearly-convergent  algorithms, such as those of \citet{nedic2017achieving} and \algname{Push-Pull} Gradient Method of \citet{pu2020push}, that assume a primal oracle,  and the dual oracle based algorithm \algname{PANDA} due to \citet{maros2018panda}. These algorithms have complicated theoretical analyses, which results in slow convergence rates. There are also several accelerated algorithms, which were originally developed for the fixed network case, and can be extended to the time-varying case. These include \algname{Acc-DNGD} \citep{qu2019accelerated}, \algname{Mudag} \citep{ye2020multi} and a variant of \algname{APM-C} which was extended to the time-varying case by \citet{rogozin2020towards}. Finally, there are two algorithms with state-of-the-art decentralized communication complexity: a dual based algorithm \algname{ADOM} \citep{kovalev2021adom}, and a primal based algorithm \algname{Acc-GT} \citep{li2021accelerated}.

\section{Notation and Assumptions}

\subsection{Smooth and Strongly Convex Regime}
Throughout this paper we restrict each function $f_i(x)$ to be $L$-smooth and $\mu$-strongly convex. That is, we require the following inequalities to hold for all $x,y\in \R^d$ and $i \in \{1,\ldots,n\}$:
\begin{align}
\squeeze f_i(y) + \<\g f_i(y),x-y> + \frac{\mu}{2}\sqn{x-y}	\leq f_i(x)  \leq f_i(y) + \<\g f_i(y),x-y> + \frac{L}{2}\sqn{x-y}.
\end{align}
This naturally leads to the quantity
$
	\kappa = \frac{L}{\mu} $
known as the condition number of function $f_i$.
Strong convexity implies that  problem \eqref{eq:main} has a unique solution.
%, which we denote $x^*$. 

\subsection{Primal and Dual Oracle}
In our work we consider two types of gradient oracles. By {\em primal oracle} we denote the situation when the gradients $\g f_i$ of the objective functions $f_i$ are available. By {\em dual oracle} we denote the situation when the gradients $\g f_i^*$ of the Fenchel conjugates\footnote{ Recall that the Fenchel conjugate of $f_i$ is given as $f_i^*(x) = \sup_{y \in \R^d} [\<x,y>  - f_i(y)]$. Note, that $f_i^*$ is $\nicefrac{1}{\mu}$-smooth and $\nicefrac{1}{L}$-strongly convex \citep{rockafellar2015convex}.} $f_i^*$ of the objective functions $f_i$ are available.

\subsection{Decentralized Communication}\label{sec:graph}
Let $\cV=\{1,\ldots, n\}$ denote the set of the compute nodes. We assume that decentralized communication is split into communication rounds. At each round $q \in \{0,1,2,\ldots\}$,  nodes  are connected through a communication network represented by a graph $\cG^q = (\cV, \cE^q)$, where $\cE^q \subset \{(i,j) \in \cV\times \cV : i\neq j\}$ is the set of links at round $q$.  For each node $i \in \cV$ we consider  a set of its immediate neighbors at round $q$: $\cN_i^q = \{j \in \cV: (i,j) \in \cE^q\}$. At round $q$, each node $i \in \cV$ can communicate with nodes from the set $\cN_i^q$ only. This type of communication is known in the literature as decentralized communication.

\subsection{Gossip Matrices} \label{sec:gossip}

Decentralized communication between nodes is typically represented via a matrix-vector multiplication with a gossip matrix. For each decentralized communication round $q \in \{0,1,2,\ldots\}$, consider a matrix $\mW(q) \in \R^{n\times n}$ with the following properties:
\begin{enumerate}
	\item $\mW(q)_{i,j} \neq 0$  if and only if $(i,j) \in \cE^q$ or $i=j$,
	%\item $\mW(q)  \ones_n= \zeros_n$, $\ones_n^\top \mW(q) = \zeros_n^\top$, where $\ones_n = (1,\ldots,1) \in \R^n$, $\zeros_n = (0,\ldots,0) \in \R^n$,
	\item $\ker \mW(q) \supset  \left\{ (x_1,\ldots,x_n) \in \R^n : x_1 = \ldots = x_n \right\}$,
	\item $\range \mW(q) \subset \left\{(x_1,\ldots,x_n) \in\R^n : \sum_{i=1}^n x_i = 0\right\}$,
	\item There exists $\chi \geq 1$, such that 
	\begin{equation}\label{eq:chi}
		\sqn{\mW x - x} \leq (1-\chi^{-1})\sqn{x} \text{ for all } x  \in \left\{ (x_1,\ldots,x_n) \in \R^n :\sum_{i =1}^n x_i = 0\right\}.
	\end{equation}	
\end{enumerate}

Throughout this paper we will refer to the matrix $\mW(q)$ as the {\em gossip matrix}. A typical example of a gossip matrix is $\mW(q) = \lmax^{-1}(\mL(q)) \cdot \mL(q)$, where $\mL(q)$ the Laplacian of an undirected connected graph $\cG^q$. Note that in this case $\mW(q)$ is symmetric and positive semi-definite, and $\chi$ can be chosen as an upper-bound on the condition number of the matrix $\mL(q)$ defined by $\chi = \sup_{q} \frac{\lmax(\mL(q))}{\lminp(\mL(q))}$, where $\lmax(\mL(q))$ and $\lminp(\mL(q))$ denote the largest and the smallest positive eigenvalue of $\mL(q)$, respectively. With a slight abuse of language, we will call $\chi$ the {\em condition number of time-varying network} even when the gossip matrices $\mW(q)$ are not necessarily symmetric.

\section{Lower Complexity Bounds}\label{sec:lower}

In this section we obtain lower bounds on the decentralized communication and local computation complexities for solving  problem \eqref{eq:main}. These lower bounds apply to algorithms which belong to a certain class, which we call {\em First-order Decentralized Algorithms}. Algorithms from this class need to satisfy the following assumptions:
\begin{enumerate}
	\item Each compute node can calculate first-order characteristics, such as gradient of the function it stores, or its Fenchel conjugate.
	\item Each compute node can communicate values, such as vectors from $\R^d$, with its neighbors. Note that the set of neighbors for each node is not fixed in time.
\end{enumerate}
We repeat here that when the network is fixed in time,  lower decentralized communication and local computation complexity bounds were obtained by \citet{scaman2017optimal}.

\subsection{First-order Decentralized Algorithms}\label{sec:alg_class}

Here we give a formal definition of the class of algorithms for which we provide lower bounds on the  complexity of solving problem \eqref{eq:main}.  At each time step $k \in \{0,1,2,\ldots\}$, each node $i \in \cV$ maintains a finite local memory $\cH_i(k) \subset \R^d$.  For simplicity, we assume that the local memory is initialized as $\cH_i(k) = \{0\}$.

At each time step $k$, an algorithm  either performs a decentralized communication round, or a local computation round that can update the memory $\cH_i(k)$.  The update of the local memory satisfies the following rules:
\begin{enumerate}
	\item 
	If an algorithm performs a local computation round at time step $k$, then
	\begin{equation*}
	\cH_i(k+1) \subset \Span(\{x, \g f_i(x), \g f_i^*(x) : x \in \cH_i(k)\})
	\end{equation*}
	for all $i \in \cV$, where $f_i^*$ is the Fenchel conjugate of $f_i$.
	\item
	If an algorithm performs a decentralized communication round at time step $k$, then
	\begin{equation*}
	\cH_i(k+1) \subset \Span\left(\cup_{j \in \cN_i^{q} \cup \{i\}} \cH_j(k)\right)
	\end{equation*}
	for all $i \in \cV$, where $q \in \{0,1,2,\ldots\}$ refers to the number of  decentralized communication round performed so far, counting from the first one, for which we set $q=0$. See Figure~\ref{fig:iterations} for a graphical illustration of our time step ($k$) and decentralized communication ($q$) counters.
\end{enumerate}
At time step $k$, each node $i \in \cV$ must specify an output value $x_i^o(k) \in \cH_i(k)$. 

\begin{figure}
	\centering
\begin{tikzpicture}
\def\array{{0,1,1,0,0,1,1,1,0}};
\edef\q{0}
\draw[black, thick,->] (-0.3,0) -- (4.7,0);
\draw (4.7,0) node[anchor=north] {$k$};

\draw[black, thick,->] (-0.3,0.5) -- (4.7,0.5);
\draw (4.7,0.5) node[anchor=north] {$q$};

\foreach \i in {0,...,8}
{
	\draw[black, thick] (\i / 2,-0.07) -- (\i/2,0.07);
	\draw (\i / 2,0)node[anchor=north] {$\i$};
	\pgfmathsetmacro{\x}{\array[\i]}
	\draw (\i / 2,1) node {\ifthenelse{\x=1}{\ding{51}}{\ding{55}}};
	\draw (\i / 2,1.5) node {\ifthenelse{\x=1}{\ding{55}}{\ding{51}}};
	
	\draw[black, thick] (\i / 2,-0.07+0.5) -- (\i/2,0.07+0.5);
	\ifthenelse{\x=1}{\draw (\i / 2,0.5)node[anchor=north] {$\q$};}{};
	
	\pgfmathparse{int(\q+\array[\i])}
	\xdef\q{\pgfmathresult}
}

\draw(-0.3,0) node[anchor=east]{time step $k$};
\draw(-0.3,0.5) node[anchor=east]{communication round number $q$};
\draw(-0.3,1) node[anchor=east]{decentralized communication};
\draw(-0.3,1.5) node[anchor=east]{local computation};
\end{tikzpicture}
\caption{The way we count  decentralized communication rounds ($q$)  and iterations ($k$).}\label{fig:iterations}
\end{figure}
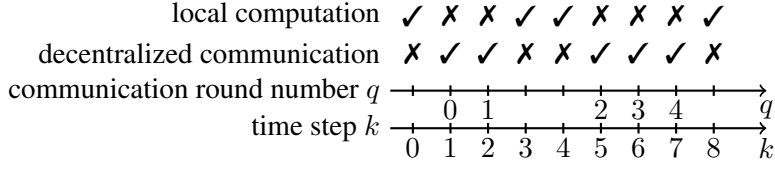

In the case of time-invariant networks, \citet{scaman2017optimal} presented lower complexity bounds for a class of algorithms called {\em Black-box Optimization Procedures}. This class is slightly more general than our  First-order Decentralized Algorithms. 
%In particular, we do not allow communications and local computations to be  performed in parallel and asynchronously. 
In particular, we use discrete time $k \in \{0,1,2,\ldots\}$ rather than continuous time, and do not allow decentralized communications and local computations to be  performed in parallel and asynchronously.
We have done this for the sake of simplicity and clarity. This should not be seen as a weakness of our work, since our results can be easily extended to Black-box Optimization Procedures. However, we are of the opinion that such an extension would not give any new substantial  insights.

\subsection{Main Theorems}

We are now ready to present our main theorems describing the first lower bounds on the number of decentralized communication rounds and local computation rounds that are necessary to find an approximate solution of the problem \eqref{eq:main}.
In order to simplify the proofs, and following the approach used by \citet{nesterov2003introductory}, we consider the limiting case $d \rightarrow +\infty$. More precisely, we will work in the infinite dimensional space 
$
\squeeze \ell_2 = \left\{x = (x_{[l]})_{l=1}^\infty  : \sum_{l=1}^\infty (x_{[l]})^2 < +\infty\right\}
$
rather than $\R^d$.

\begin{theorem}\label{lower:thm:comm}
	Let $\chi \geq 3$, $L > \mu \geq 0$. There exists a sequence of graphs $\{\cG^q\}_{q=0}^\infty$, which satisfy assumptions from Section~\ref{sec:graph},  a corresponding sequence of gossip matrices $\{\mW(q)\}_{q=0}^\infty \subset \R^{n\times n}$ satisfying assumptions from Section~\ref{sec:gossip}, and $\mu$-strongly convex and $L$-smooth functions $f_i: \ell_2 \rightarrow \R, i \in \cV$, such that for any first-order decentralized algorithm and any $k \in \{0,1,2\ldots\}$
	\begin{equation}
	\squeeze	\sqn{x_i^o(k) - x^*} \geq C \left(\max\left\{0,1 - \frac{24\sqrt{6\mu}}{\sqrt{L}} \right\}\right)^{q/\chi},
	\end{equation}
	where $q$ is the number of decentralized communication rounds performed by the algorithm before time step $k$, $x^*$ is the solution of the problem \eqref{eq:main} and $C > 0$ is some constant.
\end{theorem}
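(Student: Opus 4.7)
The plan is to adapt the two classical ingredients used in lower bounds for decentralized optimization over a \emph{fixed} network (as in the Scaman et al. construction) to the time-varying setting: (i) a Nesterov-style worst-case strongly convex quadratic that is split among the compute nodes so that the optimal solution has a specific sequential coordinate structure, and (ii) a linear-chain-like graph on which information propagates slowly. The novelty is that in the time-varying setting we gain one extra degree of freedom, namely the freedom to choose a whole \emph{sequence} of gossip matrices $\{\mW(q)\}_{q\geq 0}$ satisfying the condition~\eqref{eq:chi} with parameter $\chi$, and our task is to use this freedom to slow down information propagation by a factor of $\chi$ per communication round compared to the instantaneous-diameter bound.

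First I would set up the function family. Pick two ``anchor'' nodes $i_1,i_2\in\cV$ and define, on the infinite-dimensional space $\ell_2$, two quadratics $f_{i_1},f_{i_2}$ whose Hessians are the odd/even parts of the tridiagonal operator used in Nesterov's worst function for $\mu$-strongly convex, $L$-smooth optimization, and set $f_i(x)=\tfrac{\mu}{2}\sqn{x}$ on every other node. Normalize so that $\sum_i f_i$ is $\mu$-strongly convex and $L$-smooth, and so that the unique minimizer $x^*$ has coordinates $x^*_{[\ell]} = \rho^{\ell}$ with $\rho=(\sqrt{\kappa}-1)/(\sqrt{\kappa}+1)$. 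The standard span-argument shows that whenever the local memory $\cH_i(k)$ is a subspace of coordinates $\{e_1,\dots,e_{m_i(k)}\}$, one has $\sqn{x_i^o(k)-x^*}\geq C\,\rho^{2m_i(k)}$ for a universal constant $C$. The only way a new coordinate index can appear at node $i$ is either via a local computation at $i_1$ or $i_2$ (which raises $m_{i_1}$ or $m_{i_2}$ by one) or via a communication round that transports such a coordinate along a path of active edges between $i_1$ and $i_2$ through node $i$.

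Next I would construct the graph sequence and the gossip matrices. Take the base graph to be a path on $n$ nodes with $i_1,i_2$ at the two ends, and build the gossip matrix at each round by weighting its Laplacian so that the condition number in~\eqref{eq:chi} equals exactly $\chi$; equivalently, one may cycle through $\chi$ different sparse modifications of the path, each touching only a $1/\chi$-fraction of the edges, and verify that each matrix in the cycle satisfies~\eqref{eq:chi}. The crucial combinatorial lemma (analogous to Lemma 3 in Scaman et al.) is that after $q$ communication rounds the ``coordinate frontier'' of any node $i$ has moved by at most $\lceil q/\chi\rceil$ hops, so $m_i(k)\leq 2\lceil q/\chi\rceil + (\text{local-gradient calls used})$. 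In particular, for any node $i$ that is at distance $\Omega(n)$ from both anchors, $m_i(k)\leq 2q/\chi+O(1)$ before a communication round spanning the chain occurs. Substituting into the coordinate lower bound yields
\begin{equation*}
\sqn{x_i^o(k)-x^*}\;\geq\; C\,\rho^{4q/\chi}\;=\;C\Bigl(1-\tfrac{2}{\sqrt{\kappa}+1}\Bigr)^{4q/\chi}\;\geq\;C\Bigl(\max\bigl\{0,\,1-\tfrac{24\sqrt{6\mu}}{\sqrt{L}}\bigr\}\bigr)^{q/\chi},
\end{equation*}
after absorbing constants. The main obstacle will be the second step: producing an explicit sequence of gossip matrices that is sparse enough to limit propagation to $1/\chi$ edges per round while still satisfying the contraction~\eqref{eq:chi} with the prescribed $\chi$. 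I expect this to boil down to checking a spectral inequality for a weighted or time-multiplexed path Laplacian, together with a careful induction over $q$ tracking which coordinates can appear in each $\cH_i(k)$.
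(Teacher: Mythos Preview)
Your high-level plan is sound: split Nesterov's worst-case quadratic between two groups and force information to propagate slowly. The genuine gap is in the graph construction. Neither of your two suggested options delivers the required $\Omega(\chi)$ communication rounds per coordinate increment while keeping each gossip matrix admissible.

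For the first option---a single weighted path whose normalized Laplacian has condition number $\chi$---note that the path on $n$ nodes has $\lmax/\lminp = \Theta(n^2)$, regardless of reweighting. Matching this to $\chi$ forces $n \approx \sqrt{\chi}$, and since the sparsity pattern of $\mW(q)$ is fixed by $\cE^q$, information still advances one hop per round and traverses the path in $\Theta(\sqrt{\chi})$ rounds. That reproduces the fixed-network bound of Scaman et al., not the $\chi$ bound claimed here. For the second option---cycling through ``sparse modifications of the path, each touching only a $1/\chi$-fraction of the edges''---any such subgraph on $n \geq 3$ nodes is disconnected, so the corresponding matrix cannot satisfy the contraction~\eqref{eq:chi} on all of $\cL^\perp$. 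You identify this as the main obstacle, and indeed it is fatal for the path-based route.

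The construction that works is different: take $n \approx \chi$ nodes and let each $\cG^q$ be a \emph{star} graph, with the center cycling through a middle group $\cV_2$ of size $\approx \chi/3$. The star Laplacian has condition number exactly $n$ (linear in $n$, not quadratic), so each individual gossip matrix satisfies~\eqref{eq:chi} with the target $\chi$ while remaining connected. Meanwhile, because leaves communicate only with the current center and the center changes every round, information from one outer group $\cV_1$ reaches the other outer group $\cV_3$ only after the center has cycled through all of $\cV_2$, i.e., after $\Theta(\chi)$ rounds. Your combinatorial lemma tracking $m_i(k)$ then goes through with the bound $s_i(k) \leq 2\lfloor q / |\cV_2|\rfloor + O(1)$, and the rest of your argument (the coordinate lower bound $\sqn{x_i^o(k)-x^*} \geq C\rho^{2s_i(k)}$ and the final Bernoulli-type simplification) is fine. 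The split of the function should accordingly be onto the two \emph{groups} $\cV_1$ and $\cV_3$, not just two anchor nodes.
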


Our proof is inspired by the proof of \citet{scaman2017optimal} for time-invariant networks, which in its turn is based on the proof of oracle complexities for strongly convex and smooth optimization by \citet{nesterov2003introductory}. Here we give a short outline of our proof.

We choose  $n = |\cV| \approx \chi$ nodes and split them into three disjoint sets, $\cV_1, \cV_2$ and $\cV_3$, of equal size $\nicefrac{n}{3}$. Further, we split the function used by \citet{nesterov2003introductory} onto the nodes belonging to $\cV_1$ and $\cV_3$. One then needs to show that most dimensions of the output vectors $x_i^o(k)$ will remain zero, while local computations may only increase the number of non-zero dimensions by one. In contrast to \citep{scaman2017optimal}, we need to have $|\cV_2| \approx \chi$ rather than $|\cV_2| \approx \sqrt{\chi}$, and still ensure that at least $|\cV_2|$ decentralized communication rounds are necessary to share information between node groups $\cV_1$ and $\cV_3$. We achieve this by choosing a sequence of star graphs with the center node cycling through the nodes from $\cV_2$. 
\begin{theorem}\label{thm:lower_comm}
	For any $\chi \geq 3$, $L > \mu \geq 0$ there exists a sequence of  graphs $\{\cG^q\}_{q=0}^\infty$ satisfying assumptions from Section~\ref{sec:graph},  a corresponding sequence of gossip matrices $\{\mW(q)\}_{q=0}^\infty \subset \R^{n\times n}$ satisfying assumptions from Section~\ref{sec:gossip}, and $\mu$-strongly convex and $L$-smooth functions $f_i: \ell_2 \rightarrow \R, i \in \cV$, such that for any first order decentralized algorithm, the number of decentralized communication rounds to find an $\epsilon$-accurate solution of the problem \eqref{eq:main} is lower bounded by
	\begin{equation}
	\squeeze	\Omega \left( \chi\sqrt{\nicefrac{L}{\mu}}\log \frac{1}{\epsilon}\right).
	\end{equation}
\end{theorem}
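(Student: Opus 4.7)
The proof plan is to obtain Theorem~\ref{thm:lower_comm} as an immediate corollary of Theorem~\ref{lower:thm:comm} by inverting the geometric decay rate. Given the hard family of instances supplied by Theorem~\ref{lower:thm:comm}, any first-order decentralized algorithm satisfies
$$\sqn{x_i^o(k) - x^*} \geq C \rho^{q/\chi}, \qquad \rho := \max\Bigl\{0, 1 - \tfrac{24\sqrt{6\mu}}{\sqrt{L}}\Bigr\},$$
after $q$ communication rounds. Therefore, to guarantee $\sqn{x_i^o(k) - x^*} \leq \epsilon$ we must have $C \rho^{q/\chi} \leq \epsilon$, which upon taking logarithms gives the explicit bound $q \geq \chi \log(C/\epsilon) / \log(1/\rho)$.

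The main step is to relate $\log(1/\rho)$ to $\sqrt{\mu/L}$. I would restrict attention to the regime where $L/\mu$ is sufficiently large (say $L/\mu \geq 3456$) so that $a := 24\sqrt{6\mu/L} \leq 1/2$, and apply the elementary inequality $\log(1/(1-a)) \leq 2a$, valid on $[0, 1/2]$. This yields
$$q \geq \frac{\chi \log(C/\epsilon)}{48\sqrt{6}}\,\sqrt{\tfrac{L}{\mu}} \;=\; \Omega\Bigl(\chi \sqrt{L/\mu}\,\log(1/\epsilon)\Bigr),$$
as required. The complementary regime of bounded condition number is absorbed into the hidden constants, since there $\sqrt{L/\mu} = \Theta(1)$ and the target bound degenerates to $\Omega(\chi \log(1/\epsilon))$, which is already implied by the bound evaluated at the boundary $L/\mu = 3456$.

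I do not anticipate any real obstacle in this deduction itself: all of the difficulty is buried inside Theorem~\ref{lower:thm:comm}, whose construction (splitting the Nesterov worst-case function across $\cV_1$ and $\cV_3$, and cycling the center of a star graph through the intermediate group $\cV_2$ to force $\Theta(\chi)$ communication rounds per unit of information propagated) is the real content and has been sketched in the text preceding the statement. The only care required here is to verify the logarithmic inequality in the chosen range of $a$; the rest is arithmetic.
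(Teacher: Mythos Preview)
Your proposal is correct and matches the paper's treatment: the paper does not supply a separate proof of Theorem~\ref{thm:lower_comm} at all, leaving it as an immediate corollary of Theorem~\ref{lower:thm:comm}, and what you have written is precisely the standard inversion of the geometric rate that fills that gap. The one point worth noting is that your handling of the bounded-$\kappa$ regime is a bit informal---Theorem~\ref{lower:thm:comm} gives a vacuous bound when $24\sqrt{6\mu/L}\geq 1$, so strictly speaking one needs either to interpret the $\Omega(\cdot)$ as asymptotic in $\epsilon$ with $L,\mu,\chi$ fixed (so constants may depend on $L/\mu$), or to invoke a separate easy argument for small $\kappa$; the paper is equally silent on this, so your treatment is no looser than theirs.
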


%This communication complexity is proportional to $\chi$ in contrast to $\sqrt{\chi}$ in the time-invariant case. It is natural since the complexity of solving a decentralized consensus problem on fixed network is $\cO(\sqrt{\chi}\log 1/\epsilon)$ and is achieved by accelerated consensus procedure based on Chebyshev acceleration \citep{arioli2014chebyshev}. 

We also provide a lower bound on the local computation complexity. The proof is similar to the proof of Theorem~\ref{lower:thm:comm}.

\begin{theorem}\label{thm:lower_comp}
	For any $\chi \geq 3$, $L > \mu \geq 0$ there exists a sequence of  graphs $\{\cG^q\}_{q=0}^\infty$ satisfying assumptions from Section~\ref{sec:graph},  a corresponding sequence of gossip matrices $\{\mW(q)\}_{q=0}^\infty \subset \R^{n\times n}$ satisfying assumptions from Section~\ref{sec:gossip}, and $\mu$-strongly convex and $L$-smooth functions $f_i: \ell_2 \rightarrow \R, i \in \cV$, such that for any first order decentralized algorithm, the number of local computation rounds to find an $\epsilon$-accurate solution of the problem \eqref{eq:main} is lower bounded by
	\begin{equation}
\squeeze	\Omega \left( \sqrt{\nicefrac{L}{\mu}}\log \frac{1}{\epsilon}\right).
	\end{equation}
\end{theorem}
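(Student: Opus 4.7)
The plan is to mimic the proof of Theorem~\ref{lower:thm:comm}, but to drop the factor of $\chi$ since we are now counting only local computation rounds: in particular, we are free to assume that an arbitrary number of communication rounds takes place between any two consecutive local computation rounds, since communication rounds do not contribute to the bound. I would therefore reuse the exact same construction as in the proof of Theorem~\ref{lower:thm:comm}: partition $\cV$ into three groups $\cV_1,\cV_2,\cV_3$, place two ``halves'' of a Nesterov-style worst-case quadratic on distinguished nodes of $\cV_1$ and $\cV_3$, set the remaining $f_i$ equal to $\tfrac{\mu}{2}\sqn{x}$, and use the same sequence of star graphs with centers rotating through $\cV_2$ so that the gossip-matrix assumptions of Section~\ref{sec:gossip} are satisfied with condition number $\chi \geq 3$.

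Next I would introduce a ``progress'' quantity $m_i(k) \in \mathbb{N}$ defined as the smallest integer such that $\cH_i(k) \subset \Span\{e_1,\ldots,e_{m_i(k)}\}$ in the ambient $\ell_2$. The key invariants are (i) a communication round at time $k$ gives $m_i(k+1) \leq \max_{j \in \cN_i^q \cup \{i\}} m_j(k)$, so communications merely propagate already-activated coordinates and never create new ones, and (ii) a local computation round gives $m_i(k+1) \leq m_i(k) + 1$ for every $i$. Invariant (ii) relies on the fact that both $\g f_i$ and $\g f_i^*$ map $\Span\{e_1,\ldots,e_m\}$ into $\Span\{e_1,\ldots,e_{m+1}\}$; for the primal gradient this follows from the tridiagonal structure of the Nesterov quadratic, and for the dual gradient I would use the block-splitting trick of \citet{scaman2017optimal}: write the Nesterov matrix as a sum of two block-diagonal matrices with $2 \times 2$ blocks supported on odd/even index pairs, and assign the two pieces to the distinguished nodes of $\cV_1$ and $\cV_3$ so that each local Hessian (and hence its inverse) preserves the same banded span structure.

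Combining these invariants, after $k$ local computation rounds (and any number of interspersed communication rounds) every $m_i(k)$ is bounded by $k$, so each output $x_i^o(k)$ lies in $\Span\{e_1,\ldots,e_k\}$. I would finish by invoking the classical Nesterov lower bound \citep{nesterov2003introductory}: on the constructed quadratic, any vector in $\Span\{e_1,\ldots,e_k\}$ satisfies $\sqn{x - x^*} \geq C\left(\tfrac{\sqrt{\kappa}-1}{\sqrt{\kappa}+1}\right)^{2k}$ for a constant $C>0$ depending on $\norm{x^*}$, and requiring this to be at most $\epsilon$ yields the claimed $k = \Omega(\sqrt{L/\mu}\log(1/\epsilon))$.

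The main obstacle, as in \citet{scaman2017optimal}, is verifying the ``one new coordinate per gradient'' property for the \emph{dual} oracle, since the inverse of a tridiagonal matrix is dense in general; the block-diagonal splitting of the Nesterov quadratic described above is precisely what makes both the primal and the dual invariants compatible, and porting this splitting to the time-varying construction used for Theorem~\ref{lower:thm:comm} is routine since the role of $\cV_2$ and the star-graph sequence is only to enforce decentralization, and plays no part in the coordinate-activation argument.
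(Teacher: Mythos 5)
Your proposal is correct and takes essentially the same approach the paper implies when it says the proof of Theorem~\ref{thm:lower_comp} is ``similar to the proof of Theorem~\ref{lower:thm:comm}'': you reuse the same construction \eqref{lower:f}, drop the parity/relay bookkeeping of Lemma~\ref{lower:lem:comm} (which was only needed to extract the $\chi$ factor in the communication bound), and rely on the simpler invariants that a local computation round advances $\max_i m_i$ by at most one while a communication round cannot increase it at all, with the dual-oracle case handled by the $2\times 2$ block-diagonal splitting already built into \eqref{lower:f}. The only minor imprecision is quoting the contraction ratio as $(\sqrt{\kappa}-1)/(\sqrt{\kappa}+1)$; for the lifted quadratic in \eqref{lower:f} it is actually the $\rho$ of \eqref{lower:rho}, but since $\log(1/\rho)=\Theta(\sqrt{\mu/L})$ the conclusion $\Omega(\sqrt{L/\mu}\log(1/\epsilon))$ is unchanged.
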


The detailed proofs are available in the appendix.

\section{Primal Algorithm with Optimal Communication Complexity} \label{sec:primal}

%In our work we describe two optimal algorithms: a primal algorithm, which calculates primal gradients $\g f_i(x)$, and a dual algorithm, which computes dual gradients $\g f_i^*(x)$ of the Fenchel conjugate $f_i^*$ of the function $f_i$. In this section we develop a novel primal algorithm with optimal communication complexity, which we use later to develop an optimal primal algorithm.

In this section we develop a novel algorithm for decentralized optimization over time-varying networks with optimal decentralized communication complexity. This is a primal algorithm, meaning that it uses the primal gradient oracle. The design of this algorithm relies on a sequence of specific reformulations of the problem \eqref{eq:main}, which we now  describe.

\subsection{Reformulation via Lifting}
Consider a function $F \colon (\R^d)^\cV \rightarrow \R$ defined by
\begin{equation}
\squeeze	F(x) = \sum \limits_{i \in \cV} f_i(x_i),
\end{equation}
where $x = (x_1,\ldots,x_n) \in (\R^d)^\cV$. This function is $L$-smooth and $\mu$-strongly convex since the individual functions $f_i$ are. Consider also the so called consensus space $\cL \subset(\R^d)^\cV$ defined by 
\begin{equation}\label{eq:L}
	\cL = \{(x_1,\ldots,x_n) \in (\R^d)^\cV: x_1 = \ldots = x_n\}.
\end{equation}
Using this notation, we arrive at the equivalent formulation
of problem \eqref{eq:main}
\begin{equation}\label{eq:primal}
	\min_{x \in \cL} F(x).
\end{equation}
Due to strong convexity, this reformulation has a unique solution, which we denote as $x^* \in \cL$.

\subsection{Saddle Point Reformulation}

Next, we introduce a reformulation of problem \eqref{eq:primal} using a parameter $\nu \in (0,\mu)$ and a slack variable $w \in (\R^d)^\cV$:
\begin{equation*}
\squeeze	\min\limits_{\substack{x, w \in (\R^d)^\cV\\w=x, w \in \cL}} F(x) - \frac{\nu}{2}\sqn{x} + \frac{\nu}{2}\sqn{w}.
\end{equation*}
Note that the function $F(x) -\frac{\nu}{2}\sqn{x}$ is $(\mu - \nu)$-strongly convex since $\nu < \mu$. The latter problem is a minimization problem with linear constraints. Hence, it has the equivalent saddle-point reformulation
\begin{equation*}
\squeeze	\min \limits_{x,w \in (\R^d)^\cV}\max\limits_{y\in (\R^d)^\cV}\max\limits_{z \in \cL^\perp} F(x) - \frac{\nu}{2}\sqn{x} + \frac{\nu}{2}\sqn{w} + \<y,w-x> + \<z,w>,
\end{equation*}
where $\cL^\perp \subset  (\R^d)^\cV$ is an orthogonal complement to the space $\cL$, defined by
\begin{equation}\label{eq:Lperp}
	\cL^\perp = \left\{(z_1\ldots,z_n) \in (\R^d)^\cV : {\sum_{i =1}^n} z_i = 0\right\}.
\end{equation}
Minimization in $w$ gives the final saddle-point reformulation of the problem \eqref{eq:primal}:
\begin{equation}\label{eq:saddle}
\squeeze	\min \limits_{x\in (\R^d)^\cV}\max\limits_{y\in (\R^d)^\cV}\max\limits_{z \in \cL^\perp} F(x) - \frac{\nu}{2}\sqn{x}  - \<y,x>  - \frac{1}{2\nu}\sqn{y+z}.
\end{equation}
Further, by $\sE$ we denote the Euclidean space $\sE = (\R^d)^\cV\times (\R^d)^\cV \times \cL^\perp$.
One can show that the saddle-point problem \eqref{eq:saddle} has a unique solution $(x^*,y^*,z^*) \in \sE$, which satisfies the following optimality conditions:
\begin{align}
0&= \g F(x^*)  - \nu x^* - y^*,\label{opt:x}\\
0&= \nu^{-1}(y^* + z^*) + x^*,\label{opt:y}\\
\cL &\ni y^* + z^*.\label{opt:z}
\end{align}

\subsection{Monotone Inclusion Reformulation}
 Consider two monotone operators $A,B \colon \sE \rightarrow \sE$, defined via
\begin{equation}\label{eq:AB}
\squeeze  A(x,y,z) = \vect{\g F(x) - \nu x\\ 
\squeeze \nu^{-1}(y+z)\\\mP\nu^{-1}(y+z)},
\squeeze  \qquad B(x,y,z) = \vect{-y\\x\\0},
\end{equation}
where $\mP$ is an orthogonal projection matrix onto the subspace $\cL^\perp$. Matrix $\mP$ is given as \begin{equation}
\mP = (\mI_n - \tfrac{1}{n}\ones_n\ones_n^\top) \otimes \mI_d,
\end{equation}
where $\mI_p$ denotes $p\times p$ identity matrix, $\ones_n = (1,\ldots,1) \in \R^n$, and $\otimes$ is the Kronecker product.  Then, solving problem \eqref{eq:saddle} is equivalent to finding $(x^*,y^*, z^*) \in \sE$, such that
\begin{equation}\label{eq:AB0}
	A(x^*,y^*,z^*) + B(x^*,y^*,z^*) = 0.
\end{equation}
Indeed, optimality condition \eqref{opt:z} is equivalent to $\proj_{\cL^\perp}(y^* + z^*) = 0$ or $\mP\nu^{-1}(y^* + z^*)  = 0$. Now, it is clear that \eqref{eq:AB0} is just another way to write the optimality conditions for  problem \eqref{eq:saddle}.

\subsection{Primal Algorithm Design and Convergence}
A common approach to solving  problem \eqref{eq:AB0} is to use the \algname{Forward-Backward} algorithm. The update rule of this algorithm is $
	(x^+,y^+,z^+) = J_{\omega B}[(x,y,z) - \omega A(x,y,z)],
$
where $\omega >0$ is a stepsize, the operator $J_{\omega B}\colon \sE \rightarrow \sE$ is called the resolvent, and is defined as the inverse of the  operator $(I + \omega B)\colon \sE \rightarrow \sE$, where $I \colon \sE \rightarrow \sE$ is the identity mapping. One can observe that the resolvent  $J_{\omega B}$ is easy to compute.

Following \citep{kovalev2020optimal}, we use an accelerated version of  the \algname{Forward-Backward} algorithm. This can indeed be done, since the operator $A\colon \sE \rightarrow \sE$ is the gradient of the smooth and convex function $(x,y,z) \mapsto F(x)  - \frac{\nu}{2}\sqn{x}+ \frac{1}{2\nu}\sqn{y+z}$ on the Euclidean space $\sE$.  Note that the operator $A+B$ is {\em not} strongly monotone, while strong monotonicity is usually required to achieve linear convergence. However, we can still obtain a linear convergence rate by carefully utilizing the $L$-smoothness property of the function $F(x)$. Similar issue appeared  in the design of algorithms for solving linearly-constrained minimizations problems, including the non-accelerated algorithms \citep{condat2019proximal, salim2020dualize} and the optimal algorithm \citep{salim2021optimal}. However, the authors of these works considered a different  problem reformulation from \eqref{eq:saddle}, and hence their results can not be applied here.

However, one issue still remains: to compute the operator $A$, it is necessary to perform a matrix-vector multiplication with the matrix  $\mP$, which requires full averaging, i.e, consensus over all nodes of the network. Following the approach of \citet{kovalev2021adom}, we replace it with the multiplication via the gossip matrix $\mW(q) \otimes \mI_d$, which requires one decentralized communication round only. That is, we replace the last component  $\mP\nu^{-1}(y+z)$ of the operator $A$ defined by \eqref{eq:AB} with $(\mW(q) \otimes \mI_d)\nu^{-1}(y+z)$. \citet{kovalev2021adom} showed that multiplication with the gossip matrix can be seen as a compression on the Euclidean space $\cL^\perp$, i.e., condition \eqref{eq:chi} holds, and hence the so-called error-feedback mechanism \citep{stich2019error,karimireddy2019error,gorbunov2020linearly} can be applied to obtain linear convergence.  We use this insight in the design of our algorithm.

Armed with all these ideas, we are ready to present our method \algname{ADOM+}; see Algorithm~\ref{scary:alg}.
\begin{algorithm}[H]
	\caption{\algname{ADOM+}}
	\label{scary:alg}
	\begin{algorithmic}[1]
		\State {\bf input:} $x^0, y^0,m^0 \in (\R^d)^\cV$, $z^0 \in \cL^\perp$
		\State $x_f^0 = x^0$, $y_f^0 = y^0$, $z_f^0 = z^0$
		\For{$k = 0,1,2,\ldots$}
		\State $x_g^k = \tau_1x^k + (1-\tau_1) x_f^k$\label{scary:line:x:1}
		\State $x^{k+1} = x^k + \eta\alpha(x_g^k - x^{k+1}) - \eta\left[\g F(x_g^k) - \nu x_g^k - y^{k+1}\right] $\label{scary:line:x:2}
		\State $x_f^{k+1} = x_g^k + \tau_2 (x^{k+1} - x^k)$\label{scary:line:x:3}
		
		\State $y_g^k = \sigma_1 y^k + (1-\sigma_1)y_f^k$\label{scary:line:y:1}
		\State $y^{k+1} = y^k + \theta\beta (\g F(x_g^k) - \nu x_g^k - y^{k+1}) -\theta\left[\nu^{-1}(y_g^k + z_g^k) + x^{k+1}\right]$\label{scary:line:y:2}
		\State $y_f^{k+1} = y_g^k + \sigma_2 (y^{k+1} - y^k)$\label{scary:line:y:3}
		
		\State $z_g^k = \sigma_1 z^k + (1-\sigma_1)z_f^k$\label{scary:line:z:1}
		\State $z^{k+1} = z^k + \gamma \delta(z_g^k - z^k) - (\mW(k)\otimes \mI_d)\left[\gamma\nu^{-1}(y_g^k+z_g^k) + m^k\right]$\label{scary:line:z:2}
		\State $m^{k+1} = \gamma\nu^{-1}(y_g^k+z_g^k) + m^k - (\mW(k)\otimes \mI_d)\left[\gamma\nu^{-1}(y_g^k+z_g^k) + m^k\right]$\label{scary:line:m}
		\State $z_f^{k+1} = z_g^k - \zeta (\mW(k)\otimes \mI_d)(y_g^k + z_g^k)$\label{scary:line:z:3}
		\EndFor
	\end{algorithmic}
\end{algorithm}

We now establish the convergence rate of Algorithm~\ref{scary:alg}.

\begin{theorem}[Convergence of \algname{ADOM+}]\label{thm:adom+_conv}
	To reach precision $\sqn{x^k - x^*} \leq \epsilon$, Algorithm~\ref{scary:alg} requires  the following number of iterations
	\begin{equation*}
	\squeeze	\cO \left( \chi\sqrt{\nicefrac{L}{\mu}}\log \frac{1}{\epsilon}\right).
	\end{equation*}
\end{theorem}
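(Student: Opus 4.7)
The plan is to follow the standard template for analyzing accelerated primal--dual methods of the \algname{ADOM}/\algname{OPAPC} type: construct a Lyapunov function of the iterates and show it contracts by a factor of $1-\Omega(1/(\chi\sqrt{\kappa}))$ per iteration, from which the stated complexity follows by taking logarithms. Concretely, I would define
\begin{equation*}
\Psi^k \;=\; A^k\sqn{x^k - x^*} + B^k\bigl(F(x_f^k) - F(x^*) - \<\g F(x^*),x_f^k-x^*>\bigr) + C^k\sqn{y^k - y^*} + D^k\sqn{z^k - z^*} + E^k\sqnw{m^k - m^*},
\end{equation*}
with coefficients $A^k,\ldots,E^k$ to be tuned in terms of the algorithm's stepsizes $\tau_1,\tau_2,\sigma_1,\sigma_2,\alpha,\beta,\delta,\eta,\theta,\gamma,\zeta$ and the parameter $\nu\in(0,\mu)$. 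Here $m^*$ is the equilibrium of the error-feedback recursion, derived from the optimality conditions \eqref{opt:x}--\eqref{opt:z}, and the $\mWp$-weighted norm accounts for the compressed gossip step as in \citet{kovalev2021adom}.

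The first step is to derive one-step descent inequalities for each block of variables. For the $x$-block (lines \ref{scary:line:x:1}--\ref{scary:line:x:3}) I would use the classical Nesterov-style telescoping identity: the combination $x_g^k = \tau_1 x^k + (1-\tau_1)x_f^k$ together with $x_f^{k+1} = x_g^k + \tau_2(x^{k+1}-x^k)$ is designed so that $L$-smoothness and $(\mu-\nu)$-strong convexity of $F(\cdot)-\tfrac{\nu}{2}\sqn{\cdot}$ yield a contraction on the function-value gap, provided $\tau_1,\tau_2$ and the implicit stepsize $\eta$ are balanced at the accelerated scale $\tau_1\sim \sqrt{\mu/L}$. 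For the $y$-block (lines \ref{scary:line:y:1}--\ref{scary:line:y:3}) a symmetric argument works because the partial function $-\tfrac{1}{2\nu}\sqn{y+z}$ is smooth and concave; the cross-terms $\<y^{k+1},x^{k+1}-x^*>$ and $\<x^{k+1},y^{k+1}-y^*>$ arising from the bilinear part $\<y,x>$ in \eqref{eq:saddle} cancel by design. For the $z$-block and the error-feedback variable $m^k$ (lines \ref{scary:line:z:1}--\ref{scary:line:m}), I would invoke the key property \eqref{eq:chi}: since $y_g^k + z_g^k \in \cL^\perp$ at optimality (by \eqref{opt:z}), the residual $(y_g^k+z_g^k)-(y^*+z^*)$ lies in $\cL^\perp$, so $\mW(k)$ acts as a $(1-\chi^{-1})$-contraction on this subspace, and the error-feedback identity for $m^k$ absorbs the gap between $(\mW(k)\otimes\mI_d)$ and the true projection $\mP$.

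Combining these inequalities with appropriately chosen positive weights, I would verify that the coefficients can be made satisfy the invariants $A^{k+1}\le(1-\rho)A^k$, etc., with $\rho = \Theta(1/(\chi\sqrt{\kappa}))$ whenever the parameters scale as $\tau_1,\sigma_1 \sim 1/(\chi\sqrt{\kappa})$, $\eta,\theta \sim \sqrt{1/(L\mu)}$, $\nu \sim \mu$, and $\gamma,\zeta$ of order $\nu$. This yields $\Psi^{k+1}\le(1-\rho)\Psi^k$ and, since $\Psi^k$ dominates $\mu\sqn{x^k-x^*}$, unrolling gives $\sqn{x^k-x^*} \le \epsilon$ after $k = \cO(\chi\sqrt{L/\mu}\log(1/\epsilon))$ iterations.

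The main obstacle will be the parameter-tuning step: because the operator $A+B$ in \eqref{eq:AB0} is not strongly monotone, the contraction on $\sqn{y^k-y^*}+\sqn{z^k-z^*}$ must be borrowed from the $\mu$-strong convexity of $F$ via the optimality condition \eqref{opt:x}, which creates a coupling between the descent lemmas for the three blocks. Balancing this coupling against the $(1-\chi^{-1})$ contraction from the gossip step, without losing an extra $\sqrt{\chi}$ factor in the rate, is the delicate part; it is precisely the reason the algorithm splits each variable into three sequences and introduces the averaging parameters $\tau_1,\sigma_1$ at the scale $1/(\chi\sqrt{\kappa})$ rather than $1/\sqrt{\kappa}$. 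Once this balance is established the rest is routine bookkeeping, and the detailed computation is deferred to the appendix.
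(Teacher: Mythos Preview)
Your high-level strategy matches the paper's: a Lyapunov function that contracts by $1-\Theta(1/(\chi\sqrt{\kappa}))$, assembled from per-block descent lemmas, with the cross terms from the bilinear coupling $\<y,x>$ cancelling between the $x$- and $y$-lemmas. But the Lyapunov function you wrote down would not close. The $\mWp$-weighted norm for $m^k$ is ill-defined in the time-varying regime since there is no single $\mW$; the paper uses the time-invariant projector norm $\sqn{m^k}_{\mP}$ (with equilibrium $m^*=0$, not a nontrivial fixed point). More importantly, you cannot track $\sqn{z^k-z^*}$ directly, because the $z$-update mixes the gossip step with the error-feedback correction; the paper decouples them by introducing the shifted variable $\hat z^k = z^k - \mP m^k$ and tracking $\sqn{\hat z^k - z^*}$ instead. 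Finally, the three-sequence structure in $y,z$ forces the Lyapunov function to also carry $\sqn{y_f^k - y^*}$ and $\sqn{y_f^k + z_f^k - (y^*+z^*)}$; without them the one-step inequalities for the $y$- and $z$-blocks do not telescope.

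Your parameter scaling also needs adjustment: you first (correctly) set $\tau_1\sim\sqrt{\mu/L}$ but later claim $\tau_1\sim 1/(\chi\sqrt{\kappa})$. In the paper only $\sigma_1,\sigma_2$ carry the factor $\chi^{-1}$ (specifically $\sigma_2=\sqrt{\mu}/(16\chi\sqrt{L})$), while $\tau_1,\tau_2\sim 1/\sqrt{\kappa}$ independently of $\chi$; correspondingly $\theta = \nu/(4\sigma_2) \sim \chi\sqrt{\mu L}$ and $\gamma = \nu/(14\sigma_2\chi^2) \sim \sqrt{\mu L}/\chi$, not $1/\sqrt{L\mu}$ or order $\nu$. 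This asymmetry is exactly the mechanism you allude to in your last paragraph---the $x$-block contracts at the fast rate $1/\sqrt{\kappa}$ on its own, and only the $(y,z,m)$-block is slowed to $1/(\chi\sqrt{\kappa})$ by the gossip contraction~\eqref{eq:chi}---but getting these scales wrong is precisely what would cost you the extra $\sqrt{\chi}$ you were worried about.
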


Note that Algorithm~\ref{scary:alg} performs $\cO(1)$ decentralized communication and local computation rounds. Hence, its decentralized communication  and local computation complexities are $\cO \left( \chi\sqrt{\nicefrac{L}{\mu}}\log \frac{1}{\epsilon}\right)$.

\section{Optimal Algorithms} \label{sec:optimal}

In this section we develop decentralized algorithms for time-varying networks with optimal local computation and decentralized communication complexities. We develop both primal and dual algorithms that use primal and dual gradient oracle, respectively. The key mechanism to obtain optimal algorithms is to incorporate the multi-consensus procedure into the algorithms with optimal decentralized communication complexity.

\subsection{Multi-consensus Procedure}

As discussed in Section~\ref{sec:gossip}, a decentralized communication round cab be represented as the multiplication with the gossip matrix $\mW(q)$. The main idea behind the multi-consensus procedure is to replace the matrix $\mW(q)$ with another matrix, namely
\begin{equation}\label{eq:W_multi}
\squeeze	\mW(k; T) = \mI - \prod \limits_{q = kT}^{(k+1)T -1} (\mI - \mW(q)),
\end{equation}
where $k \in \{0,1,2,\ldots\}$ is the iteration counter, and $T \in \{1,2,\ldots\}$ is the number of consensus steps. One can show that this matrix satisfies the assumptions on the gossip matrix from Section~\ref{sec:gossip}, including the contraction property \eqref{eq:chi}:
\begin{equation}\label{eq:W_multi_contraction}
	\sqn{\mW(k;T) x - x} \leq (1-\chi^{-1})^T\sqn{x} \text{ for all } x \in \cL^\perp.
\end{equation}
One can also observe that multiplication with the matrix $\mW(k;T)$ requires to perform multiplication with $T$ gossip matrices $\mW(kT) , \mW(kT+1), \ldots, \mW(kT + T - 1)$, hence it requires $T$ decentralized communication rounds.

\subsection{Optimal Algorithms: ADOM and ADOM+ with multi-consensus}

Now, we are ready to describe our optimal algorithms for smooth and strongly decentralized optimization over time-varying networks. As mentioned before, we start with algorithms with optimal decentralized communication complexity. In the case when access to the primal oracle is assumed, \algname{ADOM+} (Algorithm~\ref{scary:alg}) is the algorithm of choice. In the case when access to the dual oracle is assumed, \citet{kovalev2021adom} proposed the dual based accelerated decentralized algorithm \algname{ADOM}. The original convergence proof of this algorithm requires the gossip matrix $\mW(k)$ to be symmetric. However, the generalization of this proof to the case when the gossip matrix satisfies assumptions from Section~\ref{sec:gossip}, and is not necessarily symmetric, is straightforward.

Both \algname{ADOM} and \algname{ADOM+} require $\cO\left(\chi\sqrt{\nicefrac{L}{\mu}}\log\frac{1}{\epsilon}\right)$ iterations to obtain an $\epsilon$-accurate solution of problem \eqref{eq:main}, and at each iteration they perform $\cO(1)$ local computation and decentralized communication rounds, i.e., multiplications with the gossip matrix $\mW(q)$. We now incorporate a multi-consensus procedure into both algorithms. That is, we replace matrix $\mW(q)$ with the matrix $\mW(k; T)$ defined in \eqref{eq:W_multi}, where $k \in \{0,1,2,\ldots\}$ is the iteration counter. As mentioned before, this is equivalent to performing $\cO(T)$ decentralized communication rounds at each iteration. Choosing the number of consensus steps $T = \ceil{\chi \ln2}$ together with \eqref{eq:W_multi_contraction} implies
\begin{equation}
\squeeze \sqn{\mW(k;T)x - x} \leq \frac{1}{2}\sqn{x}\text{ for all } x \in \cL^\perp.
\end{equation}
This means that $\mW(k;T)$ satisfies \eqref{eq:chi} with $\chi$ replaced by $\frac{1}{2}$ and hence both \algname{ADOM} and \algname{ADOM+} with multi-consensus require $\cO\left(\sqrt{\nicefrac{L}{\mu}}\log\frac{1}{\epsilon}\right)$ iterations to obtain an $\epsilon$-accurate solution.
Taking into account that these algorithms still perform $\cO(1)$ local computations and $\cO(T) = \cO(\chi)$ decentralized communication rounds at each iteration, we arrive at the following theorems.

\begin{theorem}\label{thm:adom}
	\algname{ADOM} with multi-consensus requires $\cO\left(\sqrt{\nicefrac{L}{\mu}}\log\frac{1}{\epsilon}\right)$ local computation rounds and $\cO\left(\chi\sqrt{\nicefrac{L}{\mu}}\log\frac{1}{\epsilon}\right)$  decentralized communication rounds to find an $\epsilon$-accurate solution of the distributed optimization  problem \eqref{eq:main}.
\end{theorem}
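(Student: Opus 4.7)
The proof is essentially an assembly of the pieces developed in the preceding subsection, so my plan is to verify each claim carefully rather than introduce new machinery. First, I would establish that the original convergence guarantee for \algname{ADOM} from \citet{kovalev2021adom}, giving an iteration count of $\cO(\chi\sqrt{L/\mu}\log(1/\epsilon))$, extends to the non-symmetric gossip matrices permitted by the assumptions of Section~\ref{sec:gossip}. The original proof uses symmetry only where it plays the role of (i) ensuring $\range\mW \subset \cL^\perp$ and $\ker\mW \supset \cL$, and (ii) supplying the contraction bound \eqref{eq:chi}. Both properties are postulated in Section~\ref{sec:gossip} as axioms independent of symmetry, so inspecting the ADOM Lyapunov argument and replacing each invocation of $\mW^\top=\mW$ by a direct appeal to properties 1--4 yields the generalization claimed in the paper.

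Second, I would verify the contraction bound \eqref{eq:W_multi_contraction} for the multi-consensus matrix $\mW(k;T)$ defined in \eqref{eq:W_multi}. By construction, for any $x \in \cL^\perp$,
\begin{equation*}
\mW(k;T)x - x = -\prod_{q=kT}^{(k+1)T-1}(\mI-\mW(q))\,x.
\end{equation*}
Property 3 of the gossip matrices implies that $\cL^\perp$ is invariant under each factor $\mI-\mW(q)$, and property 4 gives $\norm{(\mI-\mW(q))v}^2 \le (1-\chi^{-1})\norm{v}^2$ for all $v\in\cL^\perp$. Iterating this bound $T$ times yields \eqref{eq:W_multi_contraction}. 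Choosing $T = \lceil \chi \ln 2\rceil$ makes $(1-\chi^{-1})^T \le 1/2$, so $\mW(k;T)$ satisfies the gossip matrix condition \eqref{eq:chi} with effective condition number $\bar\chi = 2$.

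Third, I would substitute this $\bar\chi$ into the (generalized) ADOM convergence guarantee to obtain the iteration count $\cO(\bar\chi\sqrt{L/\mu}\log(1/\epsilon)) = \cO(\sqrt{L/\mu}\log(1/\epsilon))$ for reaching $\epsilon$-accuracy. Finally, I would account for per-iteration cost: each outer iteration of \algname{ADOM} with multi-consensus performs $\cO(1)$ dual oracle calls and a single multiplication with $\mW(k;T)$, which unfolds into $T = \cO(\chi)$ ordinary decentralized communication rounds using matrices $\mW(kT),\ldots,\mW((k+1)T-1)$. Multiplying the iteration count by the per-iteration cost yields $\cO(\sqrt{L/\mu}\log(1/\epsilon))$ local computation rounds and $\cO(\chi\sqrt{L/\mu}\log(1/\epsilon))$ decentralized communication rounds, as claimed.

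The main obstacle is the first step: rigorously extending the \algname{ADOM} analysis from the symmetric to the non-symmetric gossip matrix setting. The existing proof is a delicate potential-function argument, and any place where $\mW$ appears inside an inner product with its transpose, or where the pseudoinverse norm $\sqnw{\cdot}$ is used to control a cross term, needs to be re-examined to ensure the argument uses only the range inclusion, kernel inclusion, and contraction property. Everything after this generalization — the contraction for $\mW(k;T)$, the choice of $T$, and the final complexity accounting — is straightforward bookkeeping.
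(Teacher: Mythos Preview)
Your proposal is correct and follows essentially the same argument as the paper: the paper's proof of Theorem~\ref{thm:adom} is not a separate formal proof but the prose in Section~\ref{sec:optimal}, which (i) asserts that the \algname{ADOM} convergence analysis extends to non-symmetric gossip matrices satisfying the assumptions of Section~\ref{sec:gossip}, (ii) verifies that $\mW(k;T)$ with $T=\lceil \chi\ln 2\rceil$ has effective condition number $\cO(1)$, and (iii) reads off the per-iteration cost to obtain the stated totals. Your identification of the non-symmetric extension as the only substantive step, and of the remaining points as bookkeeping, matches the paper's treatment exactly.
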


\begin{theorem}\label{thm:adom+}
	\algname{ADOM+} with multi-consensus requires $\cO\left(\sqrt{\nicefrac{L}{\mu}}\log\frac{1}{\epsilon}\right)$ local computation rounds and $\cO\left(\chi\sqrt{\nicefrac{L}{\mu}}\log\frac{1}{\epsilon}\right)$  decentralized communication rounds to find an $\epsilon$-accurate solution of the distributed optimization  problem \eqref{eq:main}.
\end{theorem}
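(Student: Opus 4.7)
The plan is to reduce the theorem directly to the per-iteration convergence result of Theorem~\ref{thm:adom+_conv} by viewing the multi-consensus operator $\mW(k;T)$ defined in \eqref{eq:W_multi} as the effective gossip matrix driving \algname{ADOM+}. The first step is to verify that $\mW(k;T)$ satisfies all four assumptions from Section~\ref{sec:gossip}. The sparsity, kernel and range conditions follow because each factor $\mI - \mW(q)$ preserves both the consensus subspace $\cL$ and the orthogonal complement $\cL^\perp$, so their product does as well. The contraction property \eqref{eq:W_multi_contraction} follows by iterating \eqref{eq:chi}: for every $x \in \cL^\perp$,
\begin{equation*}
\sqn{\mW(k;T)x - x} = \sqN{\prod_{q=kT}^{(k+1)T-1}(\mI - \mW(q))\, x} \leq (1-\chi^{-1})^T \sqn{x}.
\end{equation*}

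Second, I would take $T = \lceil \chi \ln 2 \rceil$ and use the elementary bound $(1-\chi^{-1})^T \leq e^{-T/\chi} \leq \tfrac{1}{2}$ to conclude that $\mW(k;T)$ satisfies \eqref{eq:chi} with the condition number $\chi$ replaced by the absolute constant $2$. Since \algname{ADOM+} uses its gossip matrix only through the abstract properties 1--4 of Section~\ref{sec:gossip} (viewing multiplication as a compression on $\cL^\perp$ and relying on the error-feedback mechanism), Theorem~\ref{thm:adom+_conv} applied to \algname{ADOM+} driven by $\mW(k;T)$ yields iteration complexity
\begin{equation*}
\cO\!\left(\sqrt{\nicefrac{L}{\mu}}\log\tfrac{1}{\epsilon}\right),
\end{equation*}
the $\chi$ factor in the original bound having been absorbed into the constant.

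Finally, I would account for the per-iteration cost. Each iteration of \algname{ADOM+} performs $\cO(1)$ primal gradient evaluations and $\cO(1)$ multiplications by the effective gossip matrix, and each such multiplication unfolds into $T = \cO(\chi)$ multiplications with the actual per-round matrices $\mW(kT),\ldots,\mW(kT+T-1)$, i.e., $\cO(\chi)$ decentralized communication rounds. Multiplying these per-iteration counts by the iteration complexity yields $\cO(\sqrt{L/\mu}\log(1/\epsilon))$ local computations and $\cO(\chi\sqrt{L/\mu}\log(1/\epsilon))$ communication rounds, matching the claim.

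The main obstacle is strictly a verification issue: one must confirm that the proof of Theorem~\ref{thm:adom+_conv} never exploits any property of $\mW(q)$ beyond the four listed in Section~\ref{sec:gossip} (in particular, no symmetry, idempotency or specific spectral structure of a single-step matrix is used). Given the emphasis in Section~\ref{sec:primal} on treating the gossip multiplication abstractly as a compression together with error feedback, this check is routine, and the remainder of the argument is bookkeeping.
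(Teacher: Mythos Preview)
Your proposal is correct and follows essentially the same route as the paper: replace $\mW(q)$ by the multi-consensus operator $\mW(k;T)$ with $T=\lceil \chi\ln 2\rceil$, invoke \eqref{eq:W_multi_contraction} to obtain an effective condition number $\cO(1)$, apply Theorem~\ref{thm:adom+_conv} to get $\cO(\sqrt{L/\mu}\log(1/\epsilon))$ iterations, and then multiply by the per-iteration counts of $\cO(1)$ gradients and $\cO(\chi)$ communications. Your explicit remark that one must check the proof of Theorem~\ref{thm:adom+_conv} uses only properties~2--4 of Section~\ref{sec:gossip} (the sparsity property~1 is purely about implementability and indeed does not hold for $\mW(k;T)$ with respect to any single graph) is a useful clarification that the paper leaves implicit.
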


\subsection{Comparison with Acc-GT \citep{li2021accelerated}}
To the best of our knowledge \algname{ADOM} with multi-consensus is the first dual based optimal decentralized algorithm for time-varying networks. In the case when the primal oracle is assumed, besides \algname{ADOM+} with multi-consensus, there is only one previously existing algorithm, \algname{Acc-GT} with multi-consensus \citep{li2021accelerated}, which achieves optimal local computation and decentralized communication complexities. The main advantage of \algname{Acc-GT} over \algname{ADOM+} is that while its convergence theory supports so-called $\gamma$-connected networks, we do not consider this case in our work. However, in the case when multi-consensus is not used, its iteration complexity is $\cO\left(\chi^{3/2}\sqrt{\nicefrac{L}{\mu}}\log\frac{1}{\epsilon}\right)$, which is {\em worse} than the complexity of \algname{ADOM+}, by the factor $\chi^{1/2}$. Since communication is known to be the main bottleneck in distributed training systems, multi-consensus is unlikely to be used in practice, which may limit the practical performance of  \algname{Acc-GT} in comparison to \algname{ADOM+}.

\section{Experiments} \label{sec:experiments}

\begin{figure}[t]
	\includegraphics[width=0.24\textwidth]{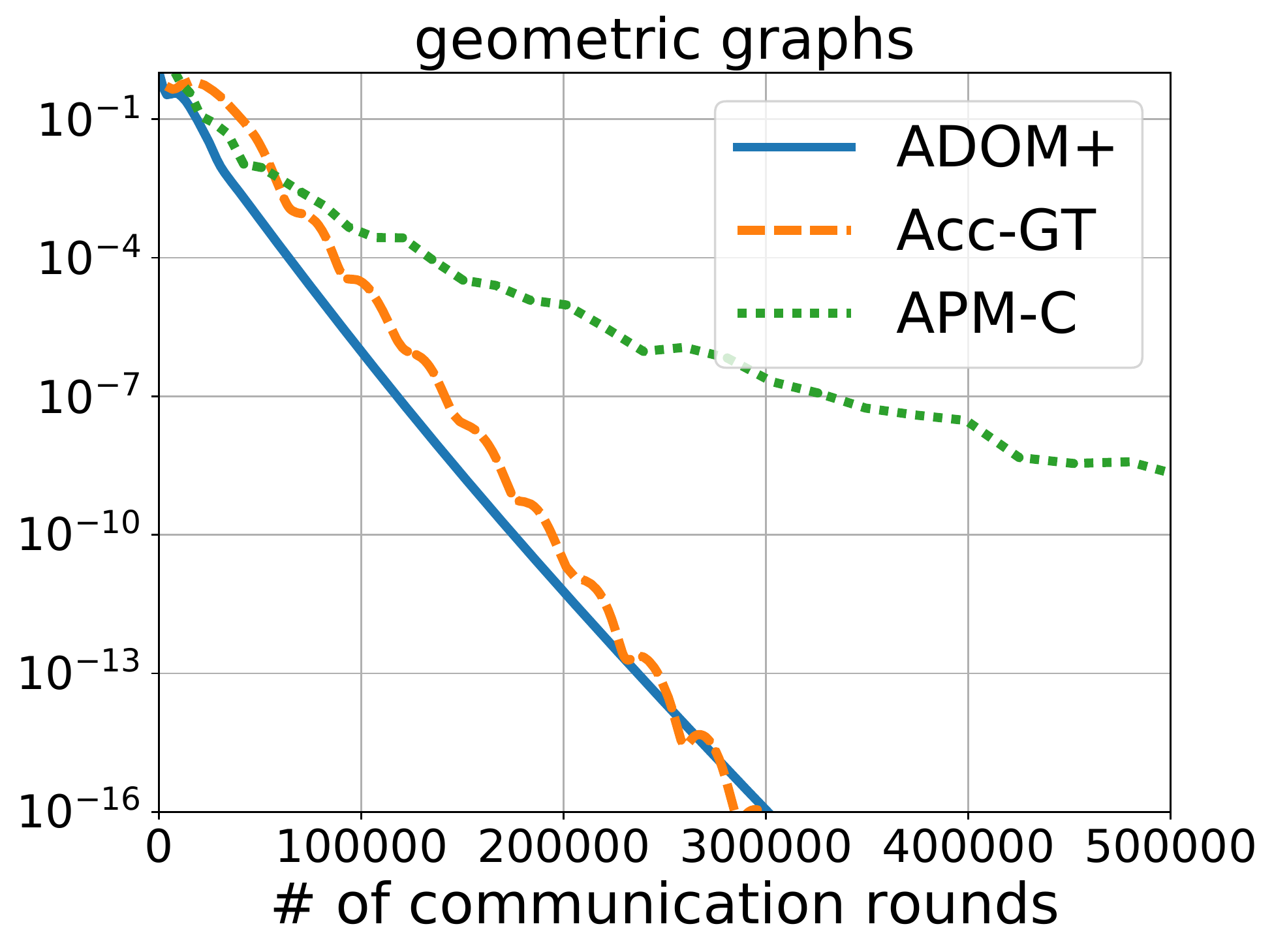}
	\includegraphics[width=0.24\textwidth]{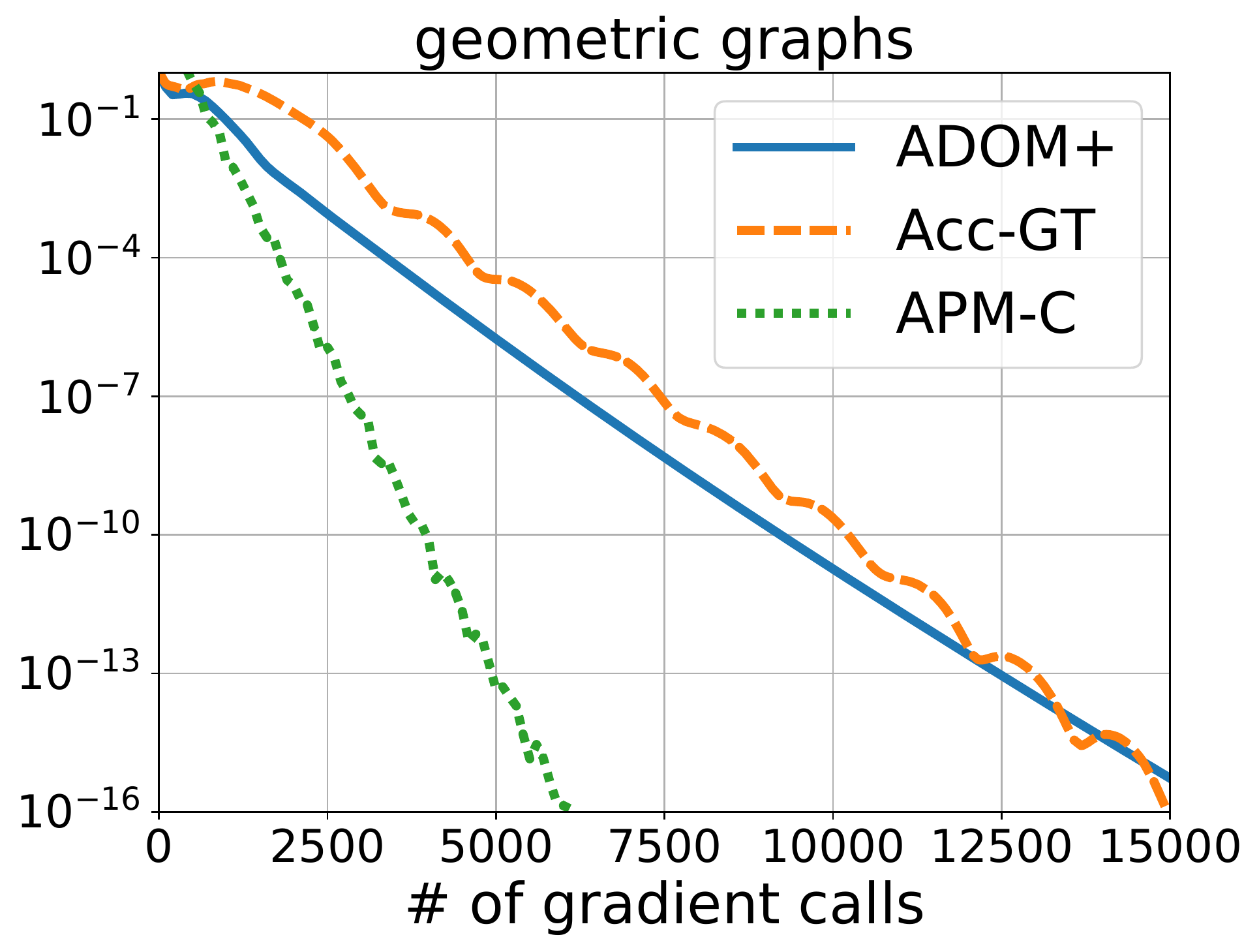}
	\includegraphics[width=0.24\textwidth]{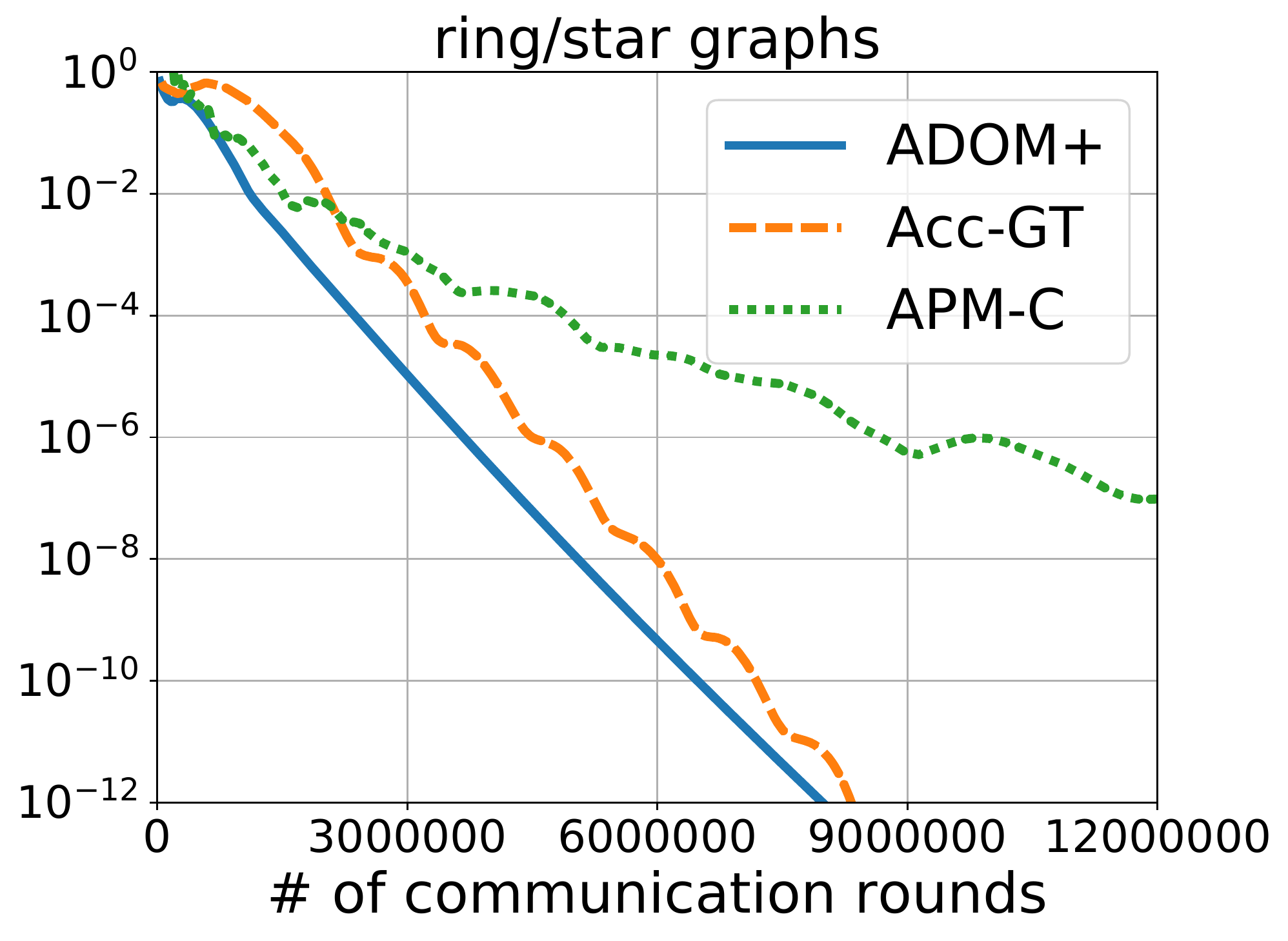}
	\includegraphics[width=0.24\textwidth]{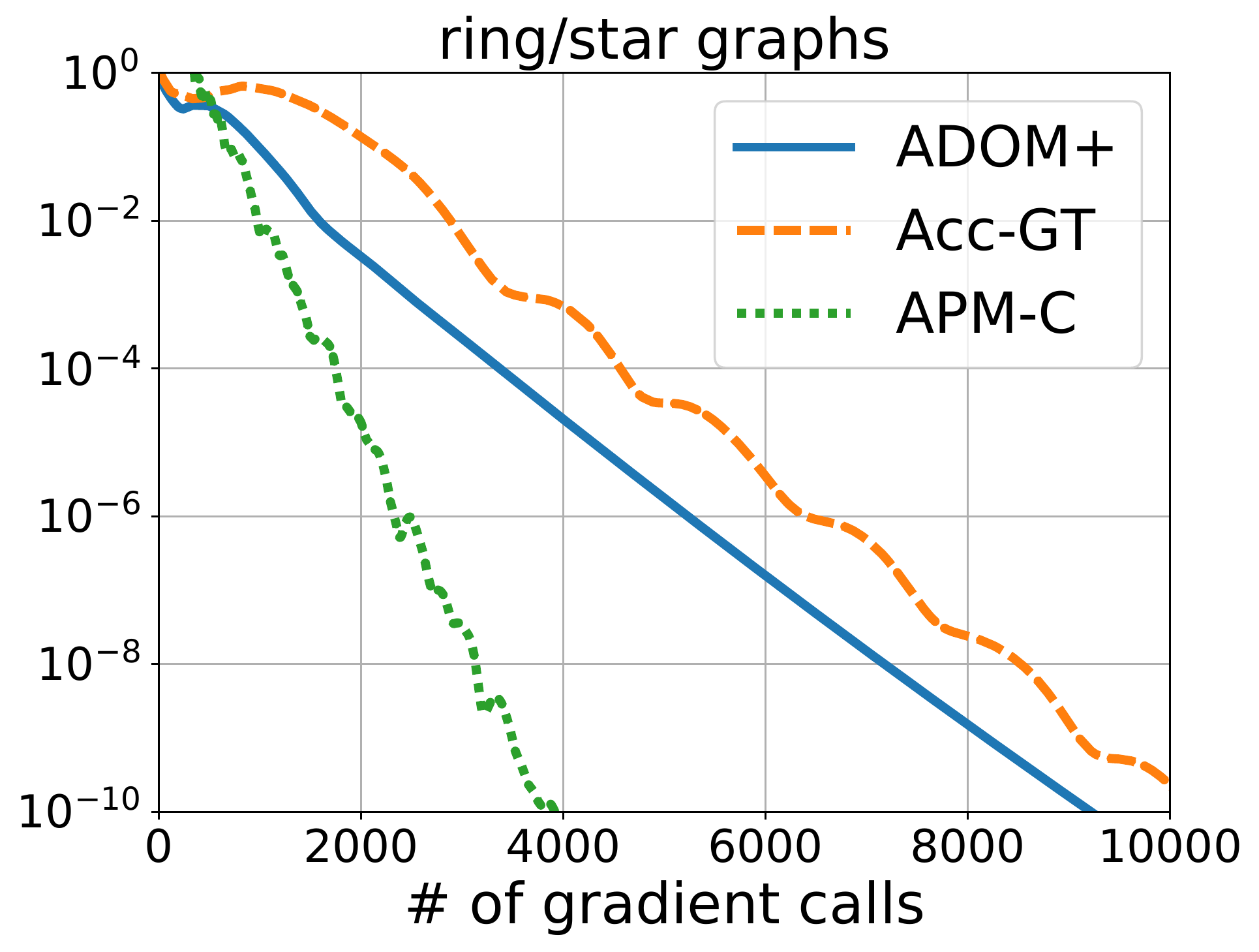}
	
	\caption{Comparison of our method \algname{ADOM+} and the baselines  \algname{Acc-GT} and \algname{APM-C}.}
	\label{fig:covtype}
\end{figure}

In this section we perform an illustrative experiment with logistic regression. We take $10,000$ samples from the {\tt covtype} LIBSVM\footnote{The LIBSVM~\citep{chang2011libsvm} dataset collection is available at\\ \href{https://www.csie.ntu.edu.tw/~cjlin/libsvmtools/datasets/}{https://www.csie.ntu.edu.tw/~cjlin/libsvmtools/datasets/}} dataset and distribute them across $n=100$ nodes of a network, $100$ samples per node. We use two types of networks: a sequence of random geometric graphs with $\chi \approx 30$, and a sequence which alternates between the ring and star topology, with $\chi \approx 1,000$. We choose the regularization parameter $\mu$ such that the condition number becomes $\kappa = 10^5$. We compare \algname{ADOM+} with multi-consensus with two state-of-the art decentralized algorithms  for time-varying networks: \algname{Acc-GT} with multi-consensus \citep{li2021accelerated}, and a variant of \algname{APM-C} for time-varying networks \citep{rogozin2020towards, li2018sharp}. We set all  parameters of \algname{Acc-GT} and \algname{APM-C} to those used in the experimental sections of the corresponding papers, and tune the parameters of \algname{ADOM+}. The results are presented in Figure~\ref{fig:covtype}. We see that \algname{ADOM+} has  similar empirical behavior to the recently proposed \algname{Acc-GT} method. Both these methods are better than \algname{APM-C} in terms of the number of decentralized communication rounds (this is expected, since this method has sublinear communication complexity), and worse in terms of the number of gradient calls. We provide more details and additional experiments in the appendix.

\bibliographystyle{apalike}
\bibliography{ref.bib}

\clearpage

%%%%%%%%%%%%%%%%%%%%%%%%%%%%%%%%%%%%%%%%%%%%%%%%%%%%%%%%%%%%%%%%%%%%%%%%%%%%%%%%%%

\appendix

\newpage

\part*{Appendix}

%\tableofcontents

%\clearpage
\section{Experimental Details and Additional Experiments}

\subsection{Experimental Details}
We perform experiments with logistic regression for binary classification with $\ell_2$ regularization. That is, our loss function has the form
\begin{equation}
	f_i(x) = \frac{1}{m}\sum_{j=1}^m \log(1 + \exp(-b_{ij}a_{ij}^\top x)) + \frac{r}{2}\sqn{x},
\end{equation}
where $a_{ij} \in \R^d$ and $b_{ij} \in \{-1,+1\}$ are data points and labels, $r>0$ is a regularization parameter, and $m$ is the number of data points stored on each node. In this section we generate synthetic datasets with \texttt{sklearn.datasets.make classification} function from scikit-learn library. We generate a number of datasets consisting of $10,000$ samples, distributed across $n=100$ nodes of the network, $m=100$ samples per each node. We vary the parameter $r$ to obtain different values of the condition number $\kappa$.

\subsection{Further experiments}

Here we simulate time-varying networks with a sequence of randomly generated geometric graphs. Geometric graphs are constructed by generating $n=100$ nodes from the uniform distribution over $[0,1]^2 \subset \R^2$, and connecting each pair of nodes whose distance is less than a certain {\em radius}. We  enforce connectivity by adding a small number of edges. We obtain a sequence of graphs $\{\cG^q\}_{q=0}^\infty$ by generating a set of $50$ random geometric graphs, and alternating between them in a cyclic manner.
We choose $\mW(q)$ to be the Laplacian matrix of the graphs $\cG^q$ divided by its largest eigenvalue. We vary the condition number $\chi$ by choosing different values of the radius parameter.

We compare \algname{ADOM+} with state-of-the art primal decentralized algorithms for time-varying networks: \algname{Acc-GT} \citep{li2021accelerated} and a variant of  \algname{APM-C} \citep{rogozin2020towards,li2018sharp}. We do not perform experiments with \algname{ADOM}, because it is a dual based algorithm, and because  its empirical behavior was studied in \citep{kovalev2021adom}. 

For each condition number of the problem $\kappa \in \{10,10^2,10^3,10^4\}$, and condition number of the time-varying network  $\chi \in \{3,8,37,223,2704,4628\}$, we perform a comparison of these algorithms. Figures~\ref{fig:comm} and~\ref{fig:grad} show the convergence of the algorithms in the number of decentralized communications and the number of local computations, for all chosen values of $\kappa$ and $\chi$, respectively. 

Overall, the results are similar to what was obtained in Section~\ref{sec:experiments}. We observe that \algname{ADOM+} and \algname{Acc-GT} have similar behavior, which is expected since they are both optimal. Both of them perform better than \algname{APM-C} in terms of the number of decentralized communication rounds, which is expected since \algname{APM-C} has a sublinear communication complexity only. However, \algname{APM-C} performs better in terms of the number of local computations. Indeed, while all three algorithms are optimal in local computation complexity,\algname{APM-C} has better constants.

\clearpage
\begin{figure}[H]
	\includegraphics[width=\textwidth]{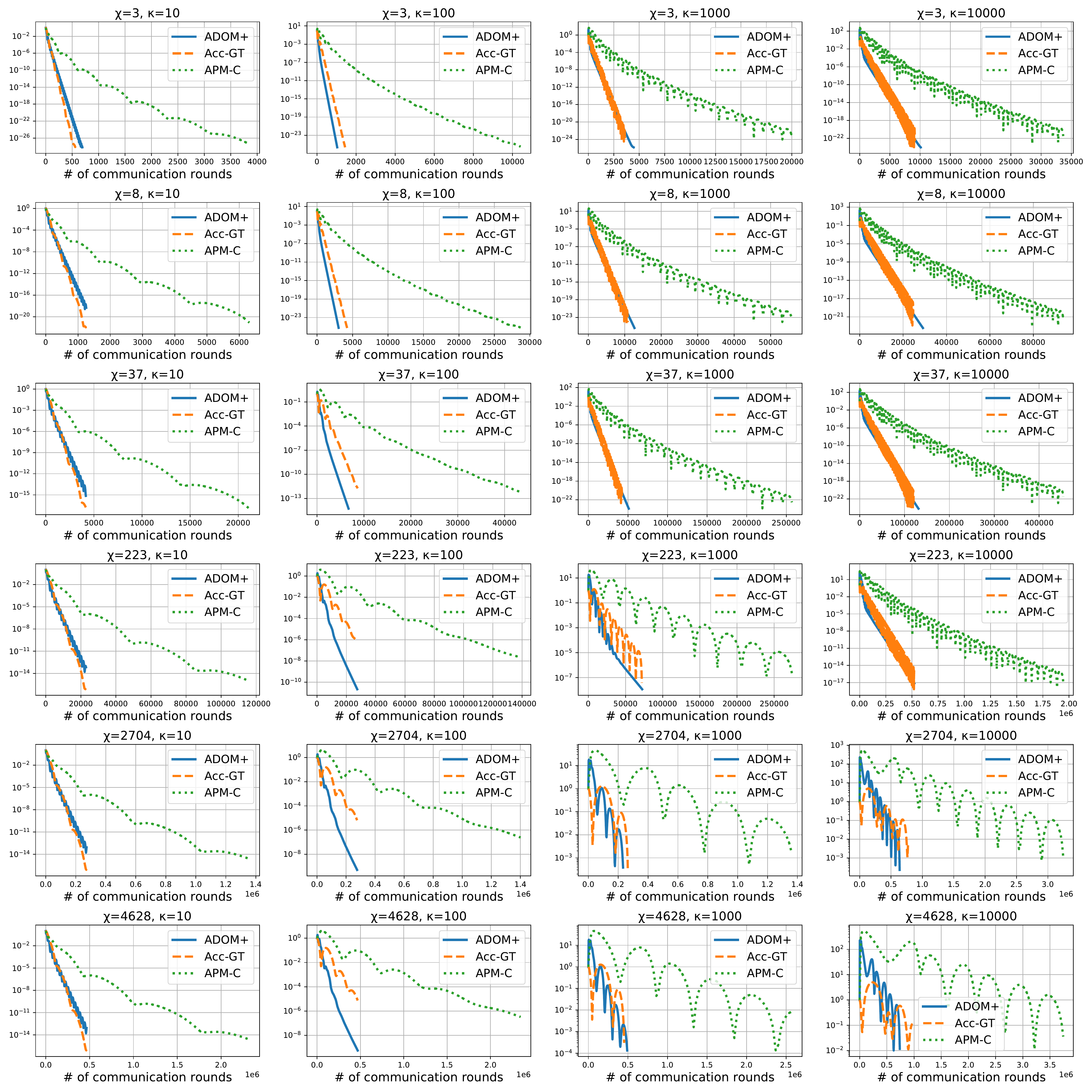}
	\caption{Comparison of our method \algname{ADOM+} and the baselines  \algname{Acc-GT} and \algname{APM-C} in {\bf decentralized communication complexity} on problems with $\kappa \in \{10,10^2,10^3,10^4\}$ and time-varying networks with $\chi \in \{3,8,37,223,2704,4628\}$.}
	\label{fig:comm}
\end{figure}

\begin{figure}[H]
	\includegraphics[width=\textwidth]{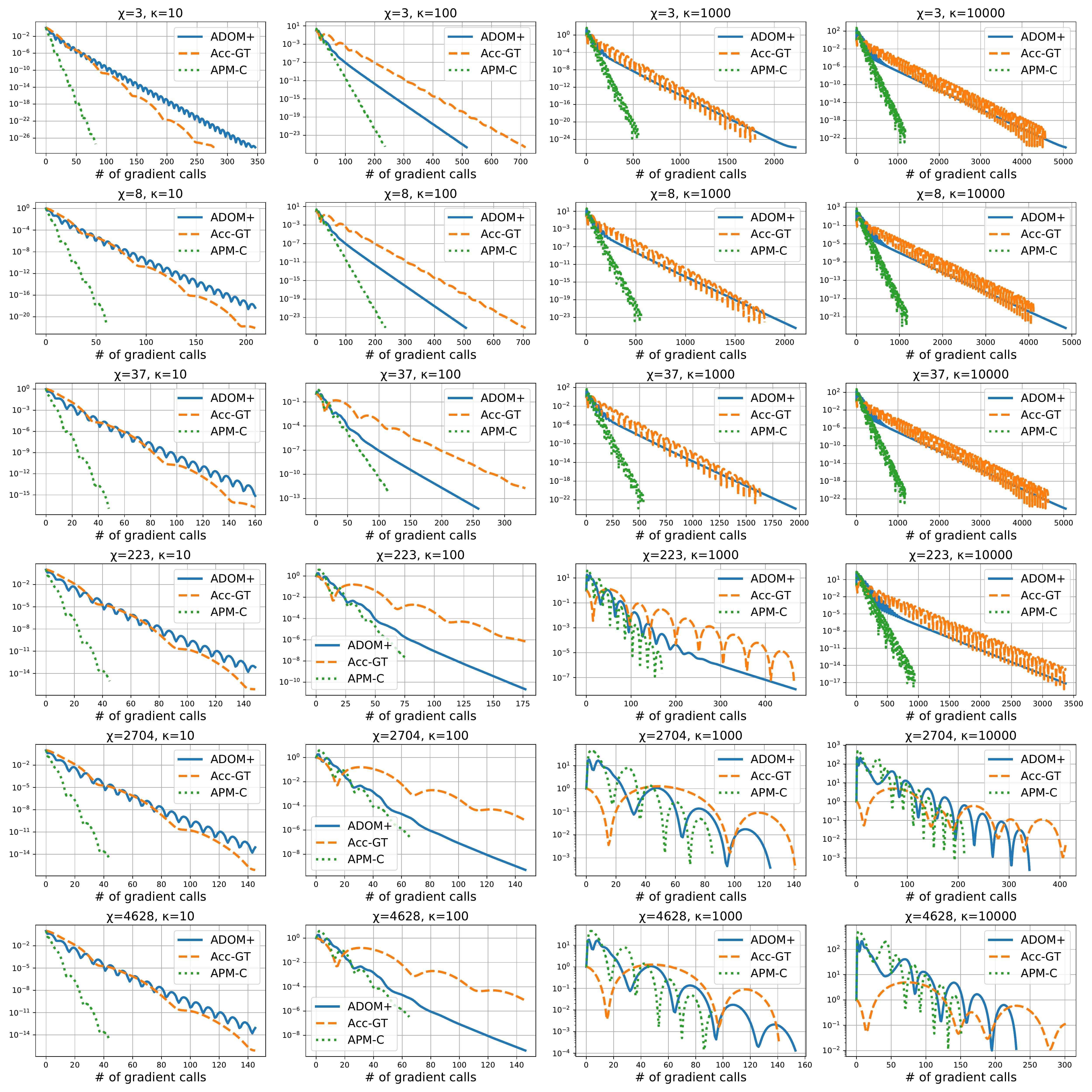}
	\caption{Comparison of our method \algname{ADOM+} and the baselines  \algname{Acc-GT} and \algname{APM-C} in {\bf local computation complexity} on problems with $\kappa \in \{10,10^2,10^3,10^4\}$ and time-varying networks with $\chi \in \{3,8,37,223,2704,4628\}$.}
	\label{fig:grad}
\end{figure}

\newpage

\section{Proof of Theorem~\ref{lower:thm:comm}}

\begin{proof}
	We choose number of nodes $n = 3 \floor{\chi / 3}$. Hence $n\geq 3$ and $n \mod 3 = 0$. Now, we divide the set of nodes $\cV = \{1,\ldots,n\}$ into three disjoint sets $\cV = \cV_1 \cup \cV_2 \cup \cV_3$: $\cV_1 = \{1,\ldots, n/3\}$, $\cV_2 = \{n/3 + 1,\ldots,2n/3\}$, $\cV_3 = \{2n/3 + 1, \ldots,n\}$. Note, that $|\cV_1| = |\cV_2| = |\cV_3| = n/3$.
	
	We define $\cG^q$ to be a star graph centered at the node $i_c(q) = |\cV_1| + 1 + (q \mod |\cV_2|)$. Note, that $i_c(q) \in \cV_2$ for all $q \in \{0,1,2,\ldots\}$. We define $\mW(q)$ as the Laplacian matrix of the graph $\cG(q)$. Hence, $\lmax(\mW(q)) /\lminp(\mW(q)) =  n \leq \chi$ and $\mW(q)$ satisfies condition \eqref{eq:chi}.
	
	We define functions $f_i$ in the following way:
	\begin{equation}\label{lower:f}
	f_i(x) = \begin{cases}
	\frac{\mu}{2}\sqn{x} + \frac{L-\mu}{4}\left[(x_{[1]} - 1)^2 + \sum_{l=1}^\infty(x_{[2l]} - x_{[2l+1]})^2\right],& i \in \cV_1\\
	\frac{\mu}{2}\sqn{x},& i \in \cV_2\\
	\frac{\mu}{2}\sqn{x} + \frac{L-\mu}{4} \sum_{l=1}^\infty(x_{[2l - 1]} - x_{[2l]})^2,& i \in \cV_3
	\end{cases}.
	\end{equation}
	The following lemma gives the solution of  problem \eqref{eq:main} with such a choice of $f_i$.
	\begin{lemma}\label{lower:lem:solution}
		Problem \eqref{eq:main}  with $f_i$ given by \eqref{lower:f} has a unique 	solution $x^* = (\rho^l)_{l=1}^\infty\in \ell_2$, where $\rho$ is given by 
		\begin{equation}\label{lower:rho}
		\rho = \frac{\sqrt{\frac{2L}{3\mu} + \frac{1}{3}} - 1}{\sqrt{\frac{2L}{3\mu} + \frac{1}{3}} + 1}.
		\end{equation}
	\end{lemma}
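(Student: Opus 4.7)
First I would combine the three terms into a clean, single-variable expression. Since $|\cV_1| = |\cV_3| = n/3$ and $|\cV_2| = n/3$, and since the two quadratic sums in the definitions of $f_i$ for $i\in\cV_1$ and $i\in\cV_3$ cover consecutive pairs with disjoint parities, they telescope into one sum over all consecutive pairs. Setting $\alpha \eqdef \frac{n(L-\mu)}{12}$ and $\beta \eqdef \frac{n\mu}{2}$, this yields
\begin{equation*}
F(x)\eqdef\sum_{i=1}^n f_i(x) = \beta\sum_{l=1}^\infty x_{[l]}^2 + \alpha(x_{[1]}-1)^2 + \alpha\sum_{l=1}^\infty (x_{[l]}-x_{[l+1]})^2.
\end{equation*}
Each $f_i$ is $\mu$-strongly convex, so $F$ is $n\mu$-strongly convex on $\ell_2$ and therefore admits a unique minimizer, which gives uniqueness for free.

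Next I would write the first-order optimality condition $\nabla F(x^*)=0$ componentwise. For $l\geq 2$ this gives the homogeneous second-order linear recurrence
\begin{equation*}
\alpha\, x_{[l+1]} - (\beta+2\alpha)\, x_{[l]} + \alpha\, x_{[l-1]} = 0,
\end{equation*}
while at $l=1$ it gives the boundary condition $(\beta+2\alpha)\,x_{[1]} - \alpha\, x_{[2]} = \alpha$. The characteristic polynomial $\alpha t^2 - (\beta+2\alpha)t + \alpha = 0$ has product of roots equal to $1$ (by Vi\`ete), so its roots are $\rho$ and $1/\rho$ for some $\rho\in(0,1)$ (the discriminant is positive since $\beta,\alpha>0$). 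The general solution is $x^*_{[l]} = C\rho^l + D\rho^{-l}$, and $\ell_2$-summability forces $D=0$. Plugging $x^*_{[l]}=C\rho^l$ into the boundary condition and using that $\rho$ is a root of the characteristic polynomial (so $(\beta+2\alpha)\rho - \alpha\rho^2 = \alpha$) gives $C=1$, hence $x^*_{[l]}=\rho^l$.

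The final step, which is the only genuinely algebraic one, is to reduce $\rho$ to the closed form stated in the lemma. Writing the characteristic equation as $\rho + \rho^{-1} = s$ with $s \eqdef 2 + \beta/\alpha$, I would use the identity: if $a^2=s+2$ and $b^2=s-2$, then $u\eqdef\frac{a-b}{a+b}$ satisfies $u + u^{-1} = \frac{2(a^2+b^2)}{a^2-b^2} = \frac{2\cdot 2s}{4}=s$, so $\rho = u = \frac{\sqrt{s+2}-\sqrt{s-2}}{\sqrt{s+2}+\sqrt{s-2}}$. Substituting the values $s-2 = \beta/\alpha = \frac{6\mu}{L-\mu}$ and $s+2 = \frac{4L+2\mu}{L-\mu}$ gives $\sqrt{(s+2)/(s-2)} = \sqrt{\frac{2L}{3\mu}+\frac{1}{3}}$, and dividing numerator and denominator of $\rho$ by $\sqrt{s-2}$ yields the formula \eqref{lower:rho}.

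The main obstacle is the clean algebraic simplification in the last paragraph; once the substitution $\rho+1/\rho=s$ together with rationalization via $\sqrt{s\pm 2}$ is in hand, the rest is a straightforward verification. Everything else (combining the sums, writing the recurrence, discarding the unbounded mode, fixing $C=1$, and uniqueness via strong convexity) is essentially mechanical.
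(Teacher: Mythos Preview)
Your argument is correct. Both you and the paper first reduce to the same one-variable objective
\[
F(x)=\frac{n\mu}{2}\sqn{x}+\frac{n(L-\mu)}{12}\Bigl[(x_{[1]}-1)^2+\sum_{l\ge 1}(x_{[l]}-x_{[l+1]})^2\Bigr],
\]
but the proofs then diverge. The paper takes the formula~\eqref{lower:rho} for $\rho$ as given, checks the identity $\rho+\rho^{-1}=\frac{2(L+2\mu)}{L-\mu}$, and uses it to complete the square directly on $F$, obtaining
\[
F(x)=\frac{n(L-\mu)}{12\rho}\Bigl[\sum_{l\ge 1}(\rho x_{[l]}-x_{[l+1]})^2+(x_{[1]}-\rho)^2+\rho-\rho^2\Bigr];
\]
existence, uniqueness, and the identity $x^*_{[l]}=\rho^l$ are then all read off from this sum-of-squares form. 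You instead appeal to strong convexity for uniqueness, set up and solve the first-order conditions as a linear recurrence via its characteristic polynomial, use the $\ell_2$ constraint to kill the growing mode, and only at the end \emph{derive} the closed form~\eqref{lower:rho} through the rationalization $\rho=\frac{\sqrt{s+2}-\sqrt{s-2}}{\sqrt{s+2}+\sqrt{s-2}}$. Your route is more constructive and makes explicit the role of square-summability; the paper's route is shorter and, as a byproduct, gives the optimal value $F(x^*)=\frac{n(L-\mu)(1-\rho)}{12}$.
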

	
	Consider the following quantity:
	\begin{equation}
	s_i(k) = \begin{cases}
	0, &\cH_i(k) \subseteq \{0\},\\
	\min\{s \in \{1,2,\ldots\} : \cH_i(k) \subset \Span(\{e_1,e_2,\ldots,e_s\})\}, & \text{otherwise}
	\end{cases},
	\end{equation}
	where $e_s$ is the $s$-th unit basis vector.
	Using assumptions on the algorithm from Section~\ref{sec:alg_class} we can make some conclusions about update of $s_i(k)$. In particular, if at time step $k$ algorithm performs a local computation round, then using the fact, that $f_i(x)$ is a quadratic function with a certain block structure (and $f^*(x)$ has the same block structure), one can observe that
	\begin{equation}\label{lower:s_update_comp}
	s_i(k+1) \leq s_i(k) + \begin{cases}
	1 - (s_i(k)  \mod 2),  &  i \in \cV_1\\
	0,  &  i \in \cV_2\\
	(s_i(k)  \mod 2),  &  i \in \cV_3
	\end{cases}.
	\end{equation}
	Similarly, if at time step $k$ communication round number $q$ was performed, then using the structure of $\cG^q$ one can observe that	\begin{equation}\label{lower:s_update_comm}
	s_i(k+1) \leq \begin{cases}
	\max\{s_i(k), s_{i_c(q)}(k)\}, & i \neq i_c(q)\\
	\max\{s_j(k) : j \in \cV\}, & i =i_c(q)
	\end{cases}.
	\end{equation}
	The next key lemma shows that $s_i(k)$ is bounded compared to the number of communication rounds.
	\begin{lemma}\label{lower:lem:comm}
		Let $k\in \{0,1,\ldots, k_q\}$ be any time step, where $k_q$ is a time step at which the algorithm performed communication round number $q \in \{0,1,2,\ldots\}$. Then the following statement is true:
		\begin{equation}\label{lower:eq:comm}
		s_i(k) \leq 2 \floor{q / |\cV_2|} + \begin{cases}
		0, & i \in \cV_3 \cup \{i_c(q),i_c(q) + 1, \ldots,|\cV_1| + |\cV_2|\}\\
		1, & \text{otherwise}
		\end{cases}.
		\end{equation}
	\end{lemma}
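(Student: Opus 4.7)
The plan is to proceed by induction on $q$. Two simple monotonicity observations reduce the task: $s_i(k)$ is non-decreasing in $k$ (both update rules in Section~\ref{sec:alg_class} only enlarge $\cH_i(k)$), and, for each fixed $i$, the right-hand side of \eqref{lower:eq:comm} is non-decreasing in $q$ (moving from $q$ to $q+1$ either promotes one node from the even bound to the odd bound, or, at a wrap-around, uniformly bumps all bounds by $2$). Combining these two facts, it will suffice to verify the inequality at the single time step $k = k_q$; validity at every $k < k_q$ will follow from monotonicity and the inductive hypothesis applied at the previous value of $q$.

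For the base case $q = 0$, only local computations occur between time $0$ and $k_0$. Starting from $s_i(0) = 0$, the rule \eqref{lower:s_update_comp} keeps $s_i \equiv 0$ on $\cV_2$ and on $\cV_3$ (whose growth requires odd $s_i$), and allows a single bump $0 \to 1$ on $\cV_1$. Since $i_c(0) = |\cV_1|+1$, the claimed bound is exactly $0$ on $\cV_2 \cup \cV_3$ and $1$ on $\cV_1$, matching the computed state.

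For the inductive step, write $p = \floor{q/|\cV_2|}$ and $r = q - p|\cV_2|$, assume the lemma at index $q-1$, and trace $s_i$ from $k_{q-1}$ to $k_q$ in two stages. Stage one is the communication round at time $k_{q-1}$, centered at $i_c(q-1)$, which by the inductive hypothesis has $s$-value at most $2\floor{(q-1)/|\cV_2|}$ (the even bound); by \eqref{lower:s_update_comm} the non-center nodes absorb at most this even bound (so their categorized bounds are preserved), while the center absorbs the global maximum and is thereby promoted into the odd class. Stage two is the sequence of local computations between $k_{q-1}+1$ and $k_q$; by \eqref{lower:s_update_comp} these leave $\cV_2$ untouched, may bump $s_i$ by one on $\cV_1$ only when $s_i$ is even, and may bump $s_i$ by one on $\cV_3$ only when $s_i$ is odd. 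A parity check then confirms that no $s_i$ overshoots its prescribed bound. Matching against the new categorization at $q$: in the non-wrap case ($r \neq 0$) the "visited" set gains exactly $i_c(q-1) = i_c(q)-1$, whose bound was just promoted in stage one; in the wrap case ($r = 0$) the value of $p$ jumps by one and all nodes are reclassified as unvisited with the larger even bound $2p$, which trivially dominates the old bound of at most $2p-1$.

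The main obstacle I expect is keeping the case split between the non-wrap and wrap steps cleanly organized, especially because in the wrap case the "visited" set fully resets. The structural content the bound is encoding is that one full cycle of the center through $\cV_2$ must elapse before a unit of growth in $s_i$ can be transported from $\cV_1$ to $\cV_3$: a local gradient on $\cV_1$ produces odd $s_i$, the star's center must then cycle through $\cV_2$ to propagate this information, and only afterwards can a local computation on $\cV_3$ flip the parity back to even. This is exactly why the coefficient of $q$ in \eqref{lower:eq:comm} is $1/|\cV_2| = \Theta(1/\chi)$, and why $\Omega(\chi)$ communication rounds are mandatory per unit of progress, eventually yielding the $\chi$ factor in the final communication lower bound.
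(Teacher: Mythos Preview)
Your approach is essentially the same as the paper's: induction on $q$, with the same case split between the non-wrap step ($i_c(q-1) \neq |\cV_1|+|\cV_2|$) and the wrap step, tracing the effect of the communication at $k_{q-1}$ followed by local computations up to $k_q$.

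One minor inaccuracy worth flagging: your claim that ``both update rules only enlarge $\cH_i(k)$'' is not correct as stated --- the definition in Section~\ref{sec:alg_class} says $\cH_i(k+1) \subset \Span(\ldots)$, so the algorithm is permitted to forget information and $s_i(k)$ need not be non-decreasing. This does not actually damage your argument, because your step-by-step trace using \eqref{lower:s_update_comp} and \eqref{lower:s_update_comm} already upper-bounds $s_i$ at every intermediate time $k \in \{k_{q-1}+1,\ldots,k_q\}$; combined with the (correct) monotonicity of the right-hand side in $q$ and the inductive hypothesis at $q-1$ for $k \leq k_{q-1}$, this covers all $k \leq k_q$ without needing $s_i$ itself to be monotone. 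Just drop the monotonicity-of-$s_i$ reduction and state directly that the trace handles the intermediate steps.
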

	Lemma~\ref{lower:lem:comm} implies
	\begin{equation*}
	s_i(k) \leq \frac{2q}{|\cV_2|}+1 = \frac{6q}{n} + 1 
	=
	\frac{2q}{\floor{\chi /3}} + 1 
	\leq
	\frac{12q}{\chi} + 1.
	\end{equation*}
	Hence, using Lemma~\eqref{lower:lem:solution} we can lower bound $\sqn{x_i^o(k) - x^*}$:
	\begin{align*}
	\sqn{x_i^o(k) - x^*}
	&=
	\sum_{l=1}^\infty (x_i^o(k) - x^*)_{[l]}^2
	\geq
	\sum_{l=s_i(k) + 1}^\infty (x_i^o(k) - x^*)_{[l]}^2
	\\&=
	\sum_{l=s_i(k) + 1}^\infty \rho^{2l}
	=
	\rho^{2s_i(k) + 2} \sum_{l=0}^\infty \rho^{2l}
	=
	\frac{\rho^{2s_i(k) + 2}}{1-\rho^2}
	\\&\geq
	\frac{\rho^{24q/\chi + 4}}{1-\rho^2} \geq C\rho^{24q/\chi},
	\end{align*}
	where $C = \frac{\rho^4}{1-\rho^2}$. Note, that $\rho \geq \max\left\{0,1 - \frac{\sqrt{6\mu}}{\sqrt{L}} \right\}$ and hence
	\begin{equation*}
	\sqn{x_i^o(k) - x^*} \geq C \left(\max\left\{0,1 - \frac{\sqrt{6\mu}}{\sqrt{L}} \right\}\right)^{24q/\chi}.
	\end{equation*}
	Finally, using Bernoulli inequality we get
	\begin{equation*}
	\sqn{x_i^o(k) - x^*} \geq C \left(\max\left\{0,1 - \frac{24\sqrt{6\mu}}{\sqrt{L}} \right\}\right)^{q/\chi}.
	\end{equation*}

\end{proof}   

\begin{proof}[Proof of Lemma~\ref{lower:lem:solution}]
	\begin{align*}
	\sum_{i \in \cV} f_i(x)
	&=
	\frac{n\mu}{2}\sqn{x} + \frac{n(L-\mu)}{12}\left[(x_{[1]} - 1)^2 + \sum_{l=1}^\infty(x_{[l]} - x_{[l+1]})^2\right]
	\\&=
	\frac{n\mu}{2}\sqn{x}
	+ 
	\frac{n(L-\mu)}{12}\left[x_{[1]}^2 - 2x_{[1]} + 1 + \sum_{l=1}^\infty(x_{[l]}^2 - 2x_{[l]} x_{[l+1]} + x_{[l+1]}^2)\right]
	\\&=
	\frac{n\mu}{2}\sqn{x}
	+ 
	\frac{n(L-\mu)}{12}\left[\sum_{l=1}^\infty(2x_{[l]}^2 - 2x_{[l]} x_{[l+1]}) - 2x_{[1]} + 1 \right]
	\\&=
	\frac{n(L-\mu)}{12}\left[\sum_{l=1}^\infty\left(\left(2 + \frac{6\mu}{L-\mu}\right)x_{[l]}^2 - 2x_{[l]} x_{[l+1]}\right) - 2x_{[1]} + 1 \right]
	\\&=
	\frac{n(L-\mu)}{12}\left[\sum_{l=1}^\infty\left(\frac{2(L+2\mu)}{L-\mu}x_{[l]}^2 - 2x_{[l]} x_{[l+1]}\right) - 2x_{[1]} + 1 \right].
	\end{align*}
	Next, we use the fact that $\frac{2(L+2\mu)}{L-\mu} = \rho + \frac{1}{\rho}$ with $\rho$ given by \eqref{lower:rho} and get
	\begin{align*}
	\sum_{i \in \cV} f_i(x)
	&=
	\frac{n(L-\mu)}{12}\left[\sum_{l=1}^\infty\left(\rho x_{[l]}^2  + \frac{1}{\rho}x_{[l]}^2 - 2x_{[l]} x_{[l+1]}\right) - 2x_{[1]} + 1 \right]
	\\&=
	\frac{n(L-\mu)}{12\rho}\left[\sum_{l=1}^\infty\left(\rho^2 x_{[l]}^2  + x_{[l]}^2 - 2\rho x_{[l]} x_{[l+1]}\right) - 2\rho x_{[1]} + \rho \right]
	\\&=
	\frac{n(L-\mu)}{12\rho}\left[\sum_{l=1}^\infty\left(\rho^2 x_{[l]}^2  - 2\rho x_{[l]} x_{[l+1]} + x_{[l+1]}^2 \right)+ x_{[1]}^2 - 2\rho x_{[1]} + \rho \right]
	\\&=
	\frac{n(L-\mu)}{12\rho}\left[\sum_{l=1}^\infty(\rho x_{[l]} - x_{[l+1]})^2+ (x_{[1]}- \rho )^2 + \rho - \rho^2 \right].
	\end{align*}
	Now it's clear that $\sum_{i \in \cV} f_i(x) \geq \frac{n(L-\mu)(1-\rho)}{12}$ and $\sum_{i \in \cV} f_i(x) = \frac{n(L-\mu)(1-\rho)}{12}$ if and only if $x= x^*$. Hence, $x^*$ is indeed a unique solution to the problem \eqref{eq:main}.
\end{proof}                

\begin{proof}[Proof of Lemma~\ref{lower:lem:comm}]
	We prove this by induction in $q$.
	
	\paragraph{Induction basis.} When $q = 0$ and $k \in \{0,1,\ldots, k_0 \}$ (meaning no communication rounds were done), from \eqref{lower:s_update_comp} we can conclude, that
	\begin{equation*}
	s_i(k) \leq \begin{cases}
	1, & i  \in \cV_1\\
	0 , & i \in \cV_2 \cup \cV_3
	\end{cases},
	\end{equation*}
	which is $\eqref{lower:eq:comm}$ in case $q = 0$.
	
	\paragraph{Induction step.} Now, we assume that \eqref{lower:eq:comm} holds for $q$ and $k \in \{0,1,\ldots,k_q\}$ and prove it for $q + 1$ and $k \in \{0,1,\ldots,k_{q+1}\}$. Indeed, consider time step $k_q$ at which communication round $q$ is performed. Consider two possible cases:
	\begin{enumerate}
		\item $i_c(q) \neq |\cV_1| + |\cV_2|$.\\
		In this case $\floor{(q+1) / \cV_2} = \floor{q / \cV_2}$ and $i_c(q+1)= i_c(q) + 1$. Since \eqref{lower:eq:comm} holds for $q$ and $k_q$, using \eqref{lower:s_update_comm} we get $s_{i_c(q)}(k_q) \leq 2\floor{q / \cV_2}$ and  $s_{i_c(q)}(k_q+1) \leq 2\floor{q / \cV_2} + 1$. Hence, using \eqref{lower:s_update_comm} we get for all $i \in \cV$
		\begin{align*}
		s_i(k) &\leq 2 \floor{q / |\cV_2|} + \begin{cases}
		0, & i \in \cV_3 \cup \{i_c(q) + 1, \ldots,|\cV_1| + |\cV_2|\}\\
		1, & \text{otherwise}
		\end{cases}
		\\&=
		2 \floor{(q+1) / |\cV_2|} + \begin{cases}
		0, & i \in \cV_3 \cup \{i_c(q) + 1, \ldots,|\cV_1| + |\cV_2|\}\\
		1, & \text{otherwise}
		\end{cases},
		\end{align*}
		which is \eqref{lower:eq:comm} for $q+1$ and $k = k_q + 1$.
		
		\item $i_c(q) = |\cV_1| + |\cV_2|$.\\
		In this case $\floor{(q+1) / \cV_2} = \floor{q / \cV_2} + 1$ and $i_c(q+1)= |\cV_1| + 1$. Since \eqref{lower:eq:comm} holds for $q$ and $k_q$, we have an upper bound $s_i(k_q) \leq 2\floor{q / \cV_2} + 1$ and \eqref{lower:s_update_comm} implies 
		\begin{align*}
		s_i(k_q + 1) &\leq 2\floor{q / \cV_2} + 1 \leq 2(\floor{q / \cV_2} + 1) = 2\floor{(q+1) / \cV_2}
		\\&\leq
		2 \floor{(q+1) / |\cV_2|} + \begin{cases}
		0, & i \in \cV_3 \cup \{i_c(q+1), \ldots,|\cV_1| + |\cV_2|\}\\
		1, & \text{otherwise}
		\end{cases}.
		\end{align*}
		which is \eqref{lower:eq:comm} for $q+1$ and $k = k_q + 1$.
	\end{enumerate}
	In both cases we have obtained \eqref{lower:eq:comm} for $q+1$ and $k = k_q + 1$. It remains to see that at any time step $k \in \{k_q + 2, k_{q+1} - 1\}$ only local computation is performed and \eqref{lower:s_update_comp} implies that \eqref{lower:eq:comm} also holds for $q +1$ and any $k \in \{0,1,\ldots, k_{q+1}\}$.
	
\end{proof}

\newpage

\section{Proof of Theorem~\ref{thm:adom+_conv}}

By $\bg_F(x,y)$ we denote Bregman distance $\bg_F(x,y)\eqdef F(x) - F(y) - \<\g F(y),x-y>$.

\begin{lemma}
	Let $\tau_2$ be defined as follows:
	\begin{equation}\label{scary:tau2}
	\tau_2 = \sqrt{\mu/L}.
	\end{equation}
	Let $\tau_1$ be defined as follows:
	\begin{equation}\label{scary:tau1}
	\tau_1 = (1/\tau_2 + 1/2)^{-1}.
	\end{equation}
	Let $\eta$ be defined as follows:
	\begin{equation}\label{scary:eta}
	\eta = (L\tau_2)^{-1}.
	\end{equation}
	Let $\alpha$ be defined as follows:
	\begin{equation}\label{scary:alpha}
	\alpha = \mu/2.
	\end{equation}
	Let $\nu$ be defined as follows:
	\begin{equation}\label{scary:nu}
	\nu = \mu/2.
	\end{equation}
	Let $\Psi_x^k$ be defined as follows:
	\begin{equation}\label{scary:Psi_x}
	\Psi_x^k = \left(\frac{1}{\eta} + \alpha\right)\sqn{x^{k} - x^*} + \frac{2}{\tau_2}\left(\bg_f(x_f^{k},x^*)-\frac{\nu}{2}\sqn{x_f^{k} - x^*} \right)
	\end{equation}
	Then the following inequality holds:
	\begin{equation}\label{scary:eq:x}
	\Psi_x^{k+1} \leq \left(1 - \frac{\sqrt{\mu}}{\sqrt{\mu}+2\sqrt{L}}\right)\Psi_x^k
	+
	2\< y^{k+1} - y^*,x^{k+1} - x^*>
	-
	\left(\bg_F(x_g^k,x^*) - \frac{\nu}{2}\sqn{x_g^k - x^*}\right).
	\end{equation}
\end{lemma}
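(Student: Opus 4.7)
The plan is to expand $\Psi_x^{k+1}$ using lines \ref{scary:line:x:1}--\ref{scary:line:x:3} of Algorithm~\ref{scary:alg} and match coefficients against the target bound. First I would absorb the quadratic perturbation by introducing the shifted objective $\tilde F(x) = F(x) - \frac{\nu}{2}\sqn{x}$, which is $(\mu-\nu)$-strongly convex and $(L-\nu)$-smooth; note that $\bg_F(x,x^*) - \frac{\nu}{2}\sqn{x-x^*} = \bg_{\tilde F}(x,x^*)$, and optimality condition \eqref{opt:x} rewrites as $\g\tilde F(x^*) = y^*$. The $x$-update in line \ref{scary:line:x:2} then reads $x^{k+1} - x^k = -\eta\alpha(x^{k+1} - x_g^k) - \eta[\g\tilde F(x_g^k) - y^{k+1}]$. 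Taking the inner product with $2(x^{k+1} - x^*)$ and applying the polarization identity $2\<a-b,a-c>=\sqn{a-b}+\sqn{a-c}-\sqn{b-c}$ yields an exact equality for $(1/\eta + \alpha)\sqn{x^{k+1} - x^*}$ whose right-hand side contains $(1/\eta)\sqn{x^k - x^*}$, $\alpha\sqn{x_g^k - x^*}$, the non-positive squares $-(1/\eta)\sqn{x^{k+1}-x^k}$ and $-\alpha\sqn{x^{k+1}-x_g^k}$, the cross term $-2\<\g\tilde F(x_g^k) - y^*, x^{k+1}-x^*>$, and exactly the desired $2\<y^{k+1}-y^*, x^{k+1}-x^*>$.

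Next I would apply $(L-\nu)$-smoothness of $\tilde F$ along the direction $x_f^{k+1} - x_g^k = \tau_2(x^{k+1}-x^k)$ (line \ref{scary:line:x:3}). A short expansion of $\bg_{\tilde F}(x_f^{k+1},x^*)$ around $x_g^k$, combined with $\g\tilde F(x^*)=y^*$ and the identity $x_f^{k+1}-x^* = (x_g^k-x^*)+\tau_2(x^{k+1}-x^k)$, produces
\begin{equation*}
\tfrac{2}{\tau_2}\bg_{\tilde F}(x_f^{k+1},x^*) \leq \tfrac{2}{\tau_2}\bg_{\tilde F}(x_g^k,x^*) + 2\<\g\tilde F(x_g^k)-y^*, x^{k+1}-x^k> + (L-\nu)\tau_2\sqn{x^{k+1}-x^k}.
\end{equation*}
Adding this to the identity from step one collapses the two $\g\tilde F(x_g^k)-y^*$ cross terms into $-2\<\g\tilde F(x_g^k)-y^*, x^k - x^*>$, and the choice $1/\eta = L\tau_2$ makes the coefficient of $\sqn{x^{k+1}-x^k}$ equal to $-\nu\tau_2 \leq 0$, so that square is discarded.

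To process the remaining cross term, I would use the extrapolation rule (line \ref{scary:line:x:1}) to write $\tau_1(x^k - x^*) = (x_g^k-x^*) - (1-\tau_1)(x_f^k - x^*)$, and then expand via the two three-point Bregman identities
\begin{equation*}
\<\g\tilde F(x_g^k)-y^*, x_g^k-x^*> = \bg_{\tilde F}(x_g^k,x^*) + \bg_{\tilde F}(x^*,x_g^k),
\end{equation*}
\begin{equation*}
\<\g\tilde F(x_g^k)-y^*, x_f^k-x^*> = \bg_{\tilde F}(x_f^k,x^*) - \bg_{\tilde F}(x_f^k,x_g^k) + \bg_{\tilde F}(x^*,x_g^k).
\end{equation*}
Four coefficient matchings then have to be verified. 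The coefficient of $\bg_{\tilde F}(x_g^k,x^*)$ collapses from $-\frac{2}{\tau_1} + \frac{2}{\tau_2}$ to exactly $-1$ by \eqref{scary:tau1}, matching the $-\bg_{\tilde F}(x_g^k,x^*)$ on the target side; the coefficient of $\bg_{\tilde F}(x_f^k,x^*)$ equals $\frac{2(1-\tau_1)}{\tau_1}$ and is dominated by $(1-\tau_1/2)\frac{2}{\tau_2}$ because $\tau_1 = \frac{2\tau_2}{2+\tau_2} \leq \tau_2$; the combination $\alpha\sqn{x_g^k - x^*} - 2\bg_{\tilde F}(x^*, x_g^k)$ is non-positive by $(\mu-\nu)$-strong convexity of $\tilde F$ together with $\alpha = \nu = \mu/2$; and the coefficient of $\sqn{x^k-x^*}$ satisfies $(1-\tau_1/2)(1/\eta + \alpha) = 1/\eta$, which reduces to $\alpha = \frac{\tau_1}{\eta(2-\tau_1)}$ and evaluates exactly to $\mu/2$ after substituting $\eta = 1/(L\tau_2)$ and $\tau_2 = \sqrt{\mu/L}$.

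The main obstacle will be the bookkeeping of this last step: the proof relies on every coefficient matching essentially exactly under the specific parameter choices, with the extraneous non-positive residues $-\nu\tau_2\sqn{x^{k+1}-x^k}$, $-\alpha\sqn{x^{k+1}-x_g^k}$, and $-\frac{2(1-\tau_1)}{\tau_1}\bg_{\tilde F}(x_f^k,x_g^k)$ simply dropped. Once the matching is verified the claimed bound follows, and the contraction factor rewrites as $1 - \tau_1/2 = 1 - \frac{\sqrt{\mu}}{\sqrt{\mu}+2\sqrt{L}}$ via $\tau_1 = \frac{2\sqrt{\mu}}{2\sqrt{L}+\sqrt{\mu}}$.
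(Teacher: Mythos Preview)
Your proposal is correct and follows essentially the same route as the paper's proof: expand the $x^{k+1}$ update via line~\ref{scary:line:x:2}, push the gradient term through line~\ref{scary:line:x:3} using smoothness, split the remaining cross term via line~\ref{scary:line:x:1}, and close with strong convexity and the parameter identities. The only difference is cosmetic: you absorb the $-\tfrac{\nu}{2}\sqn{\cdot}$ shift into $\tilde F$ up front and work with exact three-point Bregman identities, whereas the paper keeps $F$ and the $\nu$-quadratic separate and applies the convexity inequalities directly, then takes $\max\{1-\tau_2/2,\ (1+\eta\alpha)^{-1}\}$ at the end (which equals your $1-\tau_1/2$).
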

\begin{proof}
	\begin{align*}
	\frac{1}{\eta}\sqn{x^{k+1}  - x^*}
	&=
	\frac{1}{\eta}\sqn{x^k - x^*}+\frac{2}{\eta}\<x^{k+1} - x^k,x^{k+1}- x^*> - \frac{1}{\eta}\sqn{x^{k+1} - x^k}.
	\end{align*}
	Using Line~\ref{scary:line:x:2} of Algorithm~\ref{scary:alg} we get
	\begin{align*}
	\frac{1}{\eta}\sqn{x^{k+1}  - x^*}
	&=
	\frac{1}{\eta}\sqn{x^k - x^*}
	+
	2\alpha\<x_g^k - x^{k+1},x^{k+1}- x^*>
	\\&-
	2\<\g F(x_g^k) - \nu x_g^k - y^{k+1},x^{k+1} - x^*>
	-
	\frac{1}{\eta}\sqn{x^{k+1} - x^k}
	\\&=
	\frac{1}{\eta}\sqn{x^k - x^*}
	+
	2\alpha\<x_g^k - x^*- x^{k+1} + x^*,x^{k+1}- x^*>
	\\&-
	2\<\g F(x_g^k) - \nu x_g^k - y^{k+1},x^{k+1} - x^*>
	-
	\frac{1}{\eta}\sqn{x^{k+1} - x^k}
	\\&\leq
	\frac{1}{\eta}\sqn{x^k - x^*}
	-
	\alpha\sqn{x^{k+1} - x^*} + \alpha\sqn{x_g^k - x^*}
	-
	2\<\g F(x_g^k) - \nu x_g^k - y^{k+1},x^{k+1} - x^*>
	\\&-
	\frac{1}{\eta}\sqn{x^{k+1} - x^k}.
	\end{align*}
	Using optimality condition \eqref{opt:x} we get
	\begin{align*}
	\frac{1}{\eta}\sqn{x^{k+1}  - x^*}
	&\leq
	\frac{1}{\eta}\sqn{x^k - x^*}
	-
	\alpha\sqn{x^{k+1} - x^*} + \alpha\sqn{x_g^k - x^*}
	-
	\frac{1}{\eta}\sqn{x^{k+1} - x^k}
	\\&
	-2\<\g F(x_g^k) - \g F(x^*),x^{k+1} - x^*>
	+
	2\nu\< x_g^k - x^*,x^{k+1} - x^*>
	+
	2\< y^{k+1} - y^*,x^{k+1} - x^*>.
	\end{align*}
	Using Line~\ref{scary:line:x:3} of Algorithm~\ref{scary:alg} we get
	\begin{align*}
	\frac{1}{\eta}\sqn{x^{k+1}  - x^*}
	&\leq
	\frac{1}{\eta}\sqn{x^k - x^*}
	-
	\alpha\sqn{x^{k+1} - x^*} + \alpha\sqn{x_g^k - x^*}
	-
	\frac{1}{\eta\tau_2^2}\sqn{x_f^{k+1} - x_g^k}
	\\&
	-2\<\g F(x_g^k) - \g F(x^*),x^k - x^*>
	+
	2\nu\< x_g^k - x^*,x^k - x^*>
	+
	2\< y^{k+1} - y^*,x^{k+1} - x^*>
	\\&-
	\frac{2}{\tau_2}\<\g F(x_g^k) - \g F(x^*),x_f^{k+1} - x_g^k>
	+
	\frac{2\nu}{\tau_2}\< x_g^k - x^*,x_f^{k+1} - x_g^k>
	\\&=
	\frac{1}{\eta}\sqn{x^k - x^*}
	-
	\alpha\sqn{x^{k+1} - x^*} + \alpha\sqn{x_g^k - x^*}
	-
	\frac{1}{\eta\tau_2^2}\sqn{x_f^{k+1} - x_g^k}
	\\&
	-2\<\g F(x_g^k) - \g F(x^*),x^k - x^*>
	+
	2\nu\< x_g^k - x^*,x^k - x^*>
	+
	2\< y^{k+1} - y^*,x^{k+1} - x^*>
	\\&-
	\frac{2}{\tau_2}\<\g F(x_g^k) - \g F(x^*),x_f^{k+1} - x_g^k>
	+
	\frac{\nu}{\tau_2}\left(\sqn{x_f^{k+1} - x^*} - \sqn{x_g^k - x^*}-\sqn{x_f^{k+1} - x_g^k}\right).
	\end{align*}
	Using $L$-smoothness of $\bg_F(x,x^*)$ in $x$, which follows from $L$-smoothness of $F(x)$, we get
	\begin{align*}
	\frac{1}{\eta}\sqn{x^{k+1}  - x^*}
	&\leq
	\frac{1}{\eta}\sqn{x^k - x^*}
	-
	\alpha\sqn{x^{k+1} - x^*} + \alpha\sqn{x_g^k - x^*}
	-
	\frac{1}{\eta\tau_2^2}\sqn{x_f^{k+1} - x_g^k}
	\\&
	-2\<\g F(x_g^k) - \g F(x^*),x^k - x^*>
	+
	2\nu\< x_g^k - x^*,x^k - x^*>
	+
	2\< y^{k+1} - y^*,x^{k+1} - x^*>
	\\&-
	\frac{2}{\tau_2}\<\g F(x_g^k) - \g F(x^*),x_f^{k+1} - x_g^k>
	+
	\frac{\nu}{\tau_2}\left(\sqn{x_f^{k+1} - x^*} - \sqn{x_g^k - x^*}-\sqn{x_f^{k+1} - x_g^k}\right)
	\\&\leq
	\frac{1}{\eta}\sqn{x^k - x^*}
	-
	\alpha\sqn{x^{k+1} - x^*} + \alpha\sqn{x_g^k - x^*}
	-
	\frac{1}{\eta\tau_2^2}\sqn{x_f^{k+1} - x_g^k}
	\\&
	-2\<\g F(x_g^k) - \g F(x^*),x^k - x^*>
	+
	2\nu\< x_g^k - x^*,x^k - x^*>
	+
	2\< y^{k+1} - y^*,x^{k+1} - x^*>
	\\&-
	\frac{2}{\tau_2}\left(\bg_f(x_f^{k+1},x^*) - \bg_f(x_g^k,x^*) - \frac{L}{2}\sqn{x_f^{k+1} - x_g^k}\right)
	\\&+
	\frac{\nu}{\tau_2}\left(\sqn{x_f^{k+1} - x^*} - \sqn{x_g^k - x^*}-\sqn{x_f^{k+1} - x_g^k}\right)
	\\&=
	\frac{1}{\eta}\sqn{x^k - x^*}
	-
	\alpha\sqn{x^{k+1} - x^*} + \alpha\sqn{x_g^k - x^*}
	+
	\left(\frac{L - \nu}{\tau_2}-\frac{1}{\eta\tau_2^2}\right)
	\sqn{x_f^{k+1} - x_g^k}
	\\&
	-2\<\g F(x_g^k) - \g F(x^*),x^k - x^*>
	+
	2\nu\< x_g^k - x^*,x^k - x^*>
	+
	2\< y^{k+1} - y^*,x^{k+1} - x^*>
	\\&-
	\frac{2}{\tau_2}\left(\bg_f(x_f^{k+1},x^*) - \bg_f(x_g^k,x^*)\right)
	+
	\frac{\nu}{\tau_2}\left(\sqn{x_f^{k+1} - x^*} - \sqn{x_g^k - x^*}\right)
	\end{align*}
	Using Line~\ref{scary:line:x:1} of Algorithm~\ref{scary:alg} we get
	\begin{align*}
	\frac{1}{\eta}\sqn{x^{k+1}  - x^*}
	&\leq
	\frac{1}{\eta}\sqn{x^k - x^*}
	-
	\alpha\sqn{x^{k+1} - x^*} + \alpha\sqn{x_g^k - x^*}
	+
	\left(\frac{L - \nu}{\tau_2}-\frac{1}{\eta\tau_2^2}\right)
	\sqn{x_f^{k+1} - x_g^k}
	\\&
	-2\<\g F(x_g^k) - \g F(x^*),x_g^k - x^*>
	+
	2\nu\sqn{x_g^k - x^*}
	+
	\frac{2(1-\tau_1)}{\tau_1}\<\g F(x_g^k) - \g F(x^*),x_f^k - x_g^k>
	\\&+
	\frac{2\nu(1-\tau_1)}{\tau_1}\<x_g^k - x_f^k,x_g^k - x^*>
	+
	2\< y^{k+1} - y^*,x^{k+1} - x^*>
	\\&-
	\frac{2}{\tau_2}\left(\bg_f(x_f^{k+1},x^*) - \bg_f(x_g^k,x^*)\right)
	+
	\frac{\nu}{\tau_2}\left(\sqn{x_f^{k+1} - x^*} - \sqn{x_g^k - x^*}\right)
	\\&=
	\frac{1}{\eta}\sqn{x^k - x^*}
	-
	\alpha\sqn{x^{k+1} - x^*} + \alpha\sqn{x_g^k - x^*}
	+
	\left(\frac{L - \nu}{\tau_2}-\frac{1}{\eta\tau_2^2}\right)
	\sqn{x_f^{k+1} - x_g^k}
	\\&
	-2\<\g F(x_g^k) - \g F(x^*),x_g^k - x^*>
	+
	2\nu\sqn{x_g^k - x^*}
	+
	\frac{2(1-\tau_1)}{\tau_1}\<\g F(x_g^k) - \g F(x^*),x_f^k - x_g^k>
	\\&+
	\frac{\nu(1-\tau_1)}{\tau_1}\left(\sqn{x_g^k- x_f^k} + \sqn{x_g^k - x^*} - \sqn{x_f^k - x^*}\right)
	+
	2\< y^{k+1} - y^*,x^{k+1} - x^*>
	\\&-
	\frac{2}{\tau_2}\left(\bg_f(x_f^{k+1},x^*) - \bg_f(x_g^k,x^*)\right)
	+
	\frac{\nu}{\tau_2}\left(\sqn{x_f^{k+1} - x^*} - \sqn{x_g^k - x^*}\right).
	\end{align*}
	Using $\mu$-strong convexity of $\bg_F(x,x^*)$ in $x$, which follows from $\mu$-strong convexity of $F(x)$, we get
	\begin{align*}
	\frac{1}{\eta}\sqn{x^{k+1}  - x^*}
	&\leq
	\frac{1}{\eta}\sqn{x^k - x^*}
	-
	\alpha\sqn{x^{k+1} - x^*} + \alpha\sqn{x_g^k - x^*}
	+
	\left(\frac{L - \nu}{\tau_2}-\frac{1}{\eta\tau_2^2}\right)
	\sqn{x_f^{k+1} - x_g^k}
	\\&
	-2\bg_F(x_g^k,x^*) - \mu\sqn{x_g^k - x^*}
	+
	2\nu\sqn{x_g^k - x^*}
	\\&+
	\frac{2(1-\tau_1)}{\tau_1}\left(\bg_F(x_f^k,x^*) - \bg_F(x_g^k,x^*) - \frac{\mu}{2}\sqn{x_f^k - x_g^k}\right)
	\\&+
	\frac{\nu(1-\tau_1)}{\tau_1}\left(\sqn{x_g^k- x_f^k} + \sqn{x_g^k - x^*} - \sqn{x_f^k - x^*}\right)
	+
	2\< y^{k+1} - y^*,x^{k+1} - x^*>
	\\&-
	\frac{2}{\tau_2}\left(\bg_f(x_f^{k+1},x^*) - \bg_f(x_g^k,x^*)\right)
	+
	\frac{\nu}{\tau_2}\left(\sqn{x_f^{k+1} - x^*} - \sqn{x_g^k - x^*}\right)
	\\&=
	\frac{1}{\eta}\sqn{x^k - x^*}
	-
	\alpha\sqn{x^{k+1} - x^*}
	+
	\frac{2(1-\tau_1)}{\tau_1}\left(\bg_F(x_f^k,x^*) - \frac{\nu}{2}\sqn{x_f^k - x^*}\right)
	\\&-
	\frac{2}{\tau_2}\left(\bg_f(x_f^{k+1},x^*)-\frac{\nu}{2}\sqn{x_f^{k+1} - x^*} \right)
	+
	2\< y^{k+1} - y^*,x^{k+1} - x^*>
	\\&+
	2\left(\frac{1}{\tau_2}-\frac{1}{\tau_1}\right)\bg_F(x_g^k,x^*)
	+
	\left(\alpha - \mu + \nu+\frac{\nu}{\tau_1}-\frac{\nu}{\tau_2}\right)\sqn{x_g^k - x^*}
	\\&+
	\left(\frac{L - \nu}{\tau_2}-\frac{1}{\eta\tau_2^2}\right)
	\sqn{x_f^{k+1} - x_g^k}
	+
	\frac{(1-\tau_1)(\nu-\mu)}{\tau_1}\sqn{x_f^k - x_g^k}.
	\end{align*}
	Using $\eta$ defined by \eqref{scary:eta}, $\tau_1$ defined by \eqref{scary:tau1} and the fact that $\nu < \mu$ we get
	\begin{align*}
	\frac{1}{\eta}\sqn{x^{k+1}  - x^*}
	&\leq
	\frac{1}{\eta}\sqn{x^k - x^*}
	-
	\alpha\sqn{x^{k+1} - x^*}
	+
	\frac{2(1-\tau_2/2)}{\tau_2}\left(\bg_F(x_f^k,x^*) - \frac{\nu}{2}\sqn{x_f^k - x^*}\right)
	\\&-
	\frac{2}{\tau_2}\left(\bg_f(x_f^{k+1},x^*)-\frac{\nu}{2}\sqn{x_f^{k+1} - x^*} \right)
	+
	2\< y^{k+1} - y^*,x^{k+1} - x^*>
	\\&-\bg_F(x_g^k,x^*)
	+
	\left(\alpha - \mu + \frac{3\nu}{2}\right)\sqn{x_g^k - x^*}.
	\end{align*}
	Using $\alpha$ defined by \eqref{scary:alpha} and $\nu$ defined by \eqref{scary:nu} we get
	\begin{align*}
	\frac{1}{\eta}\sqn{x^{k+1}  - x^*}
	&\leq
	\frac{1}{\eta}\sqn{x^k - x^*}
	-
	\alpha\sqn{x^{k+1} - x^*}
	+
	\frac{2(1-\tau_2/2)}{\tau_2}\left(\bg_F(x_f^k,x^*) - \frac{\nu}{2}\sqn{x_f^k - x^*}\right)
	\\&-
	\frac{2}{\tau_2}\left(\bg_f(x_f^{k+1},x^*)-\frac{\nu}{2}\sqn{x_f^{k+1} - x^*} \right)
	+
	2\< y^{k+1} - y^*,x^{k+1} - x^*>
	\\&-
	\left(\bg_F(x_g^k,x^*) - \frac{\nu}{2}\sqn{x_g^k - x^*}\right).
	\end{align*}
	After rearranging and using $\Psi_x^k $ definition \eqref{scary:Psi_x} we get
	\begin{align*}
	\Psi_x^{k+1}
	&\leq
	\max\left\{1 - \tau_2/2, 1/(1+\eta\alpha)\right\}\Psi_x^k
	+
	2\< y^{k+1} - y^*,x^{k+1} - x^*>
	-
	\left(\bg_F(x_g^k,x^*) - \frac{\nu}{2}\sqn{x_g^k - x^*}\right)
	\\&\leq 
	\left(1 - \frac{\sqrt{\mu}}{\sqrt{\mu}+2\sqrt{L}}\right)\Psi_x^k
	+
	2\< y^{k+1} - y^*,x^{k+1} - x^*>
	-
	\left(\bg_F(x_g^k,x^*) - \frac{\nu}{2}\sqn{x_g^k - x^*}\right).
	\end{align*}
\end{proof}

\begin{lemma}
	The following inequality holds:
	\begin{equation}
	\begin{split}\label{scary:eq:1}
	-\sqn{y^{k+1}- y^*}
	&\leq 
	\frac{(1-\sigma_1)}{\sigma_1}\sqn{y_f^k - y^*}
	- \frac{1}{\sigma_2}\sqn{y_f^{k+1} - y^*}
	\\&-
	\left(\frac{1}{\sigma_1} - \frac{1}{\sigma_2}\right)\sqn{y_g^k - y^*}
	+
	\left(\sigma_2- \sigma_1\right)\sqn{y^{k+1} - y^k}
	\end{split}
	\end{equation}
\end{lemma}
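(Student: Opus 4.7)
The plan is to express $y^{k+1}$ as an \emph{affine} combination of $y_g^k$, $y_f^k$, $y_f^{k+1}$ (with coefficients summing to one but not necessarily nonnegative), and then exploit the general identity
\begin{equation*}
\sqN{z - y^*} = \sum_i \alpha_i \sqN{u_i - y^*} - \sum_i \alpha_i \sqN{u_i - z},
\end{equation*}
valid whenever $z = \sum_i \alpha_i u_i$ and $\sum_i \alpha_i = 1$, with no sign restriction on the $\alpha_i$. This is essentially the three-point identity used in the $x$-block estimate, suitably adapted.

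Concretely, I would first solve Line~\ref{scary:line:y:1} for $y^k$ to get $y^k = \sigma_1^{-1} y_g^k - \sigma_1^{-1}(1-\sigma_1) y_f^k$, then substitute into the rearranged Line~\ref{scary:line:y:3}, namely $y^{k+1} = y^k + \sigma_2^{-1}(y_f^{k+1} - y_g^k)$, to obtain
\begin{equation*}
y^{k+1} = \left(\tfrac{1}{\sigma_1} - \tfrac{1}{\sigma_2}\right) y_g^k - \tfrac{1-\sigma_1}{\sigma_1} y_f^k + \tfrac{1}{\sigma_2} y_f^{k+1},
\end{equation*}
and check that the coefficients sum to one. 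Applying the identity above with $z = y^{k+1}$, $(u_1, u_2, u_3) = (y_g^k, y_f^k, y_f^{k+1})$ and the corresponding $\alpha_i$ yields an exact formula for $-\sqN{y^{k+1} - y^*}$ whose ``first order'' part already matches the first three terms on the right-hand side of the target inequality.

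It remains to bound the residual $\left(\tfrac{1}{\sigma_1} - \tfrac{1}{\sigma_2}\right)\sqN{y_g^k - y^{k+1}} - \tfrac{1-\sigma_1}{\sigma_1}\sqN{y_f^k - y^{k+1}} + \tfrac{1}{\sigma_2}\sqN{y_f^{k+1} - y^{k+1}}$ by $(\sigma_2 - \sigma_1)\sqN{y^{k+1} - y^k}$. For this I would rewrite each of the three differences in terms of the two ``atomic'' vectors $a = y^{k+1} - y^k$ and $b = y_f^k - y^k$, using $y_g^k - y^k = (1-\sigma_1) b$ (from Line~\ref{scary:line:y:1}) and $y_f^{k+1} - y_g^k = \sigma_2 a$ (from Line~\ref{scary:line:y:3}). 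Expanding and gathering coefficients of $\sqN{a}$, $\sqN{b}$, and $\<a,b>$, the $\sqN{a}$-coefficient should collapse to $\sigma_2 - 1$, the $\sqN{b}$-coefficient to $\sigma_1 - 1$, and the $\<a,b>$-coefficient to $2(1-\sigma_1)$, so that the residual minus $(\sigma_2-\sigma_1)\sqN{a}$ reduces precisely to $-(1-\sigma_1)\sqN{a-b}$.

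The main obstacle is purely algebraic bookkeeping: the identity step is automatic, but extracting the residual as a perfect square requires careful tracking of signs, since two of the three affine weights change sign depending on whether $\sigma_1,\sigma_2$ lie in $(0,1)$ or elsewhere. Once the residual is written as $-(1-\sigma_1)\sqN{a-b}$, nonpositivity follows as long as $\sigma_1 \le 1$, which will be ensured by the parameter choices made later in the convergence analysis. No other ingredient (smoothness, monotonicity, gossip matrix, etc.) is used in this lemma; it is a pure consequence of the algorithm's interpolation scheme.
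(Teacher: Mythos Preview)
Your proposal is correct and arrives at exactly the same inequality with the same slack term as the paper. The paper's argument is a repackaging of the same algebra: instead of writing $y^{k+1}$ as an affine combination of $(y_g^k,y_f^k,y_f^{k+1})$ and invoking the general variance identity, the paper rearranges the relation into
\[
\tfrac{\sigma_1}{\sigma_2}(y_f^{k+1}-y^*) + \bigl(1-\tfrac{\sigma_1}{\sigma_2}\bigr)(y_g^k-y^*)
= \sigma_1(y^{k+1}-y^*) + (1-\sigma_1)(y_f^k-y^*),
\]
so that both sides are \emph{convex} combinations. Squaring and using the two-point identity $\sqn{\lambda u + (1-\lambda)v} = \lambda\sqn{u} + (1-\lambda)\sqn{v} - \lambda(1-\lambda)\sqn{u-v}$ on each side gives an equality; the only inequality comes from dropping the term $-\sigma_1(1-\sigma_1)\sqn{y^{k+1}-y_f^k}$ on the right, and the cross-term on the left is converted to $(\sigma_2-\sigma_1)\sqn{y^{k+1}-y^k}$ via Line~\ref{scary:line:y:3}. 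After dividing by $\sigma_1$, the discarded term is exactly your $(1-\sigma_1)\sqn{a-b}$ (since $a-b = y^{k+1}-y_f^k$), so the two proofs drop the same slack. The paper's route has the advantage of almost no residual bookkeeping (a single term is dropped, a single term is rewritten), whereas your route is more systematic and would generalize more readily to other interpolation schemes; both use only $\sigma_1\le 1$ and nothing else.
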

\begin{proof}
	Lines~\ref{scary:line:y:1} and~\ref{scary:line:y:3} of Algorithm~\ref{scary:alg} imply
	\begin{align*}
	y_f^{k+1} &= y_g^k + \sigma_2(y^{k+1} - y_k)\\&=
	y_g^k + \sigma_2 y^{k+1} - \frac{\sigma_2}{\sigma_1}\left(y_g^k - (1-\sigma_1)y_f^k\right)
	\\&=
	\left(1 - \frac{\sigma_2}{\sigma_1}\right)y_g^k + \sigma_2 y^{k+1} + \left(\frac{\sigma_2}{\sigma_1}- \sigma_2\right) y_f^k.
	\end{align*}
	After subtracting $y^*$ and rearranging we get
	\begin{align*}
	(y_f^{k+1}- y^*)+ \left(\frac{\sigma_2}{\sigma_1} - 1\right) (y_g^k - y^*)
	=
	\sigma_2( y^{k+1} - y^*)+ \left(\frac{\sigma_2}{\sigma_1} - \sigma_2\right)(y_f^k - y^*).
	\end{align*}
	Multiplying both sides by $\frac{\sigma_1}{\sigma_2}$ gives
	\begin{align*}
	\frac{\sigma_1}{\sigma_2}(y_f^{k+1}- y^*)+ \left(1-\frac{\sigma_1}{\sigma_2}\right) (y_g^k - y^*)
	=
	\sigma_1( y^{k+1} - y^*)+ \left(1 - \sigma_1\right)(y_f^k - y^*).
	\end{align*}
	Squaring both sides gives
	\begin{align*}
	\frac{\sigma_1}{\sigma_2}\sqn{y_f^{k+1} - y^*} + \left(1- \frac{\sigma_1}{\sigma_2}\right)\sqn{y_g^k - y^*} - \frac{\sigma_1}{\sigma_2}\left(1-\frac{\sigma_1}{\sigma_2}\right)\sqn{y_f^{k+1} - y_g^k}
	\leq
	\sigma_1\sqn{y^{k+1} - y^*} + (1-\sigma_1)\sqn{y_f^k - y^*}.
	\end{align*}
	Rearranging gives
	\begin{align*}
	-\sqn{y^{k+1}- y^*} \leq -\left(\frac{1}{\sigma_1} - \frac{1}{\sigma_2}\right)\sqn{y_g^k - y^*} + \frac{(1-\sigma_1)}{\sigma_1}\sqn{y_f^k - y^*} - \frac{1}{\sigma_2}\sqn{y_f^{k+1} - y^*}+
	\frac{1}{\sigma_2}\left(1 - \frac{\sigma_1}{\sigma_2}\right)\sqn{y_f^{k+1} - y_g^k}.
	\end{align*}
	Using Line~\ref{scary:line:y:3} of Algorithm~\ref{scary:alg} we get
	\begin{align*}
	-\sqn{y^{k+1}- y^*} \leq -\left(\frac{1}{\sigma_1} - \frac{1}{\sigma_2}\right)\sqn{y_g^k - y^*} + \frac{(1-\sigma_1)}{\sigma_1}\sqn{y_f^k - y^*} - \frac{1}{\sigma_2}\sqn{y_f^{k+1} - y^*}+
	\left(\sigma_2 - \sigma_1\right)\sqn{y^{k+1} - y^k}.
	\end{align*}
\end{proof}
\newpage

\begin{lemma}
	Let $\beta$ be defined as follows:
	\begin{equation}\label{scary:beta}
	\beta = 1/(2L).
	\end{equation}
	Let $\sigma_1$ be defined as follows:
	\begin{equation}\label{scary:sigma1}
	\sigma_1 = (1/\sigma_2 + 1/2)^{-1}.
	\end{equation}
	Then the following inequality holds:
	\begin{equation}
	\begin{split}\label{scary:eq:y}
	\MoveEqLeft[4]
	\left(\frac{1}{\theta} + \frac{\beta}{2}\right)\sqn{y^{k+1} - y^*}
	+
	\frac{\beta}{2\sigma_2}\sqn{y_f^{k+1} - y^*}\\
	&\leq
	\frac{1}{\theta}\sqn{y^k - y^*}
	+
	\frac{\beta(1-\sigma_2/2)}{2\sigma_2}\sqn{y_f^k - y^*}
	+
	\bg_F(x_g^k, x^*) - \frac{\nu}{2}\sqn{x_g^k - x^*}
	-
	2\<x^{k+1} - x^*, y^{k+1} - y^*>
	\\&-
	2\nu^{-1}\<y_g^k + z_g^k - (y^* + z^*), y^{k+1} - y^*>
	-
	\frac{\beta}{4}\sqn{y_g^k - y^*}
	+
	\left(\frac{\beta\sigma_2^2}{4} - \frac{1}{\theta}\right)\sqn{y^{k+1} - y^k}.
	\end{split}
	\end{equation}
\end{lemma}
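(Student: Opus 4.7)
I would mirror the derivation of \eqref{scary:eq:x}. The starting point is the three-point identity
\begin{equation*}
\tfrac{1}{\theta}\sqn{y^{k+1}-y^*} = \tfrac{1}{\theta}\sqn{y^k - y^*} + \tfrac{2}{\theta}\<y^{k+1}-y^k, y^{k+1}-y^*> - \tfrac{1}{\theta}\sqn{y^{k+1}-y^k},
\end{equation*}
into which I would substitute Line~\ref{scary:line:y:2} to express $\tfrac{1}{\theta}(y^{k+1}-y^k)$ as $\beta(\g F(x_g^k) - \nu x_g^k - y^{k+1}) - [\nu^{-1}(y_g^k + z_g^k) + x^{k+1}]$. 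Invoking \eqref{opt:x} to replace $\g F(x^*) - \nu x^*$ by $y^*$ and \eqref{opt:y} to replace $\nu^{-1}(y^* + z^*)$ by $-x^*$, the inner product splits cleanly into the three targeted terms $-2\beta\sqn{y^{k+1}-y^*}$, $-2\<x^{k+1}-x^*, y^{k+1}-y^*>$, $-2\nu^{-1}\<y_g^k + z_g^k - (y^* + z^*), y^{k+1}-y^*>$, plus a single residual $R \eqdef 2\beta\<\g F(x_g^k) - \g F(x^*) - \nu(x_g^k - x^*),\, y^{k+1}-y^*>$.

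The main step is to control $R$ by a fraction of the Bregman term $\bg_F(x_g^k, x^*) - \tfrac{\nu}{2}\sqn{x_g^k - x^*}$. Introducing the shifted function $\tilde F \eqdef F - \tfrac{\nu}{2}\sqn{\cdot}$, which is convex and $(L-\nu)$-smooth since $\nu < \mu$, a short calculation gives $\bg_{\tilde F}(x_g^k, x^*) = \bg_F(x_g^k, x^*) - \tfrac{\nu}{2}\sqn{x_g^k - x^*}$ and the co-coercivity bound $\sqn{\g \tilde F(x_g^k) - \g \tilde F(x^*)} \leq 2(L-\nu)\bg_{\tilde F}(x_g^k, x^*)$. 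Applying Young's inequality with a parameter tuned so that the Bregman coefficient lands at exactly $1$ yields $R \leq \bg_F(x_g^k, x^*) - \tfrac{\nu}{2}\sqn{x_g^k - x^*} + \beta(1-\nu/L)\sqn{y^{k+1}-y^*}$. The residual quadratic coefficient $\beta(1-\nu/L)$ is strictly smaller than $\beta$, which creates the slack needed in the next step.

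Finally, I would add $\tfrac{\beta}{2}$ times \eqref{scary:eq:1}. The choice \eqref{scary:sigma1} gives the identities $\tfrac{1}{\sigma_1} - \tfrac{1}{\sigma_2} = \tfrac{1}{2}$ and $\tfrac{1-\sigma_1}{\sigma_1} = \tfrac{1-\sigma_2/2}{\sigma_2}$, which produce the $-\tfrac{\beta}{4}\sqn{y_g^k - y^*}$ and $\tfrac{\beta(1-\sigma_2/2)}{2\sigma_2}\sqn{y_f^k - y^*}$ coefficients exactly, while the elementary bound $\sigma_2 - \sigma_1 = \sigma_2^2/(2+\sigma_2) \leq \sigma_2^2/2$ yields the $\tfrac{\beta\sigma_2^2}{4}\sqn{y^{k+1}-y^k}$ term. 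After combining, the coefficient of $\sqn{y^{k+1}-y^*}$ on the left-hand side becomes $\tfrac{1}{\theta} + \tfrac{\beta}{2} + \beta\nu/L$, which exceeds the target coefficient by the nonnegative amount $\beta\nu/L$; discarding this excess produces exactly \eqref{scary:eq:y}.

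The main obstacle I anticipate is the Young's-inequality step: the parameter must be chosen so that the Bregman-type coefficient lands at exactly $1$ (matching the target) while simultaneously the residual quadratic coefficient in $\sqn{y^{k+1}-y^*}$ stays strictly below $\beta$, so that after combination with $\tfrac{\beta}{2}$ times \eqref{scary:eq:1} the leftover on the $\sqn{y^{k+1}-y^*}$ side is nonnegative and can be discarded. The choice $\beta = 1/(2L)$ in \eqref{scary:beta} is precisely what provides this slack, through the saved factor $\nu/L$.
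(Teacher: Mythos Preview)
Your plan is correct and follows the same skeleton as the paper's proof: three-point identity, substitution of Line~\ref{scary:line:y:2}, use of \eqref{opt:x} and \eqref{opt:y}, a co-coercivity/Young step to control the residual, and then $\tfrac{\beta}{2}$ times \eqref{scary:eq:1} with the $\sigma_1$ identities you wrote out. The only place you diverge is the residual bound: the paper uses the \emph{symmetric} Young inequality $2\<a,b>\leq\sqn{a}+\sqn{b}$ and the cruder fact that $\tilde F=F-\tfrac{\nu}{2}\sqn{\cdot}$ is $L$-smooth (not $(L-\nu)$-smooth), so that $\beta\sqn{\g\tilde F(x_g^k)-\g\tilde F(x^*)}\leq 2L\beta\,\bg_{\tilde F}(x_g^k,x^*)=\bg_{\tilde F}(x_g^k,x^*)$ lands \emph{exactly} on the target coefficient, leaving a net $-\beta\sqn{y^{k+1}-y^*}$ with no slack to discard. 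Your route---tuning the Young parameter against the sharper $(L-\nu)$-smoothness and then throwing away the extra $\beta\nu/L$---is a harmless over-refinement that reaches the same inequality.
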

\begin{proof}
	\begin{align*}
	\frac{1}{\theta}\sqn{y^{k+1} - y^*}
	&=
	\frac{1}{\theta}\sqn{y^k - y^*} + \frac{2}{\theta}\<x^{k+1} - x^k , x^{k+1} - x^*> - \frac{1}{\theta}\sqn{y^{k+1} - y^k}. 
	\end{align*}
	Using Line~\ref{scary:line:y:2} of Algorithm~\ref{scary:alg} we get
	\begin{align*}
	\frac{1}{\theta}\sqn{y^{k+1} - y^*}
	&=
	\frac{1}{\theta}\sqn{y^k - y^*}
	+
	2\beta\<\g F(x_g^k) - \nu x_g^k - y^{k+1}, y^{k+1} - y^*>
	\\&-
	2\<\nu^{-1}(y_g^k + z_g^k) + x^{k+1}, y^{k+1} - y^*>
	-
	\frac{1}{\theta}\sqn{y^{k+1} - y^k}.
	\end{align*}
	Using optimality condition \eqref{opt:x} we get
	\begin{align*}
	\frac{1}{\theta}\sqn{y^{k+1} - y^*}
	&=
	\frac{1}{\theta}\sqn{y^k - y^*}
	+
	2\beta\<\g F(x_g^k) - \nu x_g^k - (\g F(x^*) - \nu x^*) + y^*- y^{k+1}, y^{k+1} - y^*>
	\\&-
	2\<\nu^{-1}(y_g^k + z_g^k) + x^{k+1}, y^{k+1} - y^*>
	-
	\frac{1}{\theta}\sqn{y^{k+1} - y^k}
	\\&=
	\frac{1}{\theta}\sqn{y^k - y^*}
	+
	2\beta\<\g F(x_g^k) - \nu x_g^k - (\g F(x^*) - \nu x^*), y^{k+1} - y^*>
	-
	2\beta\sqn{y^{k+1} - y^*}
	\\&-
	2\<\nu^{-1}(y_g^k + z_g^k) + x^{k+1}, y^{k+1} - y^*>
	-
	\frac{1}{\theta}\sqn{y^{k+1} - y^k}
	\\&\leq
	\frac{1}{\theta}\sqn{y^k - y^*}
	+
	\beta\sqn{\g F(x_g^k) - \nu x_g^k - (\g F(x^*) - \nu x^*)}
	-
	\beta\sqn{y^{k+1} - y^*}
	\\&-
	2\<\nu^{-1}(y_g^k + z_g^k) + x^{k+1}, y^{k+1} - y^*>
	-
	\frac{1}{\theta}\sqn{y^{k+1} - y^k}.
	\end{align*}
	Function $F(x) - \frac{\nu}{2}\sqn{x}$ is convex and $L$-smooth, which implies
	\begin{align*}
	\frac{1}{\theta}\sqn{y^{k+1} - y^*}
	&\leq
	\frac{1}{\theta}\sqn{y^k - y^*}
	+
	2\beta L\left(\bg_F(x_g^k, x^*) - \frac{\nu}{2}\sqn{x_g^k - x^*}\right)
	-
	\beta\sqn{y^{k+1} - y^*}
	\\&-
	2\<\nu^{-1}(y_g^k + z_g^k) + x^{k+1}, y^{k+1} - y^*>
	-
	\frac{1}{\theta}\sqn{y^{k+1} - y^k}.
	\end{align*}
	Using $\beta$ definition \eqref{scary:beta} we get
	\begin{align*}
	\frac{1}{\theta}\sqn{y^{k+1} - y^*}
	&\leq
	\frac{1}{\theta}\sqn{y^k - y^*}
	+
	\bg_F(x_g^k, x^*) - \frac{\nu}{2}\sqn{x_g^k - x^*}
	-
	\beta\sqn{y^{k+1} - y^*}
	\\&-
	2\<\nu^{-1}(y_g^k + z_g^k) + x^{k+1}, y^{k+1} - y^*>
	-
	\frac{1}{\theta}\sqn{y^{k+1} - y^k}.
	\end{align*}
	Using optimality condition \eqref{opt:y} we get
	\begin{align*}
	\frac{1}{\theta}\sqn{y^{k+1} - y^*}
	&\leq
	\frac{1}{\theta}\sqn{y^k - y^*}
	+
	\bg_F(x_g^k, x^*) - \frac{\nu}{2}\sqn{x_g^k - x^*}
	-
	\beta\sqn{y^{k+1} - y^*}
	\\&-
	2\nu^{-1}\<y_g^k + z_g^k - (y^* + z^*), y^{k+1} - y^*>
	-
	2\<x^{k+1} - x^*, y^{k+1} - y^*>
	-
	\frac{1}{\theta}\sqn{y^{k+1} - y^k}.
	\end{align*}
	Using \eqref{scary:eq:1} together with $\sigma_1$ definition \eqref{scary:sigma1} we get
	\begin{align*}
	\frac{1}{\theta}\sqn{y^{k+1} - y^*}
	&\leq
	\frac{1}{\theta}\sqn{y^k - y^*}
	+
	\bg_F(x_g^k, x^*) - \frac{\nu}{2}\sqn{x_g^k - x^*}
	-
	\frac{\beta}{2}\sqn{y^{k+1} - y^*}
	\\&+
	\frac{\beta(1-\sigma_2/2)}{2\sigma_2}\sqn{y_f^k - y^*}
	-
	\frac{\beta}{2\sigma_2}\sqn{y_f^{k+1} - y^*}
	-
	\frac{\beta}{4}\sqn{y_g^k - y^*}
	+
	\frac{\beta\left(\sigma_2- \sigma_1\right)}{2}\sqn{y^{k+1} - y^k}
	\\&-
	2\nu^{-1}\<y_g^k + z_g^k - (y^* + z^*), y^{k+1} - y^*>
	-
	2\<x^{k+1} - x^*, y^{k+1} - y^*>
	-
	\frac{1}{\theta}\sqn{y^{k+1} - y^k}
	\\&\leq
	\frac{1}{\theta}\sqn{y^k - y^*}
	-
	\frac{\beta}{2}\sqn{y^{k+1} - y^*}
	+
	\frac{\beta(1-\sigma_2/2)}{2\sigma_2}\sqn{y_f^k - y^*}
	-
	\frac{\beta}{2\sigma_2}\sqn{y_f^{k+1} - y^*}
	\\&+
	\bg_F(x_g^k, x^*) - \frac{\nu}{2}\sqn{x_g^k - x^*}
	-
	\frac{\beta}{4}\sqn{y_g^k - y^*}
	+
	\left(\frac{\beta\sigma_2^2}{4} - \frac{1}{\theta}\right)\sqn{y^{k+1} - y^k}
	\\&-
	2\nu^{-1}\<y_g^k + z_g^k - (y^* + z^*), y^{k+1} - y^*>
	-
	2\<x^{k+1} - x^*, y^{k+1} - y^*>.
	\end{align*}
	Rearranging gives 
	\begin{align*}
	\MoveEqLeft[4]
	\left(\frac{1}{\theta} + \frac{\beta}{2}\right)\sqn{y^{k+1} - y^*}
	+
	\frac{\beta}{2\sigma_2}\sqn{y_f^{k+1} - y^*}\\
	&\leq
	\frac{1}{\theta}\sqn{y^k - y^*}
	+
	\frac{\beta(1-\sigma_2/2)}{2\sigma_2}\sqn{y_f^k - y^*}
	+
	\bg_F(x_g^k, x^*) - \frac{\nu}{2}\sqn{x_g^k - x^*}
	-
	2\<x^{k+1} - x^*, y^{k+1} - y^*>
	\\&-
	2\nu^{-1}\<y_g^k + z_g^k - (y^* + z^*), y^{k+1} - y^*>
	-
	\frac{\beta}{4}\sqn{y_g^k - y^*}
	+
	\left(\frac{\beta\sigma_2^2}{4} - \frac{1}{\theta}\right)\sqn{y^{k+1} - y^k}.
	\end{align*}
\end{proof}

\newpage

\begin{lemma}
%	Let $\pi$ be defined as follows:
%	\begin{equation}\label{scary:pi}
%	\pi = 1.
%	\end{equation}
	The following inequality holds:
	\begin{equation}\label{scary:eq:2}
	\sqn{m^k}_\mP
	\leq
	8\chi^2\gamma^2\nu^{-2}\sqn{y_g^k + z_g^k}_\mP + 4\chi(1 - (4\chi)^{-1})\sqn{m^k}_\mP - 4\chi\sqn{m^{k+1}}_\mP.
	\end{equation}
\end{lemma}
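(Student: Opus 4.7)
My plan is to reduce \eqref{scary:eq:2} to a one-step contraction. Dividing by $4\chi$ and rearranging, \eqref{scary:eq:2} is equivalent to the recursion
\begin{equation*}
\sqn{m^{k+1}}_\mP \leq \left(1 - \tfrac{1}{2\chi}\right)\sqn{m^k}_\mP + 2\chi\gamma^2\nu^{-2}\sqn{y_g^k + z_g^k}_\mP,
\end{equation*}
so I would aim to establish this bound directly and then undo the algebra.

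Setting $u^k \eqdef \gamma\nu^{-1}(y_g^k + z_g^k) + m^k$, Line~\ref{scary:line:m} of Algorithm~\ref{scary:alg} reads $m^{k+1} = u^k - (\mW(k)\otimes\mI_d) u^k$. Two structural consequences of the gossip-matrix assumptions from Section~\ref{sec:gossip} make $\mP m^{k+1}$ tractable: since $\range \mW(k) \subset \cL^\perp$, the vector $(\mW(k)\otimes\mI_d) u^k$ already lies in $\cL^\perp$, so $\mP$ acts as the identity on it; and since $\ker \mW(k) \supset \cL$, the matrix $\mW(k)\otimes\mI_d$ annihilates the $\cL$-component of every vector, giving $(\mW(k)\otimes\mI_d) u^k = (\mW(k)\otimes\mI_d)\,\mP u^k$. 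Combining these,
\begin{equation*}
\mP m^{k+1} = \mP u^k - (\mW(k)\otimes\mI_d)\,\mP u^k.
\end{equation*}
Because $\mP u^k \in \cL^\perp$, the contraction property \eqref{eq:chi} applied block-wise in the $d$ coordinates immediately yields $\sqn{m^{k+1}}_\mP \leq (1 - \chi^{-1})\sqn{u^k}_\mP$.

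To finish, I would expand $\mP u^k = \gamma\nu^{-1}\mP(y_g^k + z_g^k) + \mP m^k$ and apply Young's inequality $\sqn{a+b} \leq (1+\lambda)\sqn{b} + (1+\lambda^{-1})\sqn{a}$ with the tuned parameter $\lambda = 1/(2\chi - 2)$. A short computation then gives $(1-\chi^{-1})(1+\lambda) = 1 - (2\chi)^{-1}$ exactly, and $(1-\chi^{-1})(1+\lambda^{-1}) = (\chi-1)(2\chi-1)/\chi \leq 2\chi$, which produces the contraction recursion above; multiplying by $4\chi$ and rearranging recovers \eqref{scary:eq:2} in its stated form. The only mildly delicate part is calibrating $\lambda$ so the contraction factor comes out cleanly as $1 - (2\chi)^{-1}$; every other ingredient is either a structural identity from the range/kernel properties of $\mW(k)$ or a direct invocation of \eqref{eq:chi}, so I do not anticipate any further obstacle.
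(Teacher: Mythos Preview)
Your proposal is correct and follows essentially the same approach as the paper: both derive $\sqn{m^{k+1}}_\mP \leq (1-\chi^{-1})\sqn{m^k + \gamma\nu^{-1}(y_g^k+z_g^k)}_\mP$ from Line~\ref{scary:line:m} and property~\eqref{eq:chi}, then apply Young's inequality with the same parameter $c = 1/(2(\chi-1))$ to obtain the recursion $\sqn{m^{k+1}}_\mP \leq (1-(2\chi)^{-1})\sqn{m^k}_\mP + 2\chi\gamma^2\nu^{-2}\sqn{y_g^k+z_g^k}_\mP$, which rearranges to~\eqref{scary:eq:2}. Your added discussion of the range/kernel properties justifying $\mP m^{k+1} = \mP u^k - (\mW(k)\otimes\mI_d)\mP u^k$ is a bit more explicit than the paper's, but the argument is the same.
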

\begin{proof}
	Using Line~\ref{scary:line:m} of Algorithm~\ref{scary:alg} we get
	\begin{align*}
	\sqn{m^{k+1}}_\mP
	&=
	\sqn{\gamma\nu^{-1}(y_g^k+z_g^k) + m^k - (\mW(k)\otimes \mI_d)\left[\gamma\nu^{-1}(y_g^k+z_g^k) + m^k\right]}_{\mP}\\
	&=
	\sqn{\mP\left[\gamma\nu^{-1}(y_g^k+z_g^k) + m^k\right]- (\mW(k)\otimes \mI_d)\mP\left[\gamma\nu^{-1}(y_g^k+z_g^k) + m^k\right]}.
	\end{align*}
	Using  property \eqref{eq:chi} we obtain
	\begin{align*}
	\sqn{m^{k+1}}_\mP
	&\leq (1 - \chi^{-1})
	\sqn{m^k + \gamma\nu^{-1}(y_g^k + z_g^k)}_\mP.
	\end{align*}
	Using inequality $\sqn{a+b} \leq (1+c)\sqn{a} + (1+c^{-1})\sqn{b}$ with $c = \frac{1}{2(\chi - 1)}$ we get
	\begin{align*}
	\sqn{m^{k+1}}_\mP
	&\leq (1 - \chi^{-1})
	\left[
	\left(1 + \frac{1}{2(\chi - 1)}\right)\sqn{m^k}_\mP
	+ 
	\left(1 +  2(\chi - 1)\right)\gamma^2\nu^{-2}\sqn{y_g^k + z_g^k}_\mP
	\right]
	\\&\leq
	(1 - (2\chi)^{-1})\sqn{m^k}_\mP
	+
	2\chi\gamma^2\nu^{-2}\sqn{y_g^k + z_g^k}_\mP.
	\end{align*}
	Rearranging gives
	\begin{align*}
	\sqn{m^k}_\mP
	&\leq
	8\chi^2\gamma^2\nu^{-2}\sqn{y_g^k + z_g^k}_\mP + 4\chi(1 - (4\chi)^{-1})\sqn{m^k}_\mP - 4\chi\sqn{m^{k+1}}_\mP.
	\end{align*}
\end{proof}

\begin{lemma}
	Let $\z^k$ be defined as follows:
	\begin{equation}\label{scary:zhat}
	\z^k = z^k - \mP m^k.
	\end{equation}
	Then the following inequality holds:
	\begin{equation}
	\begin{split}\label{scary:eq:z}
	\MoveEqLeft[4]\frac{1}{\gamma}\sqn{\z^{k+1} - z^*}
	+
	\frac{4}{3\gamma}\sqn{m^{k+1}}_\mP
	\leq
	\left(\frac{1}{\gamma} - \delta\right)\sqn{\z^k - z^*}
	+
	\left(1-(4\chi)^{-1} +\frac{3\gamma\delta}{2}\right)\frac{4}{3\gamma}\sqn{m^k}_\mP
	\\&-
	2\nu^{-1}\<y_g^k + z_g^k - (y^*+z^*),z^k - z^*>
	+
	\gamma\nu^{-2}\left(1 + 6\chi\right)\sqn{y_g^k+z_g^k}_\mP
	\\&+
	2\delta\sqn{z_g^k  - z^*}
	+
	\left(2\gamma\delta^2-\delta\right)\sqn{z_g^k - z^k}.
	\end{split}
	\end{equation}
\end{lemma}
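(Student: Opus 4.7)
The plan is to first derive a clean update rule for $\z^k$ that is free of the gossip matrix, then expand the squared error $\sqn{\z^{k+1} - z^*}$, and finally combine with \eqref{scary:eq:2} to produce the telescope on $\sqn{m^k}_\mP$ and $\sqn{m^{k+1}}_\mP$. The first step is to compute $\z^{k+1} = z^{k+1} - \mP m^{k+1}$ directly from lines~\ref{scary:line:z:2} and~\ref{scary:line:m} of Algorithm~\ref{scary:alg}. Since $\range\mW(k)\subset\cL^\perp$, we have $\mP(\mW(k)\otimes\mI_d) = \mW(k)\otimes\mI_d$, so the gossip-matrix expressions cancel between $z^{k+1}$ and $\mP m^{k+1}$, yielding the compressor-free update
\begin{equation*}
\z^{k+1} = \z^k + \gamma\delta(z_g^k - z^k) - \gamma\nu^{-1}\mP(y_g^k + z_g^k).
\end{equation*}

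Next, I would expand $\sqn{\z^{k+1} - z^*}$ via this update. For the cross term $2\gamma\delta\<\z^k - z^*, z_g^k - z^k>$, I would apply the identity $2\<a,b>=\sqn{a+b}-\sqn{a}-\sqn{b}$ with $a+b = z_g^k - z^* - \mP m^k$, then use $\sqn{z_g^k-z^*-\mP m^k}\leq 2\sqn{z_g^k-z^*}+2\sqn{m^k}_\mP$; this furnishes the $-\gamma\delta\sqn{\z^k-z^*}$ and $-\gamma\delta\sqn{z_g^k - z^k}$ that eventually produce the $\bigl(\tfrac{1}{\gamma}-\delta\bigr)$ and $(2\gamma\delta^2-\delta)$ coefficients of the target. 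For the cross term $-2\gamma\nu^{-1}\<\z^k-z^*, \mP(y_g^k + z_g^k)>$, I substitute $\z^k - z^* = (z^k - z^*) - \mP m^k$; the optimality condition \eqref{opt:z} gives $\mP(y^*+z^*) = 0$, so the piece involving $z^k - z^*$ becomes exactly the target inner product $-2\nu^{-1}\<z^k-z^*, y_g^k + z_g^k - (y^*+z^*)>$ (after dividing through by $\gamma$), and the piece involving $\mP m^k$ is handled by Young's inequality. Finally, the squared term $\gamma\sqn{\delta(z_g^k - z^k) - \nu^{-1}\mP(y_g^k + z_g^k)}$ is expanded and its mixed inner product bounded by Young's, contributing $2\gamma\delta^2\sqn{z_g^k-z^k}$ (which combines with the earlier $-\delta\sqn{z_g^k-z^k}$ to give $(2\gamma\delta^2 - \delta)\sqn{z_g^k-z^k}$) together with further $\sqn{y_g^k + z_g^k}_\mP$ contributions.

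The closing step absorbs the accumulated $\sqn{m^k}_\mP$ terms. Multiplying \eqref{scary:eq:2} by $\tfrac{1}{3\gamma\chi}$ yields $\tfrac{4}{3\gamma}\sqn{m^{k+1}}_\mP \leq \tfrac{8\chi\gamma}{3\nu^2}\sqn{y_g^k + z_g^k}_\mP + \bigl(\tfrac{4}{3\gamma} - \tfrac{2}{3\gamma\chi}\bigr)\sqn{m^k}_\mP$. Adding this to the bound from the previous paragraph places $\tfrac{4}{3\gamma}\sqn{m^{k+1}}_\mP$ on the LHS and merges the $\sqn{m^k}_\mP$ contributions, and the free Young's parameters are then pinned down so that the total $\sqn{m^k}_\mP$ coefficient equals $\bigl(1 - (4\chi)^{-1} + \tfrac{3\gamma\delta}{2}\bigr)\tfrac{4}{3\gamma}$ and the total $\sqn{y_g^k + z_g^k}_\mP$ coefficient equals $\gamma\nu^{-2}(1 + 6\chi)$, as required.

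\textbf{Main obstacle.} The main delicacy is coefficient bookkeeping: each Young's inequality (on $\<\mP m^k, \mP(y_g^k + z_g^k)>$ and on the cross piece inside the squared term) has a free parameter, and these must be chosen simultaneously so that after adding the scaled \eqref{scary:eq:2}, the coefficients on $\sqn{m^k}_\mP$, $\sqn{y_g^k + z_g^k}_\mP$, $\sqn{z_g^k - z^k}$, and $\sqn{z_g^k - z^*}$ all land exactly on their prescribed values at once. The crucial trick making this work is the use of the exact three-term identity $2\<a,b>=\sqn{a+b}-\sqn{a}-\sqn{b}$ (rather than a Young's bound) on the first cross term, which produces a \emph{negative} $\sqn{z_g^k - z^k}$ contribution that is essential to cancel with the positive contribution from expanding the squared term and land on the sign-sensitive coefficient $(2\gamma\delta^2 - \delta)$.
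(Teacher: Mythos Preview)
Your proposal is correct and follows essentially the same route as the paper: derive the compressor-free update $\z^{k+1}-\z^k=\gamma\delta(z_g^k-z^k)-\gamma\nu^{-1}\mP(y_g^k+z_g^k)$, expand $\sqn{\z^{k+1}-z^*}$, apply the exact three-term identity to the $\delta$-cross term (this is indeed the key trick), split the $\nu^{-1}$-cross term via $\z^k-z^*=(z^k-z^*)-\mP m^k$ and the optimality condition \eqref{opt:z}, then close with Young's inequality on $2\nu^{-1}\<\mP(y_g^k+z_g^k),m^k>$ and the scaled estimate \eqref{scary:eq:2}. The only cosmetic difference is that the paper fixes the Young parameter as $3\gamma\chi$ up front rather than leaving it free until the end.
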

\begin{proof}
	\begin{align*}
	\frac{1}{\gamma}\sqn{\z^{k+1} - z^*}
	&=
	\frac{1}{\gamma}\sqn{\z^k - z^*}
	+
	\frac{2}{\gamma}\<\z^{k+1} - \z^k,\z^k - z^*>
	+
	\frac{1}{\gamma}\sqn{\z^{k+1} - \z^k}.
	\end{align*}
	Lines~\ref{scary:line:z:2} and~\ref{scary:line:m} of Algorithm~\ref{scary:alg} together with $\z^k$ definition \eqref{scary:zhat} imply
	\begin{equation*}
	\z^{k+1} - \z^k = \gamma\delta(z_g^k - z^k) - \gamma\nu^{-1}\mP(y_g^k + z_g^k).
	\end{equation*}
	Hence,
	\begin{align*}
	\frac{1}{\gamma}\sqn{\z^{k+1} - z^*}
	&=
	\frac{1}{\gamma}\sqn{\z^k - z^*}
	+
	2\delta\<z_g^k - z^k,\z^k - z^*>
	-
	2\nu^{-1}\<\mP(y_g^k + z_g^k),\z^k - z^*>
	+
	\frac{1}{\gamma}\sqn{\z^{k+1} - \z^k}
	\\&=
	\frac{1}{\gamma}\sqn{\z^k - z^*}
	+
	\delta\sqn{z_g^k - \mP m^k - z^*} - \delta\sqn{\z^k - z^*} - \delta\sqn{z_g^k - z^k}
	\\&-
	2\nu^{-1}\<\mP(y_g^k + z_g^k),\z^k - z^*>
	+
	\gamma\sqn{\delta(z_g^k - z^k) - \nu^{-1}\mP(y_g^k+z_g^k)}
	\\&\leq
	\left(\frac{1}{\gamma} - \delta\right)\sqn{\z^k - z^*}
	+
	2\delta\sqn{z_g^k  - z^*}
	+
	2\delta\sqn{m^k}_\mP
	-
	\delta\sqn{z_g^k - z^k}
	\\&-
	2\nu^{-1}\<\mP(y_g^k + z_g^k),\z^k - z^*>
	+
	2\gamma\delta^2\sqn{z_g^k - z^k}
	+
	\gamma\sqn{\nu^{-1}\mP(y_g^k+z_g^k)}
	\\&\leq
	\left(\frac{1}{\gamma} - \delta\right)\sqn{\z^k - z^*}
	+
	2\delta\sqn{z_g^k  - z^*}
	+
	\left(2\gamma\delta^2-\delta\right)\sqn{z_g^k - z^k}
	\\&-
	2\nu^{-1}\<\mP(y_g^k + z_g^k),z^k - z^*>
	+
	\gamma\sqn{\nu^{-1}\mP(y_g^k+z_g^k)}
	+
	2\delta\sqn{m^k}_\mP
	+
	2\nu^{-1}\<\mP(y_g^k + z_g^k),m^k>.
	\end{align*}
	Using the fact that $z^k \in \cL^\perp$ for all $k=0,1,2\ldots$ and optimality condition \eqref{opt:z} we get
	\begin{align*}
	\frac{1}{\gamma}\sqn{\z^{k+1} - z^*}
	&\leq
	\left(\frac{1}{\gamma} - \delta\right)\sqn{\z^k - z^*}
	+
	2\delta\sqn{z_g^k  - z^*}
	+
	\left(2\gamma\delta^2-\delta\right)\sqn{z_g^k - z^k}
	\\&-
	2\nu^{-1}\<y_g^k + z_g^k - (y^*+z^*),z^k - z^*>
	+
	\gamma\nu^{-2}\sqn{y_g^k+z_g^k}_\mP
	\\&+
	2\delta\sqn{m^k}_\mP
	+
	2\nu^{-1}\<\mP(y_g^k + z_g^k),m^k>.
	\end{align*}
	Using Young's inequality we get
	\begin{align*}
	\frac{1}{\gamma}\sqn{\z^{k+1} - z^*}
	&\leq
	\left(\frac{1}{\gamma} - \delta\right)\sqn{\z^k - z^*}
	+
	2\delta\sqn{z_g^k  - z^*}
	+
	\left(2\gamma\delta^2-\delta\right)\sqn{z_g^k - z^k}
	\\&-
	2\nu^{-1}\<y_g^k + z_g^k - (y^*+z^*),z^k - z^*>
	+
	\gamma\nu^{-2}\sqn{y_g^k+z_g^k}_\mP
	\\&+
	2\delta\sqn{m^k}_\mP
	+
	3\gamma\chi\nu^{-2}\sqn{y_g^k + z_g^k}_\mP + \frac{1}{3\gamma\chi}\sqn{m^k}_\mP.
	\end{align*}
	Using \eqref{scary:eq:2} we get
	\begin{align*}
	\frac{1}{\gamma}\sqn{\z^{k+1} - z^*}
	&\leq
	\left(\frac{1}{\gamma} - \delta\right)\sqn{\z^k - z^*}
	+
	2\delta\sqn{z_g^k  - z^*}
	+
	\left(2\gamma\delta^2-\delta\right)\sqn{z_g^k - z^k}
	\\&-
	2\nu^{-1}\<y_g^k + z_g^k - (y^*+z^*),z^k - z^*>
	+
	\gamma\nu^{-2}\sqn{y_g^k+z_g^k}_\mP
	\\&+
	2\delta\sqn{m^k}_\mP
	+
	6\gamma\nu^{-2}\chi\sqn{y_g^k + z_g^k}_\mP + \frac{4(1 - (4\chi)^{-1})}{3\gamma}\sqn{m^k}_\mP - \frac{4}{3\gamma}\sqn{m^{k+1}}_\mP
	\\&=
	\left(\frac{1}{\gamma} - \delta\right)\sqn{\z^k - z^*}
	+
	2\delta\sqn{z_g^k  - z^*}
	+
	\left(2\gamma\delta^2-\delta\right)\sqn{z_g^k - z^k}
	\\&-
	2\nu^{-1}\<y_g^k + z_g^k - (y^*+z^*),z^k - z^*>
	+
	\gamma\nu^{-2}\left(1 + 6\chi\right)\sqn{y_g^k+z_g^k}_\mP
	\\&+
	\left(1-(4\chi)^{-1}+\frac{3\gamma\delta}{2}\right)\frac{4}{3\gamma}\sqn{m^k}_\mP - \frac{4}{3\gamma}\sqn{m^{k+1}}_\mP.
	\end{align*}
\end{proof}

\begin{lemma}
	The following inequality holds:
	\begin{equation}
	\begin{split}\label{scary:eq:3}
	\MoveEqLeft[3]2\<y_g^k + z_g^k - (y^*+z^*),y^k + z^k - (y^*+ z^*)>
	\\&\geq
	2\sqn{y_g^k + z_g^k - (y^*+z^*)}
	+
	\frac{(1-\sigma_2/2)}{\sigma_2}\left(\sqn{y_g^k + z_g^k - (y^*+z^*)}  - \sqn{y_f^k + z_f^k - (y^*+z^*)}\right).
	\end{split}
	\end{equation}
\end{lemma}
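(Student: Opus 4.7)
The plan is to unfold the averaging identities from Lines~\ref{scary:line:y:1} and \ref{scary:line:z:1}, which together give
\begin{equation*}
y_g^k + z_g^k = \sigma_1(y^k + z^k) + (1-\sigma_1)(y_f^k + z_f^k).
\end{equation*}
Since $y^* + z^*$ is a fixed point of this convex combination, subtracting it from both sides yields, in the shorthand $u = y_g^k+z_g^k-(y^*+z^*)$, $v = y^k+z^k-(y^*+z^*)$, $w = y_f^k+z_f^k-(y^*+z^*)$, the affine relation $u = \sigma_1 v + (1-\sigma_1)w$. Solving for $v$ and taking the inner product with $2u$ gives
\begin{equation*}
2\<u,v> \;=\; \tfrac{2}{\sigma_1}\sqn{u} - \tfrac{2(1-\sigma_1)}{\sigma_1}\<u,w>.
\end{equation*}

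Next I would apply the elementary Young/polarization bound $-2\<u,w> \geq -\sqn{u}-\sqn{w}$ (equivalently $\sqn{u-w}\geq 0$) to the cross term, with the positive coefficient $(1-\sigma_1)/\sigma_1$. This yields
\begin{equation*}
2\<u,v\> \;\geq\; \tfrac{1+\sigma_1}{\sigma_1}\sqn{u} - \tfrac{1-\sigma_1}{\sigma_1}\sqn{w}.
\end{equation*}

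The final step is pure arithmetic using the choice \eqref{scary:sigma1}: since $\sigma_1^{-1} = \sigma_2^{-1}+\tfrac{1}{2}$, one has
\begin{equation*}
\tfrac{1-\sigma_1}{\sigma_1} \;=\; \tfrac{1}{\sigma_2}-\tfrac{1}{2} \;=\; \tfrac{1-\sigma_2/2}{\sigma_2},
\qquad \tfrac{1+\sigma_1}{\sigma_1} \;=\; 2 + \tfrac{1-\sigma_2/2}{\sigma_2}.
\end{equation*}
Substituting these two identities into the previous display turns it precisely into \eqref{scary:eq:3}. There is essentially no obstacle: the lemma is a structural consequence of the extrapolation rule, and the constant in \eqref{scary:sigma1} was engineered so that Young's inequality is applied with the exact coefficient needed for the two sides to match without any slack.
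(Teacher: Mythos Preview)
Your proof is correct and is essentially the same as the paper's: the paper writes $2\<u,v> = 2\sqn{u} + \frac{2(1-\sigma_1)}{\sigma_1}\<u,u-w>$ via the extrapolation identity and then uses the polarization identity $2\<u,u-w> = \sqn{u}+\sqn{u-w}-\sqn{w}\geq \sqn{u}-\sqn{w}$, which is exactly your Young-type bound $-2\<u,w>\geq -\sqn{u}-\sqn{w}$ rearranged. The final substitution via \eqref{scary:sigma1} is identical.
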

\begin{proof}
	\begin{align*}
	\MoveEqLeft[4]2\<y_g^k + z_g^k - (y^*+z^*),y^k + z^k - (y^*+ z^*)>
	\\&=
	2\sqn{y_g^k + z_g^k - (y^*+z^*)}
	+
	2\<y_g^k + z_g^k - (y^*+z^*),y^k + z^k - (y_g^k + z_g^k)>.
	\end{align*}
	Using Lines~\ref{scary:line:y:1} and~\ref{scary:line:z:1} of Algorithm~\ref{scary:alg} we get
	\begin{align*}
	\MoveEqLeft[4]2\<y_g^k + z_g^k - (y^*+z^*),y^k + z^k - (y^*+ z^*)>
	\\&=
	2\sqn{y_g^k + z_g^k - (y^*+z^*)}
	+
	\frac{2(1-\sigma_1)}{\sigma_1}\<y_g^k + z_g^k - (y^*+z^*), y_g^k + z_g^k - (y_f^k + z_f^k)>
	\\&=
	2\sqn{y_g^k + z_g^k - (y^*+z^*)}
	\\&+
	\frac{(1-\sigma_1)}{\sigma_1}\left(\sqn{y_g^k + z_g^k - (y^*+z^*)} + \sqn{y_g^k + z_g^k - (y_f^k + z_f^k)} - \sqn{y_f^k + z_f^k - (y^*+z^*)}\right)
	\\&\geq
	2\sqn{y_g^k + z_g^k - (y^*+z^*)}
	+
	\frac{(1-\sigma_1)}{\sigma_1}\left(\sqn{y_g^k + z_g^k - (y^*+z^*)}  - \sqn{y_f^k + z_f^k - (y^*+z^*)}\right).
	\end{align*}
	Using $\sigma_1$ definition \eqref{scary:sigma1} we get
	\begin{align*}
	\MoveEqLeft[4]2\<y_g^k + z_g^k - (y^*+z^*),y^k + z^k - (y^*+ z^*)>
	\\&\geq
	2\sqn{y_g^k + z_g^k - (y^*+z^*)}
	+
	\frac{(1-\sigma_2/2)}{\sigma_2}\left(\sqn{y_g^k + z_g^k - (y^*+z^*)}  - \sqn{y_f^k + z_f^k - (y^*+z^*)}\right).
	\end{align*}
\end{proof}

\begin{lemma}
	Let $\zeta$ be defined by
	\begin{equation}\label{scary:zeta}
	\zeta = 1/2.
	\end{equation}
	Then the following inequality holds:
	\begin{equation}
	\begin{split}\label{scary:eq:4}
	\MoveEqLeft[6]-2\<y^{k+1} - y^k,y_g^k + z_g^k - (y^*+z^*)>
	\\&\leq
	\frac{1}{\sigma_2}\sqn{y_g^k + z_g^k - (y^*+z^*)}
	-
	\frac{1}{\sigma_2}\sqn{y_f^{k+1} + z_f^{k+1} - (y^*+z^*)}
	\\&+
	2\sigma_2\sqn{y^{k+1} - y^k}
	-
	\frac{1}{2\sigma_2\chi}\sqn{y_g^k + z_g^k}_{\mP}.
	\end{split}
	\end{equation}
\end{lemma}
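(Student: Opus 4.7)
The plan is to combine Lines~\ref{scary:line:y:3} and~\ref{scary:line:z:3} of Algorithm~\ref{scary:alg} so as to track $y_f^{k+1} + z_f^{k+1}$ through one algorithmic step. Let $\Delta \eqdef y_g^k + z_g^k - (y^* + z^*)$, $v \eqdef y_g^k + z_g^k$, and $W \eqdef \mW(k) \otimes \mI_d$. Adding the two update lines gives
\begin{equation*}
y_f^{k+1} + z_f^{k+1} - (y^* + z^*) = \Delta + \sigma_2(y^{k+1} - y^k) - \zeta W v.
\end{equation*}
Squaring this identity and solving for the cross term $2\sigma_2\<y^{k+1} - y^k, \Delta>$ yields
\begin{align*}
-2\<y^{k+1} - y^k, \Delta> &= \tfrac{1}{\sigma_2}\sqn{\Delta} - \tfrac{1}{\sigma_2}\sqn{y_f^{k+1} + z_f^{k+1} - (y^* + z^*)} \\
&\quad + \tfrac{1}{\sigma_2}\sqn{\sigma_2(y^{k+1} - y^k) - \zeta W v} - \tfrac{2\zeta}{\sigma_2}\<\Delta, W v>.
\end{align*}
Applying the elementary inequality $\sqn{a - b} \leq 2\sqn{a} + 2\sqn{b}$ to the third term on the right produces the target contribution $2\sigma_2\sqn{y^{k+1} - y^k}$ together with a residual $\tfrac{2\zeta^2}{\sigma_2}\sqn{W v}$ to be merged with the inner-product term.

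To collapse the two remaining $W$-dependent contributions into a single negative multiple of $\sqn{v}_\mP$, I would exploit the gossip-matrix assumptions from Section~\ref{sec:gossip}. Since $\ker \mW(k) \supset \cL$ and $\range \mW(k) \subset \cL^\perp$, it also follows that $\ker \mW(k)^\top \supset \cL$. Tensoring with $\mI_d$ and using $y^* + z^* \in \cL$ (from \eqref{opt:z}) gives $W^\top(y^* + z^*) = 0$ and $W v = W \mP v$. Together with $v - \mP v \in \cL \perp W \mP v \in \cL^\perp$, these identities reduce the $W$-dependent piece to
\begin{equation*}
\tfrac{2\zeta^2}{\sigma_2}\sqn{W v} - \tfrac{2\zeta}{\sigma_2}\<\Delta, W v> = \tfrac{2\zeta}{\sigma_2}\bigl[\zeta\sqn{W \mP v} - \<\mP v, W \mP v>\bigr].
\end{equation*}

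Finally, the contraction property \eqref{eq:chi}, rearranged as $\sqn{W x} - 2\<x, W x> \leq -\chi^{-1}\sqn{x}$ for $x \in \cL^\perp$, applied to $x = \mP v$, together with the specific choice $\zeta = 1/2$ from \eqref{scary:zeta}, yields
\begin{equation*}
\tfrac{2\zeta}{\sigma_2}\bigl[\zeta\sqn{W \mP v} - \<\mP v, W \mP v>\bigr] = \tfrac{1}{2\sigma_2}\bigl[\sqn{W \mP v} - 2\<\mP v, W \mP v>\bigr] \leq -\tfrac{1}{2\sigma_2\chi}\sqn{v}_\mP,
\end{equation*}
which matches the last term of the claim. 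The main obstacle is the algebraic calibration: the factor $2$ from the Young's split and the value $\zeta = 1/2$ must both be present so that the residual quadratic form in $W$ reassembles into $\tfrac12(\sqn{Wx} - 2\<x, Wx>)$; any other scaling would prevent a direct invocation of \eqref{eq:chi} and would leave an unwanted positive remainder.
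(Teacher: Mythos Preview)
Your proof is correct and follows essentially the same route as the paper: combine Lines~\ref{scary:line:y:3} and~\ref{scary:line:z:3} to express $y_f^{k+1}+z_f^{k+1}-(y^*+z^*)$, expand the square, bound the mixed quadratic term via $\sqn{a-b}\le 2\sqn{a}+2\sqn{b}$, and then use the gossip-matrix properties together with \eqref{opt:z} and the contraction \eqref{eq:chi} at $\zeta=1/2$. The only cosmetic difference is that the paper substitutes $y_f^{k+1}-y_g^k$ and $z_f^{k+1}-z_g^k$ separately before applying the same Young split, and rewrites $-\langle Wv,v\rangle+\tfrac12\sqn{Wv}$ via a polarization identity before invoking \eqref{eq:chi}, whereas you first expand \eqref{eq:chi} into $\sqn{Wx}-2\langle x,Wx\rangle\le -\chi^{-1}\sqn{x}$; the resulting bounds and constants are identical.
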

\begin{proof}
	\begin{align*}
	\MoveEqLeft[4]\sqn{y_f^{k+1} + z_f^{k+1} - (y^*+z^*)}
	\\&=
	\sqn{y_g^k + z_g^k - (y^*+z^*)} + 2\<y_f^{k+1} + z_f^{k+1} - (y_g^k + z_g^k),y_g^k + z_g^k - (y^*+z^*)>
	\\&+
	\sqn{y_f^{k+1} + z_f^{k+1} - (y_g^k + z_g^k)}
	\\&\leq
	\sqn{y_g^k + z_g^k - (y^*+z^*)} + 2\<y_f^{k+1} + z_f^{k+1} - (y_g^k + z_g^k),y_g^k + z_g^k - (y^*+z^*)>
	\\&+
	2\sqn{y_f^{k+1} - y_g^k}
	+
	2\sqn{z_f^{k+1} - z_g^k}.
	\end{align*}
	Using Line~\ref{scary:line:y:3} of Algorithm~\ref{scary:alg} we get
	\begin{align*}
	\MoveEqLeft[4]\sqn{y_f^{k+1} + z_f^{k+1} - (y^*+z^*)}
	\\&\leq
	\sqn{y_g^k + z_g^k - (y^*+z^*)}
	+
	2\sigma_2\<y^{k+1} - y^k,y_g^k + z_g^k - (y^*+z^*)>
	+
	2\sigma_2^2\sqn{y^{k+1} - y^k}
	\\&+
	2\<z_f^{k+1} - z_g^k,y_g^k + z_g^k - (y^*+z^*)>
	+
	2\sqn{z_f^{k+1} - z_g^k}.
	\end{align*}
	Using Line~\ref{scary:line:z:3} of Algorithm~\ref{scary:alg} and optimality condition \eqref{opt:z} we get
	\begin{align*}
	\MoveEqLeft[4]\sqn{y_f^{k+1} + z_f^{k+1} - (y^*+z^*)}
	\\&\leq
	\sqn{y_g^k + z_g^k - (y^*+z^*)}
	+
	2\sigma_2\<y^{k+1} - y^k,y_g^k + z_g^k - (y^*+z^*)>
	+
	2\sigma_2^2\sqn{y^{k+1} - y^k}
	\\&-
	2\zeta\< (\mW(k)\otimes \mI_d)(y_g^k + z_g^k),y_g^k + z_g^k - (y^*+z^*)>
	+
	2\zeta^2\sqn{(\mW(k)\otimes \mI_d)(y_g^k + z_g^k)}
	\\&=
	\sqn{y_g^k + z_g^k - (y^*+z^*)}
	+
	2\sigma_2\<y^{k+1} - y^k,y_g^k + z_g^k - (y^*+z^*)>
	+
	2\sigma_2^2\sqn{y^{k+1} - y^k}
	\\&-
	2\zeta\< (\mW(k)\otimes \mI_d)(y_g^k + z_g^k),y_g^k + z_g^k>
	+
	2\zeta^2\sqn{(\mW(k)\otimes \mI_d)(y_g^k + z_g^k)}.
	\end{align*}
	Using $\zeta$ definition \eqref{scary:zeta} we get
	\begin{align*}
	\MoveEqLeft[4]\sqn{y_f^{k+1} + z_f^{k+1} - (y^*+z^*)}
	\\&\leq
	\sqn{y_g^k + z_g^k - (y^*+z^*)}
	+
	2\sigma_2\<y^{k+1} - y^k,y_g^k + z_g^k - (y^*+z^*)>
	+
	2\sigma_2^2\sqn{y^{k+1} - y^k}
	\\&-
	\< (\mW(k)\otimes \mI_d)(y_g^k + z_g^k),y_g^k + z_g^k>
	+
	\frac{1}{2}\sqn{(\mW(k)\otimes \mI_d)(y_g^k + z_g^k)}
	\\&=
	\sqn{y_g^k + z_g^k - (y^*+z^*)}
	+
	2\sigma_2\<y^{k+1} - y^k,y_g^k + z_g^k - (y^*+z^*)>
	+
	2\sigma_2^2\sqn{y^{k+1} - y^k}
	\\&-
	\frac{1}{2}\sqn{(\mW(k)\otimes \mI_d)(y_g^k + z_g^k)}
	-
	\frac{1}{2}\sqn{y_g^k + z_g^k}
	+
	\frac{1}{2}\sqn{(\mW(k)\otimes \mI_d)(y_g^k + z_g^k) - (y_g^k + z_g^k)}
	\\&+
	\frac{1}{2}\sqn{(\mW(k)\otimes \mI_d)(y_g^k + z_g^k)}
	\\&\leq
	\sqn{y_g^k + z_g^k - (y^*+z^*)}
	+
	2\sigma_2\<y^{k+1} - y^k,y_g^k + z_g^k - (y^*+z^*)>
	+
	2\sigma_2^2\sqn{y^{k+1} - y^k}
	\\&-
	\frac{1}{2}\sqn{y_g^k + z_g^k}_\mP
	+
	\frac{1}{2}\sqn{(\mW(k)\otimes \mI_d)(y_g^k + z_g^k) - (y_g^k + z_g^k)}_\mP.
	\\&=
	\sqn{y_g^k + z_g^k - (y^*+z^*)}
	+
	2\sigma_2\<y^{k+1} - y^k,y_g^k + z_g^k - (y^*+z^*)>
	+
	2\sigma_2^2\sqn{y^{k+1} - y^k}
	\\&-
	\frac{1}{2}\sqn{y_g^k + z_g^k}_\mP
	+
	\frac{1}{2}\sqn{(\mW(k)\otimes \mI_d)\mP(y_g^k + z_g^k) - \mP(y_g^k + z_g^k)}.
	\end{align*}
	Using condition \eqref{eq:chi} we get
	\begin{align*}
	\MoveEqLeft[4]\sqn{y_f^{k+1} + z_f^{k+1} - (y^*+z^*)}
	\\&\leq
	\sqn{y_g^k + z_g^k - (y^*+z^*)}
	+
	2\sigma_2\<y^{k+1} - y^k,y_g^k + z_g^k - (y^*+z^*)>
	+
	2\sigma_2^2\sqn{y^{k+1} - y^k}
	\\&-
	(2\chi)^{-1}\sqn{y_g^k + z_g^k}_\mP.
	\end{align*}
	Rearranging gives
	\begin{align*}
	\MoveEqLeft[6]-2\<y^{k+1} - y^k,y_g^k + z_g^k - (y^*+z^*)>
	\\&\leq
	\frac{1}{\sigma_2}\sqn{y_g^k + z_g^k - (y^*+z^*)}
	-
	\frac{1}{\sigma_2}\sqn{y_f^{k+1} + z_f^{k+1} - (y^*+z^*)}
	\\&+
	2\sigma_2\sqn{y^{k+1} - y^k}
	-
	\frac{1}{2\sigma_2\chi}\sqn{y_g^k + z_g^k}_{\mP}.
	\end{align*}
\end{proof} 

\begin{lemma}
	Let $\delta$ be defined as follows:
	\begin{equation}\label{scary:delta}
	\delta = \frac{1}{17L}.
	\end{equation}
	Let $\gamma$ be defined as follows:
	\begin{equation}\label{scary:gamma}
	\gamma = \frac{\nu}{14\sigma_2\chi^2}.
	\end{equation}
	Let $\theta$ be defined as follows:
	\begin{equation}\label{scary:theta}
	\theta = \frac{\nu}{4\sigma_2}.
	\end{equation}
	Let $\sigma_2$ be defined as follows:
	\begin{equation}\label{scary:sigma2}
	\sigma_2 = \frac{\sqrt{\mu}}{16\chi\sqrt{L}}.
	\end{equation}
	Let $\Psi_{yz}^k$ be the following Lyapunov function
	\begin{equation}
	\begin{split}\label{scary:Psi_yz}
	\Psi_{yz}^k &= \left(\frac{1}{\theta} + \frac{\beta}{2}\right)\sqn{y^{k} - y^*}
	+
	\frac{\beta}{2\sigma_2}\sqn{y_f^{k} - y^*}
	+
	\frac{1}{\gamma}\sqn{\z^{k} - z^*}
	\\&+
	\frac{4}{3\gamma}\sqn{m^{k}}_\mP
	+
	\frac{\nu^{-1}}{\sigma_2}\sqn{y_f^{k} + z_f^{k} - (y^*+z^*)}.
	\end{split}
	\end{equation}
	Then the following inequality holds:
	\begin{equation}\label{scary:eq:yz}
	\Psi_{yz}^{k+1} \leq \left(1 - \frac{\sqrt{\mu}}{32\chi\sqrt{L}}\right)\Psi_{yz}^k
	+
	\bg_F(x_g^k, x^*) - \frac{\nu}{2}\sqn{x_g^k - x^*}
	-
	2\<x^{k+1} - x^*, y^{k+1} - y^*>.
	\end{equation}
\end{lemma}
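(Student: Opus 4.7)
The plan is to obtain \eqref{scary:eq:yz} by summing \eqref{scary:eq:y} and \eqref{scary:eq:z}, disposing of the two leftover inner products involving $y_g^k+z_g^k-(y^*+z^*)$ via \eqref{scary:eq:3} and \eqref{scary:eq:4}, and then verifying that every remaining quadratic is non-positive under the stated choice of constants.

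First I would split the inner product on the RHS of \eqref{scary:eq:y} as
\[
-2\nu^{-1}\<y_g^k+z_g^k-(y^*+z^*),y^{k+1}-y^*>=-2\nu^{-1}\<y_g^k+z_g^k-(y^*+z^*),y^k-y^*>-2\nu^{-1}\<y_g^k+z_g^k-(y^*+z^*),y^{k+1}-y^k>.
\]
The first piece combines with the $-2\nu^{-1}\<y_g^k+z_g^k-(y^*+z^*),z^k-z^*>$ from \eqref{scary:eq:z} into a single inner product that is upper bounded by $-\nu^{-1}\cdot\eqref{scary:eq:3}$; the second is upper bounded by $\nu^{-1}\cdot\eqref{scary:eq:4}$. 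After summation the RHS picks up a negative surplus $-\tfrac{3\nu^{-1}}{2}\sqn{y_g^k+z_g^k-(y^*+z^*)}$ (the coefficients collapse via $-2\nu^{-1}-\tfrac{\nu^{-1}(1-\sigma_2/2)}{\sigma_2}+\tfrac{\nu^{-1}}{\sigma_2}=-\tfrac{3\nu^{-1}}{2}$), the Lyapunov-matching pair $+\tfrac{\nu^{-1}(1-\sigma_2/2)}{\sigma_2}\sqn{y_f^k+z_f^k-(y^*+z^*)}$ and $-\tfrac{\nu^{-1}}{\sigma_2}\sqn{y_f^{k+1}+z_f^{k+1}-(y^*+z^*)}$, the gossip penalty $-\tfrac{\nu^{-1}}{2\sigma_2\chi}\sqn{y_g^k+z_g^k}_\mP$, and the velocity term $+2\sigma_2\nu^{-1}\sqn{y^{k+1}-y^k}$.

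Next I would rearrange into the format of \eqref{scary:eq:yz} and verify the contraction factor $1-\sigma_2/2=1-\sqrt{\mu}/(32\chi\sqrt{L})$ on every Lyapunov slot: the $\sqn{y^k-y^*}$ slot contracts at rate $1/(1+\beta\theta/2)$, with $\beta=1/(2L)$ and $\theta=\nu/(4\sigma_2)$ yielding $\beta\theta/2=\chi/(2\sqrt\kappa)$, which dominates $(\sigma_2/2)/(1-\sigma_2/2)$ whenever $\chi\geq 1$; the $\sqn{y_f^k-y^*}$ slot contracts by $1-\sigma_2/2$ directly; the $\sqn{\z^k-z^*}$ slot contracts by $1-\gamma\delta$, and the choices $\gamma=\nu/(14\sigma_2\chi^2)$, $\delta=1/(17L)$ give $\gamma\delta=4/(119\chi\sqrt\kappa)\geq\sigma_2/2=1/(32\chi\sqrt\kappa)$; the $\sqn{m^k}_\mP$ slot contracts by $1-(4\chi)^{-1}+3\gamma\delta/2\leq 1-\sigma_2/2$ since $\sigma_2/2\leq 1/(32\chi)<(4\chi)^{-1}$ and $3\gamma\delta/2$ is negligible; and the $\sqn{y_f^k+z_f^k-(y^*+z^*)}$ slot contracts by $1-\sigma_2/2$ by construction. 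The gossip surplus $\gamma\nu^{-2}(1+6\chi)\sqn{y_g^k+z_g^k}_\mP$ from \eqref{scary:eq:z} is absorbed by the gossip penalty because $\gamma(1+6\chi)\leq\nu/(2\sigma_2\chi)$ reduces to $\chi\geq 1$, and the velocity surplus $+2\sigma_2\nu^{-1}\sqn{y^{k+1}-y^k}$ is absorbed by $(\tfrac{\beta\sigma_2^2}{4}-\tfrac{1}{\theta})\sqn{y^{k+1}-y^k}$ since $\sigma_2\leq 32\kappa$.

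The main obstacle is the residual $+2\delta\sqn{z_g^k-z^*}$ from \eqref{scary:eq:z}, which has no direct counterpart in $\Psi_{yz}^k$. I would dispose of it by writing $z_g^k-z^*=(y_g^k+z_g^k-(y^*+z^*))-(y_g^k-y^*)$ and applying Young's inequality with $\epsilon=16$, which gives
\[
2\delta\sqn{z_g^k-z^*}\leq 34\delta\sqn{y_g^k+z_g^k-(y^*+z^*)}+\tfrac{17\delta}{8}\sqn{y_g^k-y^*}=\tfrac{2}{L}\sqn{y_g^k+z_g^k-(y^*+z^*)}+\tfrac{\beta}{4}\sqn{y_g^k-y^*}.
\]
Since $2/L\leq 3/\mu=\tfrac{3\nu^{-1}}{2}$, the first summand is swallowed by the negative surplus from \eqref{scary:eq:3}/\eqref{scary:eq:4}, while the second cancels precisely the $-\tfrac{\beta}{4}\sqn{y_g^k-y^*}$ term explicitly retained on the RHS of \eqref{scary:eq:y}; this tight cancellation is exactly why the otherwise arbitrary constants are pinned to $\delta=1/(17L)$ and $\beta=1/(2L)$. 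The harmless $(2\gamma\delta^2-\delta)\sqn{z_g^k-z^k}$ term is non-positive under the stated constants and is discarded, and the leftover $\bg_F(x_g^k,x^*)-\tfrac{\nu}{2}\sqn{x_g^k-x^*}$ and $-2\<x^{k+1}-x^*,y^{k+1}-y^*>$ terms pass through unchanged onto the RHS of \eqref{scary:eq:yz}, ready to cancel against the matching contributions of \eqref{scary:eq:x} when the two Lyapunov functions are combined in the ensuing convergence proof.
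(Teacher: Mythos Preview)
Your proposal is correct and follows essentially the same route as the paper's proof: sum \eqref{scary:eq:y} and \eqref{scary:eq:z}, apply \eqref{scary:eq:3} and \eqref{scary:eq:4} to the two inner products, and then verify the contraction factors slot by slot under the stated parameter choices. In fact, your treatment of the residual $2\delta\sqn{z_g^k-z^*}$ via the decomposition $z_g^k-z^*=(y_g^k+z_g^k-(y^*+z^*))-(y_g^k-y^*)$ and Young's inequality with parameter $16$ is exactly the step the paper hides behind the sentence ``Using $\delta$ definition \eqref{scary:delta} we get''; the paper simply asserts that $2\delta\sqn{z_g^k-z^*}-\tfrac{1}{8L}\sqn{y_g^k-y^*}-\tfrac{3}{\mu}\sqn{y_g^k+z_g^k-(y^*+z^*)}\leq 0$ without displaying the splitting, whereas you spell it out and correctly identify it as the reason $\delta=1/(17L)$ is pinned.
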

\begin{proof}
	Combining \eqref{scary:eq:y} and \eqref{scary:eq:z} gives
	\begin{align*}
	\MoveEqLeft[4]\left(\frac{1}{\theta} + \frac{\beta}{2}\right)\sqn{y^{k+1} - y^*}
	+
	\frac{\beta}{2\sigma_2}\sqn{y_f^{k+1} - y^*}
	+
	\frac{1}{\gamma}\sqn{\z^{k+1} - z^*}
	+
	\frac{4}{3\gamma}\sqn{m^{k+1}}_\mP
	\\&\leq
	\left(\frac{1}{\gamma} - \delta\right)\sqn{\z^k - z^*}
	+
	\left(1-(4\chi)^{-1}+\frac{3\gamma\delta}{2}\right)\frac{4}{3\gamma}\sqn{m^k}_\mP
	+
	\frac{1}{\theta}\sqn{y^k - y^*}
	+
	\frac{\beta(1-\sigma_2/2)}{2\sigma_2}\sqn{y_f^k - y^*}
	\\&-
	2\nu^{-1}\<y_g^k + z_g^k - (y^*+z^*),y^k + z^k - (y^*+ z^*)>
	-
	2\nu^{-1}\<y_g^k + z_g^k - (y^* + z^*), y^{k+1} - y^k>
	\\&+
	\gamma\nu^{-2}\left(1 + 6\chi\right)\sqn{y_g^k+z_g^k}_\mP
	+
	\left(\frac{\beta\sigma_2^2}{4} - \frac{1}{\theta}\right)\sqn{y^{k+1} - y^k}
	+
	2\delta\sqn{z_g^k  - z^*}
	-
	\frac{\beta}{4}\sqn{y_g^k - y^*}
	\\&+
	\bg_F(x_g^k, x^*) - \frac{\nu}{2}\sqn{x_g^k - x^*}
	-
	2\<x^{k+1} - x^*, y^{k+1} - y^*>
	+\left(2\gamma\delta^2-\delta\right)\sqn{z_g^k - z^k}.
	\end{align*}
	Using \eqref{scary:eq:3} and \eqref{scary:eq:4} we get
	\begin{align*}
	\MoveEqLeft[4]\left(\frac{1}{\theta} + \frac{\beta}{2}\right)\sqn{y^{k+1} - y^*}
	+
	\frac{\beta}{2\sigma_2}\sqn{y_f^{k+1} - y^*}
	+
	\frac{1}{\gamma}\sqn{\z^{k+1} - z^*}
	+
	\frac{4}{3\gamma}\sqn{m^{k+1}}_\mP
	\\&\leq
	\left(\frac{1}{\gamma} - \delta\right)\sqn{\z^k - z^*}
	+
	\left(1- (4\chi)^{-1}+\frac{3\gamma\delta}{2}\right)\frac{4}{3\gamma}\sqn{m^k}_\mP
	+
	\frac{1}{\theta}\sqn{y^k - y^*}
	+
	\frac{\beta(1-\sigma_2/2)}{2\sigma_2}\sqn{y_f^k - y^*}
	\\&-
	2\nu^{-1}\sqn{y_g^k + z_g^k - (y^*+z^*)}
	+
	\frac{\nu^{-1}(1-\sigma_2/2)}{\sigma_2}\left(\sqn{y_f^k + z_f^k - (y^*+z^*)} - \sqn{y_g^k + z_g^k - (y^*+z^*)}\right)
	\\&+
	\frac{\nu^{-1}}{\sigma_2}\sqn{y_g^k + z_g^k - (y^*+z^*)}
	-
	\frac{\nu^{-1}}{\sigma_2}\sqn{y_f^{k+1} + z_f^{k+1} - (y^*+z^*)}
	+
	2\nu^{-1}\sigma_2\sqn{y^{k+1} - y^k}
	\\&-
	\frac{\nu^{-1}}{2\sigma_2\chi}\sqn{y_g^k + z_g^k}_{\mP}
	+
	\gamma\nu^{-2}\left(1 + 6\chi\right)\sqn{y_g^k+z_g^k}_\mP
	+
	\left(\frac{\beta\sigma_2^2}{4} - \frac{1}{\theta}\right)\sqn{y^{k+1} - y^k}
	+
	2\delta\sqn{z_g^k  - z^*}
	\\&-
	\frac{\beta}{4}\sqn{y_g^k - y^*}
	+
	\bg_F(x_g^k, x^*) - \frac{\nu}{2}\sqn{x_g^k - x^*}
	-
	2\<x^{k+1} - x^*, y^{k+1} - y^*>
	+\left(2\gamma\delta^2-\delta\right)\sqn{z_g^k - z^k}
	\\&=
	\left(\frac{1}{\gamma} - \delta\right)\sqn{\z^k - z^*}
	+
	\left(1-(4\chi)^{-1}+\frac{3\gamma\delta}{2}\right)\frac{4}{3\gamma}\sqn{m^k}_\mP
	+
	\frac{1}{\theta}\sqn{y^k - y^*}
	+
	\frac{\beta(1-\sigma_2/2)}{2\sigma_2}\sqn{y_f^k - y^*}
	\\&+
	\frac{\nu^{-1}(1-\sigma_2/2)}{\sigma_2}\sqn{y_f^k + z_f^k - (y^*+z^*)}
	-
	\frac{\nu^{-1}}{\sigma_2}\sqn{y_f^{k+1} + z_f^{k+1} - (y^*+z^*)}
	\\&+
	2\delta\sqn{z_g^k  - z^*}
	-
	\frac{\beta}{4}\sqn{y_g^k - y^*}
	+
	\nu^{-1}\left(\frac{1}{\sigma_2} - \frac{(1-\sigma_2/2)}{\sigma_2} - 2\right)\sqn{y_g^k + z_g^k - (y^*+z^*)}
	\\&+
	\left(\gamma\nu^{-2}\left(1 + 6\chi\right) - \frac{\nu^{-1}}{2\sigma_2\chi}\right)\sqn{y_g^k+z_g^k}_\mP
	+
	\left(\frac{\beta\sigma_2^2}{4} + 	2\nu^{-1}\sigma_2 - \frac{1}{\theta}\right)\sqn{y^{k+1} - y^k}
	\\&+
	\left(2\gamma\delta^2-\delta\right)\sqn{z_g^k - z^k}
	+
	\bg_F(x_g^k, x^*) - \frac{\nu}{2}\sqn{x_g^k - x^*}
	-
	2\<x^{k+1} - x^*, y^{k+1} - y^*>
	\\&=
	\left(\frac{1}{\gamma} - \delta\right)\sqn{\z^k - z^*}
	+
	\left(1-(4\chi)^{-1}+\frac{3\gamma\delta}{2}\right)\frac{4}{3\gamma}\sqn{m^k}_\mP
	+
	\frac{1}{\theta}\sqn{y^k - y^*}
	+
	\frac{\beta(1-\sigma_2/2)}{2\sigma_2}\sqn{y_f^k - y^*}
	\\&+
	\frac{\nu^{-1}(1-\sigma_2/2)}{\sigma_2}\sqn{y_f^k + z_f^k - (y^*+z^*)}
	-
	\frac{\nu^{-1}}{\sigma_2}\sqn{y_f^{k+1} + z_f^{k+1} - (y^*+z^*)}
	\\&+
	2\delta\sqn{z_g^k  - z^*}
	-
	\frac{\beta}{4}\sqn{y_g^k - y^*}
	-
	\frac{3\nu^{-1}}{2}\sqn{y_g^k + z_g^k - (y^*+z^*)}
	+
	\left(2\gamma\delta^2-\delta\right)\sqn{z_g^k - z^k}
	\\&+
	\left(\gamma\nu^{-2}\left(1 + 6\chi\right) - \frac{\nu^{-1}}{2\sigma_2\chi}\right)\sqn{y_g^k+z_g^k}_\mP
	+
	\left(\frac{\beta\sigma_2^2}{4} + 	2\nu^{-1}\sigma_2 - \frac{1}{\theta}\right)\sqn{y^{k+1} - y^k}
	\\&+
	\bg_F(x_g^k, x^*) - \frac{\nu}{2}\sqn{x_g^k - x^*}
	-
	2\<x^{k+1} - x^*, y^{k+1} - y^*>.
	\end{align*}
	Using $\beta$ definition \eqref{scary:beta} and $\nu$ definition \eqref{scary:nu} we get
	\begin{align*}
	\MoveEqLeft[4]\left(\frac{1}{\theta} + \frac{\beta}{2}\right)\sqn{y^{k+1} - y^*}
	+
	\frac{\beta}{2\sigma_2}\sqn{y_f^{k+1} - y^*}
	+
	\frac{1}{\gamma}\sqn{\z^{k+1} - z^*}
	+
	\frac{4}{3\gamma}\sqn{m^{k+1}}_\mP
	\\&\leq
	\left(\frac{1}{\gamma} - \delta\right)\sqn{\z^k - z^*}
	+
	\left(1-(4\chi)^{-1}+\frac{3\gamma\delta}{2}\right)\frac{4}{3\gamma}\sqn{m^k}_\mP
	+
	\frac{1}{\theta}\sqn{y^k - y^*}
	+
	\frac{\beta(1-\sigma_2/2)}{2\sigma_2}\sqn{y_f^k - y^*}
	\\&+
	\frac{\nu^{-1}(1-\sigma_2/2)}{\sigma_2}\sqn{y_f^k + z_f^k - (y^*+z^*)}
	-
	\frac{\nu^{-1}}{\sigma_2}\sqn{y_f^{k+1} + z_f^{k+1} - (y^*+z^*)}
	\\&+
	2\delta\sqn{z_g^k  - z^*}
	-
	\frac{1}{8L}\sqn{y_g^k - y^*}
	-
	\frac{3}{\mu}\sqn{y_g^k + z_g^k - (y^*+z^*)}
	+
	\left(2\gamma\delta^2-\delta\right)\sqn{z_g^k - z^k}
	\\&+
	\left(\gamma\nu^{-2}\left(1 + 6\chi\right) - \frac{\nu^{-1}}{2\sigma_2\chi}\right)\sqn{y_g^k+z_g^k}_\mP
	+
	\left(\frac{\beta\sigma_2^2}{4} + 	2\nu^{-1}\sigma_2 - \frac{1}{\theta}\right)\sqn{y^{k+1} - y^k}
	\\&+
	\bg_F(x_g^k, x^*) - \frac{\nu}{2}\sqn{x_g^k - x^*}
	-
	2\<x^{k+1} - x^*, y^{k+1} - y^*>.
	\end{align*}
	Using $\delta$ definition \eqref{scary:delta} we get
	\begin{align*}
	\MoveEqLeft[4]\left(\frac{1}{\theta} + \frac{\beta}{2}\right)\sqn{y^{k+1} - y^*}
	+
	\frac{\beta}{2\sigma_2}\sqn{y_f^{k+1} - y^*}
	+
	\frac{1}{\gamma}\sqn{\z^{k+1} - z^*}
	+
	\frac{4}{3\gamma}\sqn{m^{k+1}}_\mP
	\\&\leq
	\left(\frac{1}{\gamma} - \delta\right)\sqn{\z^k - z^*}
	+
	\left(1-(4\chi)^{-1}+\frac{3\gamma\delta}{2}\right)\frac{4}{3\gamma}\sqn{m^k}_\mP
	+
	\frac{1}{\theta}\sqn{y^k - y^*}
	+
	\frac{\beta(1-\sigma_2/2)}{2\sigma_2}\sqn{y_f^k - y^*}
	\\&+
	\frac{\nu^{-1}(1-\sigma_2/2)}{\sigma_2}\sqn{y_f^k + z_f^k - (y^*+z^*)}
	-
	\frac{\nu^{-1}}{\sigma_2}\sqn{y_f^{k+1} + z_f^{k+1} - (y^*+z^*)}
	\\&+
	\left(\gamma\nu^{-2}\left(1 + 6\chi\right) - \frac{\nu^{-1}}{2\sigma_2\chi}\right)\sqn{y_g^k+z_g^k}_\mP
	+
	\left(\frac{\beta\sigma_2^2}{4} + 	2\nu^{-1}\sigma_2 - \frac{1}{\theta}\right)\sqn{y^{k+1} - y^k}
	\\&+
	\left(2\gamma\delta^2-\delta\right)\sqn{z_g^k - z^k}
	+
	\bg_F(x_g^k, x^*) - \frac{\nu}{2}\sqn{x_g^k - x^*}
	-
	2\<x^{k+1} - x^*, y^{k+1} - y^*>.
	\end{align*}
	Using $\gamma$ definition \eqref{scary:gamma} we get
	\begin{align*}
	\MoveEqLeft[4]\left(\frac{1}{\theta} + \frac{\beta}{2}\right)\sqn{y^{k+1} - y^*}
	+
	\frac{\beta}{2\sigma_2}\sqn{y_f^{k+1} - y^*}
	+
	\frac{1}{\gamma}\sqn{\z^{k+1} - z^*}
	+
	\frac{4}{3\gamma}\sqn{m^{k+1}}_\mP
	\\&\leq
	\left(\frac{1}{\gamma} - \delta\right)\sqn{\z^k - z^*}
	+
	\left(1-(4\chi)^{-1}+\frac{3\gamma\delta}{2}\right)\frac{4}{3\gamma}\sqn{m^k}_\mP
	+
	\frac{1}{\theta}\sqn{y^k - y^*}
	+
	\frac{\beta(1-\sigma_2/2)}{2\sigma_2}\sqn{y_f^k - y^*}
	\\&+
	\frac{\nu^{-1}(1-\sigma_2/2)}{\sigma_2}\sqn{y_f^k + z_f^k - (y^*+z^*)}
	-
	\frac{\nu^{-1}}{\sigma_2}\sqn{y_f^{k+1} + z_f^{k+1} - (y^*+z^*)}
	\\&+
	\left(\frac{\beta\sigma_2^2}{4} + 	2\nu^{-1}\sigma_2 - \frac{1}{\theta}\right)\sqn{y^{k+1} - y^k}
	+
	\left(2\gamma\delta^2-\delta\right)\sqn{z_g^k - z^k}
	\\&+
	\bg_F(x_g^k, x^*) - \frac{\nu}{2}\sqn{x_g^k - x^*}
	-
	2\<x^{k+1} - x^*, y^{k+1} - y^*>.
	\end{align*}
	Using $\theta$ definition together with \eqref{scary:nu}, \eqref{scary:beta} and \eqref{scary:sigma2} gives
	\begin{align*}
	\MoveEqLeft[4]\left(\frac{1}{\theta} + \frac{\beta}{2}\right)\sqn{y^{k+1} - y^*}
	+
	\frac{\beta}{2\sigma_2}\sqn{y_f^{k+1} - y^*}
	+
	\frac{1}{\gamma}\sqn{\z^{k+1} - z^*}
	+
	\frac{4}{3\gamma}\sqn{m^{k+1}}_\mP
	\\&\leq
	\left(\frac{1}{\gamma} - \delta\right)\sqn{\z^k - z^*}
	+
	\left(1-(4\chi)^{-1}+\frac{3\gamma\delta}{2}\right)\frac{4}{3\gamma}\sqn{m^k}_\mP
	+
	\frac{1}{\theta}\sqn{y^k - y^*}
	+
	\frac{\beta(1-\sigma_2/2)}{2\sigma_2}\sqn{y_f^k - y^*}
	\\&+
	\frac{\nu^{-1}(1-\sigma_2/2)}{\sigma_2}\sqn{y_f^k + z_f^k - (y^*+z^*)}
	-
	\frac{\nu^{-1}}{\sigma_2}\sqn{y_f^{k+1} + z_f^{k+1} - (y^*+z^*)}
	\\&+
	\left(2\gamma\delta^2-\delta\right)\sqn{z_g^k - z^k}
	+
	\bg_F(x_g^k, x^*) - \frac{\nu}{2}\sqn{x_g^k - x^*}
	-
	2\<x^{k+1} - x^*, y^{k+1} - y^*>.
	\end{align*}
	Using $\gamma$ definition \eqref{scary:gamma} and $\delta$ definition \eqref{scary:delta} we get
	\begin{align*}
	\MoveEqLeft[4]\left(\frac{1}{\theta} + \frac{\beta}{2}\right)\sqn{y^{k+1} - y^*}
	+
	\frac{\beta}{2\sigma_2}\sqn{y_f^{k+1} - y^*}
	+
	\frac{1}{\gamma}\sqn{\z^{k+1} - z^*}
	+
	\frac{4}{3\gamma}\sqn{m^{k+1}}_\mP
	\\&\leq
	\left(\frac{1}{\gamma} - \delta\right)\sqn{\z^k - z^*}
	+
	\left(1-(8\chi)^{-1}\right)\frac{4}{3\gamma}\sqn{m^k}_\mP
	+
	\frac{1}{\theta}\sqn{y^k - y^*}
	+
	\frac{\beta(1-\sigma_2/2)}{2\sigma_2}\sqn{y_f^k - y^*}
	\\&+
	\frac{\nu^{-1}(1-\sigma_2/2)}{\sigma_2}\sqn{y_f^k + z_f^k - (y^*+z^*)}
	-
	\frac{\nu^{-1}}{\sigma_2}\sqn{y_f^{k+1} + z_f^{k+1} - (y^*+z^*)}
	\\&+
	\bg_F(x_g^k, x^*) - \frac{\nu}{2}\sqn{x_g^k - x^*}
	-
	2\<x^{k+1} - x^*, y^{k+1} - y^*>.
	\end{align*}
	After rearranging and using $\Psi_{yz}^k$ definition \eqref{scary:Psi_yz} we get
	\begin{align*}
	\Psi_{yz}^{k+1} 
	&\leq
	\max\left\{(1 + \theta\beta/2)^{-1}, (1-\gamma\delta), (1-\sigma_2/2), (1-(8\chi)^{-1})\right\}\Psi_{yz}^k
	\\&+
	\bg_F(x_g^k, x^*) - \frac{\nu}{2}\sqn{x_g^k - x^*}
	-
	2\<x^{k+1} - x^*, y^{k+1} - y^*>
	\\&\leq
	\left(1 - \frac{\sqrt{\mu}}{32\chi\sqrt{L}}\right)\Psi_{yz}^k
	+
	\bg_F(x_g^k, x^*) - \frac{\nu}{2}\sqn{x_g^k - x^*}
	-
	2\<x^{k+1} - x^*, y^{k+1} - y^*>.
	\end{align*}
\end{proof}

\newpage
\begin{proof}[Proof of Theorem~\ref{thm:adom+_conv}]
	Combining \eqref{scary:eq:x} and \eqref{scary:eq:yz} gives
	\begin{align*}
	\Psi_x^{k+1} + \Psi_{yz}^{k+1}
	&\leq
	\left(1 - \frac{\sqrt{\mu}}{\sqrt{\mu}+2\sqrt{L}}\right)\Psi_x^k
	+
	\left(1 - \frac{\lmin\sqrt{\mu}}{32\lmax\sqrt{L}}\right)\Psi_{yz}^k
	\\&\leq
	\left(1 - \frac{\lmin\sqrt{\mu}}{32\lmax\sqrt{L}}\right)(\Psi_x^k + \Psi_{yz}^k).
	\end{align*}
	This implies
	\begin{align*}
	\Psi_x^k + \Psi_{yz}^k \leq\left(1 - \frac{\lmin\sqrt{\mu}}{32\lmax\sqrt{L}}\right)^k(\Psi_x^0 + \Psi_{yz}^0).
	\end{align*}
	Using $\Psi_x^k$ definition \eqref{scary:Psi_x} we get
	\begin{align*}
	\sqn{x^k - x^*} &\leq \eta\Psi_x^k \leq \eta (\Psi_x^k + \Psi_{yz}^k ) \leq \left(1 - \frac{\lmin\sqrt{\mu}}{32\lmax\sqrt{L}}\right)^k\eta(\Psi_x^0 + \Psi_{yz}^0).
	\end{align*}
	Choosing $C = \eta(\Psi_x^0 + \Psi_{yz}^0)$ and using the number of iterations
	\begin{equation*}
	k = 32\chi\sqrt{\nicefrac{L}{\mu}}\log \frac{C}{\varepsilon} = \cO \left( \chi\sqrt{\nicefrac{L}{\mu}}\log \frac{1}{\epsilon}\right).
	\end{equation*}
	we get
	\begin{equation*}
	\sqn{x^k - x^*} \leq \epsilon,
	\end{equation*}
	which concludes the proof.
\end{proof}

\end{document}